\documentclass[11pt,reqno]{amsart}
\usepackage{amssymb,mathrsfs,graphicx,subfigure, enumerate}
\usepackage{amsmath,amsfonts,amssymb,amscd,amsthm,bbm}
\usepackage{graphicx,colortbl,here}
\numberwithin{figure}{section}
\usepackage{kotex}
\usepackage[utf8]{inputenc}
\usepackage[T1]{fontenc}
\usepackage{bm}

\def\d {{\partial}}

\def\indc {{\bf 1}}

\def \rpsi_i {|\psi_i \rangle}
\def \lpsi_i {\langle \psi_i|}
\def \lrpsi_i{\langle \psi_i | \psi_i \rangle}
\def \rpsi_k {|\psi_k \rangle}
\def \lpsi_k {\langle \psi_k|}
\def \lrpsi_k{\langle \psi_k | \psi_k \rangle}

\newcommand{\bbr}{\mathbb R}

\def\d{\mathrm{d}}

\newcommand{\dm}{d}

\newcommand{\ba}{\begin{aligned}}
\newcommand{\ea}{\end{aligned}}

\newcommand{\be}{\begin{equation}}
\newcommand{\ee}{\end{equation}}

\renewcommand{\d}{  {\textup{d}} }

\theoremstyle{plain}
\newtheorem{theorem}{Theorem}[section]
\newtheorem{lemma}{Lemma}[section]
\newtheorem{corollary}{Corollary}[section]
\newtheorem{proposition}{Proposition}[section]

\theoremstyle{definition}
\newtheorem{definition}{Definition}[section]

\theoremstyle{remark}
\newtheorem{remark}{Remark}[section]

\begin{document}

\title[Higher-order interaction and symmetry breaking]
{{Dimension-dependent symmetry breaking in a diffusive nonlocal interaction model}}

\author[D. Kim]{Dohyun Kim}
\address[D. Kim]{\newline Department of Mathematics Education and Institute of Pure and Applied Mathematics, \newline Sungkyunkwan University, Seoul 03063, Republic of Korea, \newline School of Computational Sciences, \newline Korea Institute for Advanced Study, Seoul 02455, Republic of Korea}
\email{dohyunkim@skku.edu}

\author[H. Park]{Hansol Park}
\address[H. Park]{\newline Department of Mathematics, \newline National Tsing Hua University, Hsinchu 30013, Taiwan}
\email{hansolpark@math.nthu.edu.tw}

\author[W. Shim]{Woojoo Shim}
\address[W. Shim]{\newline Department of Mathematics Education, \newline Kyungpook National University, Daegu 41566, Republic of Korea}
\email{wjshim@knu.ac.kr}

\thanks{\textbf{Acknowledgment.}
The work of D. Kim was supported by the National Research Foundation of Korea (NRF) grant funded by the Korean government (MSIT) (RS-2024-00454452) and by the Visiting Professorship at Korea Institute for Advanced Study, and the work of H. Park was supported by the National Science and Technology Council (NSTC), Taiwan (Grant No. NSTC 115-2115-M-007-001-MY3). The work of W. Shim was supported by the National Research Foundation of Korea (NRF) grant funded by the Korean government (MSIT) (RS-2026-25605588)}

\begin{abstract}
We study a diffusive nonlocal interaction equation generated by the squared
volumes of random $n$-simplices. The nonlinear drift depends on the solution
only through its mean and covariance matrix, which yields a closed
finite-dimensional covariance system and an explicit representation of the
measure-valued solution as an affine pushforward followed by Gaussian
convolution. We use this reduction to prove global well-posedness for arbitrary
initial data in $\mathcal P_2(\mathbb R^d)$. We  then classify the stationary states and show that they coincide with the
minimizers of the associated free energy. When $1\leq n<d$, the equilibrium is a unique isotropic Gaussian up to
translation. By contrast, at the critical dimension $n=d$,
the energy fixes only the determinant of the covariance matrix, and, when $d\geq2$, a continuum of anisotropic Gaussian equilibria appears. Finally, we prove
convergence of every solution to one of these equilibria in quadratic
Wasserstein distance and obtain explicit exponential convergence rates.
\end{abstract}

\keywords{nonlocal interaction, nonlinear Fokker--Planck equation, $n$-simplex,
covariance dynamics, Gaussian equilibrium, symmetry breaking}

\makeatletter
\@namedef{subjclassname@2020}{%
  \textup{2020} Mathematics Subject Classification}
\makeatother

\subjclass[2020]{35Q84, 49Q22, 35B40, 35A01, 35R06}


\date{\today}

\maketitle


\section{Introduction}
\setcounter{equation}{0}
Nonlinear Fokker--Planck equations with nonlocal interactions often admit a variational interpretation through free energies on Wasserstein space. This viewpoint provides a useful connection between diffusion, transport, and energy dissipation. Its development was initiated by the variational construction of the Fokker--Planck equation through the Jordan--Kinderlehrer--Otto scheme \cite{JKO} and was further developed through Otto's Riemannian calculus on Wasserstein space \cite{otto2001geometry} and the metric theory in \cite{AGS2005}. We refer to \cite{santambrogio2017euclidean} for an overview. Within this framework, an important class of nonlinear aggregation and Fokker--Planck equations combines the entropy with a pairwise interaction term of the form $\frac12\iint W(x-y)\,\d\rho(x)\,\d\rho(y)$. Such models have been studied with emphasis on measure-valued well-posedness, the geometry of minimizers, and convergence to equilibrium \cite{balague2013dimensionality,bolley2012uniform,carrillo2011global,carrillo2003kinetic,carrillo2006contractions}. The same competition between diffusion and interaction can also produce multiple equilibria and phase transitions, as shown for McKean--Vlasov-type equations in \cite{carrillo2020phase}.

This pairwise framework suggests asking how the dynamics change when the interaction is generated by several sampled points and encodes geometric dispersion rather than only pairwise separation. We address this question through simplex volumes. For $1\leq n\leq d$ and $x_0,\ldots,x_n\in\bbr^d$, define the edge matrix and the corresponding simplex volume by
\[
\begin{aligned}
B(x_0,\dots,x_n)
&:=
\begin{bmatrix}
 x_1-x_0 & \cdots & x_n-x_0
\end{bmatrix}
\in\bbr^{d\times n},\\
\mathrm{Vol}_n(x_0,\dots,x_n)
&:=\frac{1}{n!}\sqrt{\det(B^\top B)}.
\end{aligned}
\]
For $\rho\in\mathcal P_2(\bbr^d)$, the extended generalized variance is defined by
\begin{equation}
\label{eq:vn}
\mathcal V_n[\rho]
:=
\int_{(\mathbb R^d)^{n+1}}
\mathrm{Vol}_n(x_0,\dots,x_n)^2
\,\d\rho^{\otimes(n+1)}(x_0,\dots,x_n).
\end{equation}
Thus $\mathcal V_n[\rho]$ is the expected squared volume of an $n$-simplex whose vertices are independently sampled from $\rho$. This functional belongs to the family of dispersion measures introduced and studied in \cite{pronzato2017extended}, with related developments in \cite{pronzato2018simplicial,pronzato2019bregman,WZ}. Within this family, $n=1$ gives the mean squared pairwise distance, while $n=d$ gives, up to a constant, the classical generalized variance \cite{wilks1932certain}. For $1<n<d$, the intermediate functionals measure the combined dispersion over all $n$-dimensional covariance directions. Accordingly, the family contains the pairwise case and extends it to genuinely multipoint interactions when $n\geq2$.

We add diffusion to the dynamics generated by this family of simplex interactions and consider
\begin{equation}
\label{PDE}
 \partial_t\rho_t+\nabla\cdot(\rho_t v_t)=\gamma\Delta\rho_t,
 \qquad
 v_t(x):=v[\rho_t](x),
 \qquad
 v[\rho]:=-\nabla\frac{\delta\mathcal V_n}{\delta\rho},
\end{equation}
where $\gamma>0$ is the diffusion coefficient. For $\rho\in\mathcal P_2(\bbr^d)$, we define the entropy functional by
\[
\operatorname{Ent}(\rho)
:=
\begin{cases}
\displaystyle
\vspace{0.3cm} \int_{\bbr^\dm}f(x)\log f(x)\,\d x,
& \rho\ll\mathcal L^\dm
  \text{ with } f=\dfrac{\d\rho}{\d\mathcal L^\dm},\\[1.2ex]
+\infty,
& \rho\not\ll\mathcal L^\dm,
\end{cases}
\]
where $\mathcal L^\dm$ denotes Lebesgue measure and $0\log0:=0$. Thus $\rho$ always denotes a probability measure, whereas $f$ denotes its density when one exists. The associated free energy is defined by
\begin{equation}
\label{eqn:energy}
\mathcal E[\rho]
:=\gamma\operatorname{Ent}(\rho)+\mathcal V_n[\rho],
\qquad
\rho\in\mathcal P_2(\bbr^\dm).
\end{equation}
We refer to $\mathcal V_n$ as the simplex-volume interaction energy, which forms the interaction part of $\mathcal E$. For a smooth positive density $f$, the entropy has first variation $1+\log f$. Hence the formal Wasserstein velocity associated with $\mathcal E$ is $v[\rho]-\gamma\nabla\log f$, and the corresponding continuity equation is precisely \eqref{PDE}. This is the standard formal gradient-flow interpretation of the Fokker--Planck equation in Wasserstein space \cite{AGS2005,JKO,otto2001geometry,santambrogio2017euclidean}. We use this structure only as a variational motivation, while the analysis below is carried out directly at the level of weak solutions and the closed covariance dynamics.

The nondiffusive counterpart of \eqref{PDE}, corresponding to $\gamma=0$, was studied in \cite{kim2026finite}. A key structure identified there is the finite-dimensional closure of the dynamics through the mean and covariance matrix, which leads to rank collapse together with a transition between exponential and algebraic $W_2$-decay. The same closure survives in the present diffusive model, but diffusion changes the reduced dynamics in an essential way. At the covariance level, it contributes the positive forcing $2\gamma I_d$, which makes the covariance positive definite for every positive time and prevents the rank-collapse mechanism of the nondiffusive flow. At the level of the full measure, the pure affine pushforward in \cite{kim2026finite} is replaced by an affine pushforward followed by Gaussian convolution. These changes preserve the finite-dimensional structure of the nondiffusive model while introducing an additional regularizing mechanism, leading to our first main result.

\begin{theorem}[Global well-posedness and finite-dimensional representation]
\label{thm:main-wellposedness}
Let $\rho_0\in\mathcal P_2(\bbr^d)$ and $\gamma>0$. Then, equation \eqref{PDE} admits a unique global measure-valued weak solution. In addition, its mean is conserved, while its covariance matrix evolves according to a closed autonomous finite-dimensional system and becomes positive definite for every $t>0$. Once the covariance curve is determined, the full solution is recovered from the initial measure by an affine pushforward followed by convolution with a nondegenerate Gaussian measure. In particular, $\rho_t$ has a smooth strictly positive density for every $t>0$.
\end{theorem}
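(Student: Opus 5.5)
The plan is to reduce \eqref{PDE} to a linear Fokker--Planck equation with a time-dependent affine drift, whose coefficients depend only on the mean $m$ and covariance $\Sigma$ of the solution. The first step is the algebraic identity for the simplex-volume energy. Writing $\Sigma[\rho]$ for the covariance, a Cauchy--Binet expansion of $\det(B^\top B)$ together with independence of the vertices should give
\[
\mathcal V_n[\rho]=c_{n}\,e_n(\Sigma[\rho]),\qquad c_n=\frac{n+1}{n!},
\]
where $e_n$ is the $n$-th elementary symmetric polynomial of the eigenvalues, i.e.\ the sum of the $n\times n$ principal minors of $\Sigma$. Differentiating in $\rho$, and using that $\Sigma$ is a quadratic functional of $\rho$, should yield
\[
\frac{\delta\mathcal V_n}{\delta\rho}(x)=c_n\,(x-m)^\top D e_n(\Sigma)(x-m)+\text{const}.
\]
Hence
\[
v[\rho](x)=-K(\Sigma)(x-m),\qquad K(\Sigma):=2c_n\,D e_n(\Sigma).
\]
Here $De_n(\Sigma)$ is the symmetric matrix polynomial $\sum_{k=0}^{n-1}(-1)^k e_{n-1-k}(\Sigma)\Sigma^k$, which is positive semidefinite whenever $\Sigma\succeq0$. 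I would verify this via the eigenbasis, where it is diagonal with entries $e_{n-1}$ of the remaining eigenvalues.

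The second step is to test the equation with $x$ and with $(x-m)(x-m)^\top$. This shows $\dot m=0$ and
\[
\dot\Sigma=-K(\Sigma)\Sigma-\Sigma K(\Sigma)+2\gamma I_d .
\]
This is a closed autonomous ODE with polynomial, hence locally Lipschitz, right-hand side, so it has a unique local solution. The cone $\{\Sigma\succeq0\}$ is forward invariant. Indeed, if $\Sigma\succeq0$ has a unit null vector $u$, then $u^\top\dot\Sigma u=2\gamma>0$; in fact this shows $\Sigma_t\succ0$ for $t>0$. Since $K\Sigma$ has nonnegative trace (both factors commute and are PSD), we get $\frac{d}{dt}\operatorname{tr}\Sigma\le 2\gamma d$. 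This bounds $\Sigma_t$ on compact intervals, so the solution is global.

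With $\Sigma_t$ fixed, let $\Phi(t)$ solve $\dot\Phi=-K(\Sigma_t)\Phi$ with $\Phi(0)=I_d$, and set
\[
G_t:=2\gamma\int_0^t\Phi(t)\Phi(s)^{-1}\Phi(s)^{-\top}\Phi(t)^\top\,ds\succ0 .
\]
The linear SDE $dX=-K(\Sigma_t)(X-m)\,dt+\sqrt{2\gamma}\,dB$ with $X_0\sim\rho_0$ then gives the candidate
\[
\rho_t=\bigl(x\mapsto m+\Phi(t)(x-m)\bigr)_\#\rho_0 * \mathcal N(0,G_t).
\]
A direct computation, for instance via characteristic functions, shows that this is a weak solution of the linear equation. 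Its covariance is $\Phi\Sigma_0\Phi^\top+G_t$, which satisfies the same ODE as $\Sigma_t$, so by ODE uniqueness it equals $\Sigma_t$. Consequently $\rho_t$ solves the nonlinear equation \eqref{PDE}. Because $G_t\succ0$ for $t>0$, convolution with $\mathcal N(0,G_t)$ gives a smooth, strictly positive density.

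For uniqueness, take any weak solution in $C([0,\infty);\mathcal P_2)$. The moment identities in the second step apply to it, so its $(m,\Sigma)$ solves the same ODE and therefore coincides with ours. Its drift is then the same known affine field, and uniqueness for the linear Fokker--Planck equation with affine drift (via the Fourier transform, which turns it into a first-order linear transport equation for $\hat\rho_t$, or via duality) concludes the argument.

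I expect the main obstacle to be the rigorous justification of testing with the unbounded functions $x$ and $xx^\top$ for a general $\mathcal P_2$ weak solution. I would handle it with truncated test functions $\varphi_R(x)=\chi(x/R)\,x_ix_j$ and pass to the limit $R\to\infty$. The drift grows only linearly and $\Sigma$ is continuous in time, so dominated convergence should apply, using uniform-in-time second moments on compact intervals, which is part of the solution concept.
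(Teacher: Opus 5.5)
Your proposal is correct and follows essentially the same route as the paper: cutoff-justified moment closure, global solvability of the covariance ODE, the representation $\rho_t=\bigl(m_0+\Phi(t,0)(\cdot-m_0)\bigr)_\#\rho_0*\mathcal N(0,Q_t)$ (your $G_t$ is exactly $Q_t$), identification of the covariance by uniqueness for the linear matrix ODE $\dot X=-K(\Sigma_t)X-XK(\Sigma_t)+2\gamma I_d$ with frozen coefficient, and uniqueness via the moment identities plus duality for the resulting linear Fokker--Planck equation. The differences are only in implementation: you prove cone invariance and the trace bound at the matrix level and check the weak formulation via characteristic functions, whereas the paper uses the eigenvalue ODE with variation of constants and It\^o's formula, and the latter also gives the $W_2$-continuity of the constructed curve, which you should not omit.
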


Theorem~\ref{thm:main-wellposedness} shows that the nonlinear measure-valued problem remains governed by the covariance dynamics even for singular initial data. The Gaussian convolution regularizes the solution immediately, and therefore no absolute continuity assumption is needed at the initial time.

The equilibrium structure is likewise altered by diffusion. Our second main result describes this effect together with the long-time selection of equilibria.

\begin{theorem}[Equilibria and long-time behavior]
\label{thm:main-longtime}
The stationary weak solutions of \eqref{PDE} coincide with the global minimizers of $\mathcal E$, which can be classified as follows.
\begin{enumerate}
\item If $1\leq n<d$, every equilibrium is of the form $\mathcal N(m,\lambda_\ast I_d)$ with $m\in\bbr^d$, where
\[
\lambda_\ast
=\left(
\frac{\gamma n!}{2(n+1)\binom{d-1}{n-1}}
\right)^{1/n}.
\]
\item If $n=d$, every equilibrium is of the form $\mathcal N(m,\Sigma)$ with $m\in\bbr^d$ and $\Sigma$ positive definite satisfying
\[
\det\Sigma=\frac{\gamma d!}{2(d+1)}.
\]
\end{enumerate}
For every $\rho_0\in\mathcal P_2(\bbr^d)$ with mean $m_0$ and covariance $\Sigma_0$ (see Section \ref{sec:preliminaries} for the definition of mean and covariance), the solution from Theorem~\ref{thm:main-wellposedness} converges in $W_2$ to one of these equilibria with mean $m_0$. If $1\leq n<d$, the limiting covariance is $\lambda_\ast I_d$. If $n=d$, it is $\Sigma_0+\overline\mu I_d$, where $\overline\mu$ is the unique number for which $\Sigma_0+\overline\mu I_d$ is positive definite and
\[
\det(\Sigma_0+\overline\mu I_d)=\frac{\gamma d!}{2(d+1)}.
\]
Denoting the limiting equilibrium by $\rho_\infty$, there exist constants $c,C>0$ such that
\[
W_2(\rho_t,\rho_\infty)\leq Ce^{-ct}
\qquad
\text{for every }t\geq0.
\]
\end{theorem}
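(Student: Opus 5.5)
The plan is to reduce everything to the mean--covariance system and then work with finite-dimensional ODEs. The first step is a moment computation. For i.i.d.\ vertices with covariance $\Sigma$, expanding $\det(B^\top B)$ by Cauchy--Binet over $n$-minors gives $\mathcal V_n[\rho]=c_n\,e_n(\Sigma)$, where $c_n=(n+1)/n!$ and $e_n$ is the $n$-th elementary symmetric function of the eigenvalues. As a check, $n=1$ gives $2\operatorname{tr}\Sigma$. Consequently $\frac{\delta\mathcal V_n}{\delta\rho}(x)$ is a quadratic polynomial in $x-m$, and $v[\rho](x)=-A(\Sigma)(x-m)$ with $A(\Sigma)=2c_n\,De_n(\Sigma)$. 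Here $De_n(\Sigma)$ is a polynomial in $\Sigma$; it is symmetric, positive semidefinite and commutes with $\Sigma$. The mean is conserved, and the covariance satisfies
\[
\dot\Sigma=-A(\Sigma)\Sigma-\Sigma A(\Sigma)+2\gamma I_d=-2A(\Sigma)\Sigma+2\gamma I_d .
\]
I take this closed system and the representation
\[
\rho_t=\bigl(x\mapsto m_0+M_t(x-m_0)\bigr)_\#\rho_0 * \mathcal N(0,G_t),
\qquad
\dot M_t=-A(\Sigma_t)M_t,
\]
with $G_t$ the associated Gramian, from Theorem~\ref{thm:main-wellposedness}. The estimate $\Sigma_t\ge 2\gamma t\,I_d$ for small $t$ gives positive definiteness for all $t>0$.

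The second step is to classify equilibria and identify them with minimizers. A stationary weak solution has $\dot\Sigma=0$, so it solves the linear Fokker--Planck equation with Ornstein--Uhlenbeck drift $-A(\Sigma)(x-m)$. Its unique invariant law is $\mathcal N(m,\gamma A(\Sigma)^{-1})$, and consistency forces $A(\Sigma)\Sigma=\gamma I_d$. For minimizers, at fixed $(m,\Sigma)$ the entropy is minimized uniquely by the Gaussian, while $\mathcal V_n$ depends only on $\Sigma$. This reduces the problem to minimizing
\[
-\tfrac{\gamma}{2}\log\det\Sigma+c_n e_n(\Sigma)
\]
over positive definite $\Sigma$, whose critical points are exactly the solutions of $A(\Sigma)\Sigma=\gamma I_d$.
\begin{itemize}
\item For $n=d$, $\lambda_i\partial_ie_d=\det\Sigma$, so the condition is $2c_d\det\Sigma=\gamma$. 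This gives $\det\Sigma=\gamma d!/(2(d+1))$, and the function $-\frac{\gamma}{2}\log D+c_dD$ of $D=\det\Sigma$ is minimized precisely there.
\item For $n<d$, I pass to log-eigenvalues $s_i=\log\lambda_i$. Then $-\frac{\gamma}{2}\sum_i s_i$ is linear and $e_n(e^{s})$ is strictly convex for $n<d$ (a sum of exponentials of linear forms spanning $\bbr^d$). The unique critical point is therefore symmetric, $\lambda_\ast I_d$. Plugging into $\binom{d-1}{n-1}\lambda^{n}\cdot 2c_n=\gamma$ yields the stated $\lambda_\ast$.
\end{itemize}

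The third step is convergence at the covariance level. Since $A$ commutes with $\Sigma$, I work in eigenvalues and use $\lambda_i\partial_ie_n=e_n-e_n^{(i)}$, where $e_n^{(i)}$ omits $\lambda_i$.
\begin{itemize}
\item For $n=d$, every $\dot\lambda_i$ equals $2\gamma-4c_d\det\Sigma$. Hence $\dot\Sigma$ is a multiple of $I_d$, and $\Sigma_t=\Sigma_0+\mu(t)I_d$. The function $\mu$ solves a scalar ODE whose right-hand side is strictly decreasing in $\mu$ on the admissible range, so $\mu(t)\to\overline\mu$ exponentially.
\item For $n<d$, differences obey $\frac{d}{dt}(\lambda_i-\lambda_j)=-4c_n\,e_{n-1}^{(i,j)}(\lambda_i-\lambda_j)$, where $e_{n-1}^{(i,j)}$ omits both $\lambda_i$ and $\lambda_j$. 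For $t\ge1$ the coefficient is bounded below, since all eigenvalues are bounded below, and above by an energy/trace bound. Anisotropy therefore decays exponentially, and the remaining scalar ODE for the common part converges exponentially to $\lambda_\ast$.
\end{itemize}

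The last step is the $W_2$ estimate, via the coupling $X_t=m_0+M_t(X_0-m_0)+Z_t$ with $Z_t\sim\mathcal N(0,G_t)$ independent of $X_0$:
\[
W_2(\rho_t,\rho_\infty)\le \|M_t\|\,\operatorname{tr}(\Sigma_0)^{1/2}+W_2\bigl(\mathcal N(0,G_t),\mathcal N(0,\Sigma_\infty)\bigr).
\]
Since $A(\Sigma_t)\to A(\Sigma_\infty)=\gamma\Sigma_\infty^{-1}>0$ exponentially, $\|M_t\|$ decays exponentially. In addition, $G_t-M_t\Sigma_0M_t^\top=\Sigma_t$ converges exponentially, and the Gaussian $W_2$ distance is controlled by $\|G_t-\Sigma_\infty\|$ once $\Sigma_\infty>0$. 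The main obstacle is the uniform two-sided control of eigenvalues needed to make the rates exponential, especially near $t=0$ for singular $\rho_0$ and for $n<d$ where $e_{n-1}^{(i,j)}$ must stay bounded below. I would handle it by starting the argument at $t=1$, using $\Sigma_1\ge c I_d$, and using monotonicity of the reduced energy along the ODE for an upper bound.
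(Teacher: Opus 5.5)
Your proposal is correct. Its architecture matches the paper's: moment closure, Gaussian reduction at fixed moments, stationary states via the Ornstein--Uhlenbeck representation, eigenvalue ODEs, and an explicit coupling. Two key lemmas differ.

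For $n<d$, the paper treats minimizers and stationary states separately. It classifies minimizers with Maclaurin's inequality plus convexity of $r\mapsto H(e^r)$, and stationary states through $(\lambda_i-\lambda_j)e_{n-1}(\hat\lambda_i,\hat\lambda_j)=0$. Your strict convexity of $s\mapsto-\frac{\gamma}{2}\sum_i s_i+c_n e_n(e^{s_1},\ldots,e^{s_d})$ handles both at once. It is valid because the vectors $\mathbf 1_S$ with $|S|=n$ span $\mathbb R^d$ when $n<d$, and the stationary relation $2c_n\lambda_i e_{n-1}(\hat\lambda_i)=\gamma$ is exactly $\nabla_s=0$. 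The paper's route gives in exchange the volume-preserving balancing interpretation.

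For the uniform eigenvalue bounds, the paper uses comparison ODEs for $\lambda_d$ and monotone gaps. Your Lyapunov idea also works. With $a_i:=2c_ne_{n-1}(\hat\lambda_i)$ and $E(\Sigma):=-\frac{\gamma}{2}\log\det\Sigma+c_ne_n(\Sigma)$, one computes $\frac{\d}{\d t}E(\Sigma_t)=-\sum_i(a_i\lambda_i-\gamma)^2/\lambda_i\le0$. The same spanning property makes $E$ coercive in $s=\log\lambda$. Hence the sublevel set at $E(\Sigma_1)$ gives both the lower and the upper bound for $t\geq1$; you only need $\Sigma_1>0$ to make $E(\Sigma_1)$ finite.

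Your convolution-contraction bound for $W_2$ is the paper's coupling estimate in slightly weaker form. The paper's bookkeeping with $q_i$ additionally yields the explicit rates $\beta_*$, $\alpha_{\mathrm{cov}}$ and the Gelbrich lower bound, which the theorem as stated does not need.

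A few slips to fix:
\begin{itemize}
\item $\Sigma_t\ge2\gamma tI_d$ is false. For $n=1$ and $\rho_0=\delta_{m_0}$ one gets $\lambda(t)=\frac{\gamma}{4}(1-e^{-8t})<2\gamma t$. Positivity is in any case part of Theorem~\ref{thm:main-wellposedness}.
\item The covariance identity should read $G_t+M_t\Sigma_0M_t^\top=\Sigma_t$.
\item Read off $A(\Sigma)\Sigma=\gamma I_d$ directly from $\dot\Sigma=0$ \emph{before} invoking the Ornstein--Uhlenbeck invariant law. Otherwise $A(\Sigma)^{-1}$ is not yet known to exist.
\item For $n<d$ there is no closed scalar ODE for the common part. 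The mean eigenvalue only satisfies $\overline\lambda'=2\gamma-4c_n\binom{d-1}{n-1}\overline\lambda^{\,n}+O(\max_i|\lambda_i-\overline\lambda|^2)$. You still need a perturbation argument as in the paper, or linearization at the nondegenerate minimizer combined with your Lyapunov function.
\end{itemize}
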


The distinction in Theorem~\ref{thm:main-longtime} is explained in Section~\ref{sec:minimizers} through volume-preserving balancing of covariance eigenvalues. Section~\ref{sec:longtime}, in turn, refines the convergence statement by deriving sharp covariance rates and explicit upper and lower $W_2$ estimates. When $n<d$, the covariance relaxation separates into anisotropic and isotropic modes, and the isotropic mode is faster when $n>1$. The full-measure estimate contains an additional decay mechanism associated with the contribution transported from the initial distribution.

The remainder of the paper is organized as follows. Section~\ref{sec:preliminaries} introduces the moment notation, Newton transforms, and the connection with the nondiffusive model. Section~\ref{sec:wellposedness} proves Theorem~\ref{thm:main-wellposedness} and records quantitative Wasserstein estimates. Section~\ref{sec:minimizers} classifies the equilibria and free-energy minimizers and explains the dimension-dependent symmetry breaking. Section~\ref{sec:longtime} completes the proof of Theorem~\ref{thm:main-longtime}, derives the sharp exponential rates, and establishes explicit Wasserstein estimates. The technical cutoff, duality, and covariance arguments are collected in the appendices.

\section{Preliminaries}
\label{sec:preliminaries}

\subsection{Moments and Newton transforms}
For $\rho\in\mathcal P_2(\bbr^\dm)$, we write
\[
m(\rho):=\int_{\bbr^\dm}x\,\d\rho(x),
\qquad
\Sigma(\rho)
:=\int_{\bbr^\dm}
(x-m(\rho))(x-m(\rho))^\top\,\d\rho(x),
\]
and if $(\rho_t)_{t\ge 0}$ is a curve in $\mathcal P_2(\mathbb R^d)$, we write
$m_t := m(\rho_t)$ and $\Sigma_t := \Sigma(\rho_t)$ for simplicity.
Let $A$ be a symmetric $\dm\times\dm$ matrix with eigenvalues
$\lambda_1,\ldots,\lambda_\dm$. We denote by $e_k(A)$ the $k$-th elementary
symmetric polynomial of these eigenvalues and set $e_0(A):=1$. When one or two
eigenvalues are omitted, we write $e_k(\hat\lambda_i)$ and
$e_k(\hat\lambda_i,\hat\lambda_j)$, respectively. We use the convention
that an elementary symmetric polynomial is zero when its degree is negative or
exceeds the number of available variables.

The simplex-volume interaction admits the covariance representation established in
\cite{pronzato2017extended} and revisited from an exterior-algebra viewpoint in
\cite{WZ}:
\[
\mathcal V_n[\rho]
=c_n e_n(\Sigma(\rho)),
\qquad
c_n:=\frac{n+1}{n!}.
\]
This identity reflects
the determinant structure of the squared simplex volume together with the
independence of the sampled vertices. Indeed, after centering, the expectation closes
at the level of second moments and reduces to an elementary symmetric
polynomial of the covariance eigenvalues.

To express the first variation of this finite-dimensional quantity, we use the
$k$-th Newton transform
\[
T_k(A):=\sum_{\ell=0}^k(-1)^\ell e_{k-\ell}(A)A^\ell.
\]
If $A$ is diagonal with diagonal entries $\lambda_1,\ldots,\lambda_\dm$, then
\[
T_k(A)
=\operatorname{diag}\bigl(
 e_k(\hat\lambda_1),\ldots,e_k(\hat\lambda_\dm)
\bigr).
\]
This follows directly from the recurrence
$e_j(\lambda_1,\ldots,\lambda_\dm)
=e_j(\hat\lambda_i)+\lambda_i e_{j-1}(\hat\lambda_i)$ applied in the
$i$-th diagonal entry of the defining polynomial for $T_k$. The first-variation
and gradient formulas in \cite[Proposition~3.1 and Corollary~3.1]{kim2026finite}
then give
\[
\nabla\frac{\delta\mathcal V_n}{\delta\rho}(x)
=2c_nT_{n-1}(\Sigma(\rho))(x-m(\rho)).
\]
Consequently, the velocity field in \eqref{PDE} is
\[
v[\rho](x)
=-2c_nT_{n-1}(\Sigma(\rho))(x-m(\rho)).
\]

\subsection{The nondiffusive model} Setting $\gamma=0$ in \eqref{PDE} gives the nondiffusive Wasserstein gradient flow studied
in \cite{kim2026finite}:
\[
\partial_t\rho_t+\nabla\cdot(\rho_t v[\rho_t])=0,
\qquad
v[\rho](x)=-2c_nT_{n-1}(\Sigma(\rho))(x-m(\rho)).
\]
In this model, the mean is conserved and the covariance satisfies
\[
\frac{\d}{\d t}\Sigma_t
=-4c_n\Sigma_tT_{n-1}(\Sigma_t).
\]
Accordingly, after diagonalization, the covariance eigenvalues solve
\[
\lambda_i'(t)
=-4c_n\lambda_i(t)e_{n-1}(\hat\lambda_i(t)),
\qquad 1\leq i\leq d.
\]
The deterministic flow admits a unique global solution for every initial measure in
$\mathcal P_2(\mathbb R^d)$, and the full measure-valued solution is recovered by a linear
pushforward of the initial datum. Moreover, its long-time behavior is characterized by rank collapse.
In particular, the limiting covariance has rank strictly smaller than $n$, and the limiting measure is
supported on the corresponding affine subspace. More precisely, let
$\lambda_1^0\geq\cdots\geq\lambda_d^0$ be the eigenvalues of the initial covariance matrix.
If $\lambda_n^0=0$, the nondiffusive flow is stationary. On the other hand, if $\lambda_n^0>0$, one can determine a unique index $\ell\in\{1,\ldots,n\}$ satisfying
\[
\lambda_1^0\geq\cdots\geq\lambda_{\ell-1}^0>
\lambda_\ell^0=\cdots=\lambda_n^0\geq
\lambda_{n+1}^0\geq\cdots\geq\lambda_d^0.
\]
In the notation of \cite[Theorem~1.2]{kim2026finite}, the $W_2$-convergence is exponential
when $\ell=n$, whereas for $\ell<n$ its sharp algebraic decay exponent is
$\frac{1}{2(n-\ell)}$.

\section{Global well-posedness and explicit representation}
\label{sec:wellposedness}
\setcounter{equation}{0}

In this section, we establish the global existence and uniqueness of measure-valued weak solutions to \eqref{PDE}. The main point is that the nonlinear drift depends on the solution only through its mean and covariance matrix. The first two moments consequently satisfy a closed finite-dimensional system. Once this reduced system has been solved, the full solution can be reconstructed by an affine transport followed by a Gaussian convolution. We also record quantitative Wasserstein estimates that will be used in the analysis of the long-time behavior.

Note that no absolute continuity is imposed on the initial measure $\rho_0$, which means that $\operatorname{Ent}(\rho_0)$ and $\mathcal E[\rho_0]$ may be infinite. Indeed, the explicit representation below contains convolution with $\mathcal N(0,Q_t)$, and $Q_t$ is positive definite for every $t>0$. Hence the solution has a smooth strictly positive density at every positive time, even when $\rho_0$ is singular.

Recall the notation $c_n=\frac{n+1}{n!}$ used in Section \ref{sec:preliminaries}. With this notation, the velocity field is
\[
v[\rho](x)=-2c_nT_{n-1}(\Sigma(\rho))(x-m(\rho)).
\]

We first specify the notion of solution used below.

\begin{definition}
\label{def:weak-solution-diffusion}
Let $T>0$ and $\rho_0\in\mathcal P_2(\bbr^\dm)$. A curve
$(\rho_t)_{t\in[0,T]}\subset\mathcal P_2(\bbr^\dm)$ is called a weak solution to
\eqref{PDE} on $[0,T]$ with initial datum $\rho_0$ if the map $t\longmapsto\rho_t$ is continuous from $[0,T]$ into
$\bigl(\mathcal P_2(\bbr^\dm),W_2\bigr)$ and
\[
\begin{aligned}
&\int_0^T\int_{\bbr^\dm}
\left(
\partial_t\varphi(t,x)
+v[\rho_t](x)\cdot\nabla\varphi(t,x)
+\gamma\Delta\varphi(t,x)
\right)
\,\d\rho_t(x)\,\d t
+\int_{\bbr^\dm}\varphi(0,x)\,\d\rho_0(x)=0
\end{aligned}
\]
for every $\varphi\in C_c^\infty([0,T)\times\bbr^\dm)$.
\end{definition}

\subsection{Moment equations and reduced dynamics}

We begin by showing that the first two moments of every weak solution satisfy a closed system. We present the formal moment calculation here and justify the use of unbounded test functions by cutoff arguments in Appendix~\ref{app:moment-cutoff}.

\begin{proposition}[Moment equations]
\label{prop:moment-equations-diffusion}
Let $(\rho_t)_{t\in[0,T]}$ be a weak solution in the sense of
Definition~\ref{def:weak-solution-diffusion}, and set
\[
m_t:=m(\rho_t),
\qquad
\Sigma_t:=\Sigma(\rho_t).
\]
Then the following assertions hold.
\begin{enumerate}
\item The mean is conserved. More precisely,
\[
m_t=m_0
\qquad
\text{for every }t\in[0,T].
\]

\item The covariance matrix is absolutely continuous on $[0,T]$ and satisfies
\[
\frac{\d}{\d t}\Sigma_t
=-4c_n\Sigma_tT_{n-1}(\Sigma_t)+2\gamma I_\dm
\]
for almost every $t\in(0,T)$.
\end{enumerate}
\end{proposition}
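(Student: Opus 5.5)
The plan is to test the weak formulation in Definition~\ref{def:weak-solution-diffusion} against the polynomial functions $\varphi(x)=x_i$ and $\varphi(x)=x_ix_j$, multiplied by a time cutoff. Since these are not compactly supported, I will first replace them by $\varphi_R(x)=\chi(x/R)\,p(x)$, where $\chi\in C_c^\infty$ equals $1$ on the unit ball and $p$ is the polynomial. I will also use a time factor $\eta\in C_c^\infty([0,T))$. The weak formulation then gives, in the distributional sense on $(0,T)$,
\[
\frac{\d}{\d t}\int\varphi_R\,\d\rho_t=\int\bigl(v[\rho_t]\cdot\nabla\varphi_R+\gamma\Delta\varphi_R\bigr)\,\d\rho_t .
\]
To pass to the limit $R\to\infty$, I need uniform control on $[0,T]$. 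Continuity of $t\mapsto\rho_t$ in $W_2$ implies that $\sup_{t\in[0,T]}\int|x|^2\,\d\rho_t<\infty$ and that second moments are uniformly integrable on the compact set $\{\rho_t:t\in[0,T]\}$. It also implies that $m_t$ and $\Sigma_t$ are continuous, hence bounded, so $T_{n-1}(\Sigma_t)$ is bounded and $|v[\rho_t](x)|\le C(1+|x|)$.

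For the quadratic test function, $|\nabla\varphi_R|\le C(1+|x|)$ and $|\Delta\varphi_R|\le C$ uniformly in $R$, since the derivatives of $\chi(\cdot/R)$ carry factors $R^{-1}$ and $R^{-2}$ supported where $|x|\sim R$. The integrands are therefore dominated by $C(1+|x|^2)$. Uniform integrability then lets dominated convergence pass to the limit in the time-integrated identity.

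\textbf{Step 1 (mean).} With $p=x_i$, the diffusion term vanishes in the limit and $\nabla p=e_i$. The drift contributes
\[
-2c_n\int T_{n-1}(\Sigma_t)(x-m_t)\,\d\rho_t=0,
\]
because $\int(x-m_t)\,\d\rho_t=0$. Hence $m_t$ is constant, and since it is continuous, $m_t=m_0$.

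\textbf{Step 2 (second moments).} With $p=x_ix_j$, the matrix $M_t=\int xx^\top\,\d\rho_t$ satisfies
\[
\frac{\d}{\d t}M_t=\int\bigl(v x^\top+xv^\top\bigr)\,\d\rho_t+2\gamma I_d .
\]
Substituting $v=-2c_nT_{n-1}(\Sigma_t)(x-m_t)$ and writing $x=(x-m_t)+m_t$, the $m_t$ terms integrate to zero. This leaves
\[
-2c_n\bigl(T_{n-1}(\Sigma_t)\Sigma_t+\Sigma_tT_{n-1}(\Sigma_t)\bigr)+2\gamma I_d .
\]
Because $T_{n-1}(\Sigma)$ is a polynomial in $\Sigma$, the two matrices commute, and the sum equals $-4c_n\Sigma_tT_{n-1}(\Sigma_t)+2\gamma I_d$. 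Since $\Sigma_t=M_t-m_0m_0^\top$ with $m_0$ constant, the same equation holds for $\Sigma_t$.

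The right-hand side is continuous and bounded in $t$. The distributional identity therefore integrates to
\[
\Sigma_t=\Sigma_0+\int_0^t(\cdots)\,\d s,
\]
which shows that $\Sigma_t$ is absolutely continuous (indeed $C^1$) with the stated derivative for almost every $t$. The value at $t=0$ is identified through the initial term together with $W_2$-continuity at $0$.

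I expect the main obstacle to be the cutoff justification, namely the uniform-in-time domination needed to remove $\chi(\cdot/R)$. I will handle it with uniform integrability of $|x|^2$ over the compact curve in $(\mathcal P_2,W_2)$. For that I will use the standard fact that $W_2$-convergence is equivalent to weak convergence together with convergence of second moments, applied along sequences of times.
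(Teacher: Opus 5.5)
Your proposal is correct and follows essentially the same route as the paper's cutoff argument in Appendix~\ref{app:moment-cutoff}: truncated linear and quadratic test functions, the uniform second-moment bound and linear growth of $v[\rho_t]$ coming from $W_2$-continuity, dominated convergence as $R\to\infty$, and commutation of $T_{n-1}(\Sigma_t)$ with $\Sigma_t$. The differences are cosmetic: you work with the uncentered second moment $M_t$ and subtract $m_0m_0^\top$ (the paper centers at $m_0$), and your domination by $C(1+|x|^2)$ already suffices for dominated convergence against $\d\rho_t\,\d t$, so the appeal to uniform integrability is not needed.
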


\begin{proof}
The cutoff justification is given in Appendix~\ref{app:moment-cutoff}. We record only the corresponding moment identities, which give
\[
\frac{\d}{\d t}m_t
=\int_{\bbr^\dm}v[\rho_t](x)\,\d\rho_t(x)=0,
\]
because $v[\rho_t](x)=-A_t(x-m_t)$ with
$A_t:=2c_nT_{n-1}(\Sigma_t)$ and
$\int_{\bbr^\dm}(x-m_t)\,\d\rho_t=0$. Hence $m_t=m_0$.

Applying the same calculation to the second moment gives
\[
\frac{\d}{\d t}\Sigma_t
=-\Sigma_tA_t-A_t\Sigma_t+2\gamma I_\dm.
\]
Since $T_{n-1}(\Sigma_t)$ is a polynomial in $\Sigma_t$, it commutes with $\Sigma_t$. Substituting the definition of $A_t$ gives the claimed covariance equation. The cutoff argument also yields the asserted absolute continuity on every finite time interval.
\end{proof}

We next establish the global solvability of the reduced covariance system and record the preservation of its eigenspaces and eigenvalue ordering.

\begin{proposition}[Global solvability of the reduced system]
\label{prop:global-reduced-diffusion}
Let $\Sigma_0$ be a symmetric positive semidefinite matrix. Then the following
assertions hold.
\begin{enumerate}
\item The covariance equation
\begin{equation}
\label{eq:covariance-diffusion}
\frac{\d}{\d t}\Sigma_t
=-4c_n\Sigma_tT_{n-1}(\Sigma_t)+2\gamma I_\dm,
\qquad
\Sigma_{t}\big|_{t=0}=\Sigma_0,
\end{equation}
admits a unique global solution. The matrix $\Sigma_t$ is symmetric and positive semidefinite for every $t\geq0$, and it is positive definite for every $t>0$.

\item If
\[
\Sigma_0=P^\top D_0P,
\qquad
D_0=\operatorname{diag}(\lambda_1^0,\ldots,\lambda_\dm^0),
\]
where $P$ is orthogonal and $\lambda_i^0\geq0$, then
\[
\Sigma_t=P^\top D_tP,
\qquad
D_t=\operatorname{diag}(\lambda_1(t),\ldots,\lambda_\dm(t)),
\]
and the eigenvalues satisfy
\begin{equation}
\label{eq:eigenvalue-diffusion}
\frac{\d}{\d t}\lambda_i(t)
=-4c_n\lambda_i(t)e_{n-1}(\hat\lambda_i(t))+2\gamma,
\qquad
1\leq i\leq\dm.
\end{equation}

\item For every pair of distinct indices $i,j\in\{1,\ldots,\dm\}$,
\begin{equation}
\label{eq:eigenvalue-difference-diffusion}
\begin{aligned}
\lambda_i(t)-\lambda_j(t)
={}&(\lambda_i^0-\lambda_j^0)
\times\exp\left(
-4c_n\int_0^t e_{n-1}(\hat\lambda_i(s),\hat\lambda_j(s))\,\d s
\right),
\end{aligned}
\end{equation}
where both $\lambda_i$ and $\lambda_j$ are omitted in the elementary
symmetric polynomial. Consequently, the ordering and multiplicities of the
eigenvalues are preserved.
\end{enumerate}
\end{proposition}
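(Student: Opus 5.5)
The plan is to reduce the matrix ODE \eqref{eq:covariance-diffusion} to the diagonal system \eqref{eq:eigenvalue-diffusion} by an ansatz, establish global existence and positivity for the diagonal system, and then invoke uniqueness for the matrix ODE to identify the two.

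\textbf{Local theory and ansatz.} The right-hand side $F(\Sigma)=-4c_n\Sigma T_{n-1}(\Sigma)+2\gamma I_\dm$ is a polynomial in the entries of $\Sigma$, hence locally Lipschitz on the space of symmetric matrices (symmetry is preserved because $\Sigma$ and $T_{n-1}(\Sigma)$ commute). Picard--Lindelöf therefore gives a unique maximal solution. Fix the orthogonal $P$ with $\Sigma_0=P^\top D_0P$, and let $(\lambda_i(t))$ be the maximal solution of \eqref{eq:eigenvalue-diffusion} with data $\lambda_i^0$. Since $T_{n-1}$ commutes with orthogonal conjugation, and since $T_{n-1}(D)=\operatorname{diag}(e_{n-1}(\hat\lambda_i))$ for diagonal $D$, the matrix $P^\top D_tP$ solves \eqref{eq:covariance-diffusion}. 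By uniqueness it coincides with $\Sigma_t$ on the common interval, which proves part (2) once the diagonal system is global.

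\textbf{Positivity and global existence.} While all $\lambda_j\ge0$, we have $e_{n-1}(\hat\lambda_i)\ge0$. Hence at any point where $\lambda_i=0$ we get $\lambda_i'=2\gamma>0$. This gives $\lambda_i(t)>0$ for all $t>0$. To make the argument rigorous, take the first time some $\lambda_i$ would become negative and note that $\lambda_i'\ge 2\gamma-4c_n\lambda_i e_{n-1}$ is positive near zero. The same sign information also bounds the solution above: on the nonnegative region, $\lambda_i'\le 2\gamma$, so $\lambda_i(t)\le\lambda_i^0+2\gamma t$. The solution thus stays in a compact set on every finite interval, so it cannot blow up and the maximal time is infinite. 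This establishes part (1), including positive definiteness for $t>0$, because $\Sigma_t$ is orthogonally similar to $D_t$.

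\textbf{Difference formula.} For part (3), subtract the equations for $\lambda_i$ and $\lambda_j$ and use the decompositions
\[
e_{n-1}(\hat\lambda_i)=e_{n-1}(\hat\lambda_i,\hat\lambda_j)+\lambda_je_{n-2}(\hat\lambda_i,\hat\lambda_j),
\qquad
e_{n-1}(\hat\lambda_j)=e_{n-1}(\hat\lambda_i,\hat\lambda_j)+\lambda_ie_{n-2}(\hat\lambda_i,\hat\lambda_j).
\]
The mixed terms $\lambda_i\lambda_je_{n-2}$ cancel, leaving the linear ODE
\[
(\lambda_i-\lambda_j)'=-4c_ne_{n-1}(\hat\lambda_i,\hat\lambda_j)(\lambda_i-\lambda_j),
\]
which integrates to \eqref{eq:eigenvalue-difference-diffusion}. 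The exponential factor is positive, so the sign of $\lambda_i-\lambda_j$ is constant in time; strict orderings stay strict and equal eigenvalues stay equal. The conventions for elementary symmetric polynomials handle the edge cases $n=1$ and $n-2>d-2$.

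\textbf{Main obstacle.} The only delicate step is the rigorous positivity argument at $t=0$ when some $\lambda_i^0=0$, which I would handle with the first-exit-time argument above.
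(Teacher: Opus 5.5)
Your proposal is correct and follows the paper's proof essentially step for step. Both arguments use local Lipschitz theory, then global solvability of the eigenvalue system via nonnegativity and the bound $\lambda_i\le\lambda_i^0+2\gamma t$, then the identification $\Sigma_t=P^\top D_tP$ by uniqueness of the matrix ODE, and finally the cancellation identity for $\lambda_i-\lambda_j$. The only local difference is the positivity step: the paper writes $\lambda_i(t)=e^{-\int_0^t a_i}\lambda_i^0+2\gamma\int_0^t e^{-\int_s^t a_i}\,\d s$ with $a_i=4c_ne_{n-1}(\hat\lambda_i)$, which gives $\lambda_i(t)>0$ for $t>0$ in one line, whereas your first-exit-time argument, based on $\lambda_i'=2\gamma$ wherever $\lambda_i=0$, reaches the same conclusion equally validly.
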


\begin{proof}
The right-hand side of \eqref{eq:covariance-diffusion} is polynomial in the entries of $\Sigma_t$ and is therefore locally Lipschitz on the finite-dimensional space of symmetric matrices.

We first solve the eigenvalue system \eqref{eq:eigenvalue-diffusion}. Its vector field is locally Lipschitz on $\bbr^\dm$, and local well-posedness gives a unique maximal solution. As long as the solution remains in the nonnegative orthant, set
\[
a_i(t):=4c_ne_{n-1}(\hat\lambda_i(t))\geq0.
\]
Accordingly, the $i$-th equation can be written as
\[
\frac{\d}{\d t}\lambda_i(t)+a_i(t)\lambda_i(t)=2\gamma.
\]
The variation-of-constants formula yields
\[
\lambda_i(t)
=e^{-\int_0^t a_i(r)\,\d r}\lambda_i^0
+2\gamma\int_0^t e^{-\int_s^t a_i(r)\,\d r}\,\d s.
\]
This formula shows that $\lambda_i(t)\geq0$ and, because $\gamma>0$, that $\lambda_i(t)>0$ for every $t>0$. Hence the nonnegative orthant is positively invariant. Moreover, integrating the bound $\frac{\d}{\d t}\lambda_i(t)\leq2\gamma$ gives
\[
0\leq\lambda_i(t)\leq\lambda_i^0+2\gamma t
\]
on every finite interval. The resulting bound rules out finite-time blow-up and therefore extends the eigenvalue solution globally.

Define $\widetilde\Sigma_t:=P^\top D_tP$. Since
\[
T_{n-1}(P^\top D_tP)=P^\top T_{n-1}(D_t)P
\]
and the $i$-th diagonal entry of $T_{n-1}(D_t)$ is $e_{n-1}(\hat\lambda_i(t))$, the matrix $\widetilde\Sigma_t$ solves \eqref{eq:covariance-diffusion} with initial datum $\Sigma_0$. Uniqueness of the matrix ODE gives $\Sigma_t=\widetilde\Sigma_t$.

It remains to prove \eqref{eq:eigenvalue-difference-diffusion}. For $i\neq j$, the elementary symmetric polynomials satisfy
\[
\begin{aligned}
e_{n-1}(\hat\lambda_i)
&=e_{n-1}(\hat\lambda_i,\hat\lambda_j)
+\lambda_j e_{n-2}(\hat\lambda_i,\hat\lambda_j),
\\
e_{n-1}(\hat\lambda_j)
&=e_{n-1}(\hat\lambda_i,\hat\lambda_j)
+\lambda_i e_{n-2}(\hat\lambda_i,\hat\lambda_j).
\end{aligned}
\]
These identities imply
\[
\lambda_i e_{n-1}(\hat\lambda_i)
-\lambda_j e_{n-1}(\hat\lambda_j)
=(\lambda_i-\lambda_j)e_{n-1}(\hat\lambda_i,\hat\lambda_j).
\]
Subtracting the equations for $\lambda_i$ and $\lambda_j$, the diffusion terms cancel and we obtain
\[
\frac{\d}{\d t}(\lambda_i-\lambda_j)
=-4c_ne_{n-1}(\hat\lambda_i,\hat\lambda_j)(\lambda_i-\lambda_j).
\]
Integrating this scalar equation gives \eqref{eq:eigenvalue-difference-diffusion}, which shows that the sign of every eigenvalue difference is preserved and that any equality present initially remains valid for all later times.
\end{proof}

\subsection{Explicit construction of the solution}

Let $\Sigma_t$ be the unique global solution of \eqref{eq:covariance-diffusion}, and define \begin{equation} \label{eq:def-At-diffusion}
A_t:=2c_nT_{n-1}(\Sigma_t).
\end{equation}
Since $\Sigma_t$ is symmetric and positive semidefinite, Proposition~\ref{prop:global-reduced-diffusion} yields
\[ \Sigma_t = P^\top D_tP, \qquad D_t = \operatorname{diag} \bigl(\lambda_1(t),\ldots,\lambda_\dm(t)\bigr), \] where $P$ is orthogonal and $\lambda_i(t)\geq0$ for every $i\in\{1,\ldots,\dm\}$. Moreover,
\[ T_{n-1}(D_t) = \operatorname{diag} \left( e_{n-1}(\hat\lambda_1(t)), \ldots, e_{n-1}(\hat\lambda_\dm(t)) \right). \]
It follows that
\[ T_{n-1}(\Sigma_t) = P^\top T_{n-1}(D_t)P \]
is symmetric and positive semidefinite, and the same is therefore true of $A_t$. Since $t\mapsto\Sigma_t$ is continuous and $T_{n-1}$ is a polynomial map,
\[
t\longmapsto A_t
\quad\text{belongs to}\quad
C\bigl([0,\infty);\bbr^{\dm\times\dm}\bigr).
\]
Consequently, for every $s\geq0$, the standard theory of linear non-autonomous systems yields a unique matrix-valued solution
\[ \Phi(\cdot,s) \in C^1\bigl([s,\infty);\bbr^{\dm\times\dm}\bigr) \] of
\begin{equation}
\label{eq:fundamental-matrix-diffusion} \partial_t\Phi(t,s) = -A_t\Phi(t,s), \qquad \Phi(s,s)=I_\dm.
\end{equation}
By Liouville's formula,
\[
\det\Phi(t,s)
=
\exp\left(
-\int_s^t\operatorname{tr}(A_r)\,\d r
\right)>0,
\qquad
0\leq s\leq t.
\]
Hence $\Phi(t,s)$ is invertible for every $0\leq s\leq t$.

We next define
\begin{equation}
\label{eq:def-Qts-diffusion}
Q(t,s)
:=
2\gamma
\int_s^t
\Phi(t,r)\Phi(t,r)^\top\,\d r,
\qquad
Q_t:=Q(t,0).
\end{equation}
The matrix $Q(t,s)$ is symmetric and positive semidefinite for every $0\leq s\leq t$. Moreover, it is positive definite whenever $t>s$. Indeed, for every $\xi\in\bbr^\dm\setminus\{0\}$,
\[ \begin{aligned} \xi^\top Q(t,s)\xi &= 2\gamma \int_s^t \xi^\top \Phi(t,r)\Phi(t,r)^\top \xi\,\d r \\
&= 2\gamma \int_s^t |\Phi(t,r)^\top\xi|^2\,\d r. \end{aligned} \]
Since $\Phi(t,r)$ is invertible, one has $\Phi(t,r)^\top\xi\neq0$ for every $r\in[s,t]$. Therefore $\xi^\top Q(t,s)\xi>0$, which proves that $Q(t,s)$ is positive definite. With this positivity established, let $\rho_0\in\mathcal P_2(\bbr^\dm)$ and set $m_0:=m(\rho_0)$. We define
\begin{equation}
\label{eq:explicit-solution-diffusion}
\rho_t
:=\bigl(m_0+\Phi(t,0)(\,\cdot\,-m_0)\bigr)_\#\rho_0 * \mathcal N(0,Q_t).
\end{equation}
Equivalently, let $X_0$ be a random variable with law $\rho_0$, and let $Z_t$ be an
independent Gaussian random variable with mean zero and covariance matrix $Q_t$. Then
$\rho_t$ is the law of $m_0+\Phi(t,0)(X_0-m_0)+Z_t$.
The next proposition verifies that the mean and covariance of the measure defined in \eqref{eq:explicit-solution-diffusion} coincide with the prescribed reduced dynamics.

\begin{proposition}[Recovery of the prescribed moments]
\label{prop:moment-recovery-diffusion}
Let $(\rho_t)_{t\geq0}\subset\mathcal P_2(\bbr^\dm)$ be the curve defined by \eqref{eq:explicit-solution-diffusion}. Then
\[
m(\rho_t)=m_0,
\qquad
\Sigma(\rho_t)=\Sigma_t
\]
for every $t\geq0$.
\end{proposition}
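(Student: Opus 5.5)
We will work with the probabilistic representation that follows \eqref{eq:explicit-solution-diffusion}. Write $X_t:=m_0+\Phi(t,0)(X_0-m_0)+Z_t$, where $X_0\sim\rho_0$ and $Z_t\sim\mathcal N(0,Q_t)$ are independent. Both $X_0$ and $Z_t$ have finite second moments, so $X_t$ does too, and $\rho_t\in\mathcal P_2(\bbr^\dm)$. For the mean, linearity of expectation together with $\mathbb E[X_0-m_0]=0$ and $\mathbb E[Z_t]=0$ gives $m(\rho_t)=m_0$ immediately.

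For the covariance, independence kills the cross terms, so
\[
\Sigma(\rho_t)=\Phi(t,0)\Sigma_0\Phi(t,0)^\top+Q_t=:\widetilde\Sigma_t .
\]
It then suffices to show that $\widetilde\Sigma_t$ solves \eqref{eq:covariance-diffusion}. We will differentiate in $t$, using $\partial_t\Phi(t,0)=-A_t\Phi(t,0)$ from \eqref{eq:fundamental-matrix-diffusion}. For $Q_t=2\gamma\int_0^t\Phi(t,r)\Phi(t,r)^\top\,\d r$ we will use the Leibniz rule together with $\Phi(t,t)=I_\dm$ and $\partial_t\Phi(t,r)=-A_t\Phi(t,r)$. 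This gives
\[
\frac{\d}{\d t}\widetilde\Sigma_t=-A_t\widetilde\Sigma_t-\widetilde\Sigma_tA_t+2\gamma I_\dm,\qquad \widetilde\Sigma_0=\Sigma_0,
\]
where $A_t$ is symmetric.

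The main point is that this is a \emph{linear} non-autonomous ODE, of Lyapunov type, for the unknown $\widetilde\Sigma$, with the coefficient $A_t=2c_nT_{n-1}(\Sigma_t)$ frozen along the prescribed curve $\Sigma_t$. It is not the nonlinear equation \eqref{eq:covariance-diffusion} itself. To close the argument, we will check that $\Sigma_t$ also solves this linear equation. Since $A_t$ is a polynomial in $\Sigma_t$, the two matrices commute, so
\[
-A_t\Sigma_t-\Sigma_tA_t+2\gamma I_\dm=-4c_n\Sigma_tT_{n-1}(\Sigma_t)+2\gamma I_\dm=\frac{\d}{\d t}\Sigma_t
\]
by Proposition~\ref{prop:global-reduced-diffusion}. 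The linear equation has continuous coefficients, so its solution with initial datum $\Sigma_0$ is unique, and therefore $\widetilde\Sigma_t=\Sigma_t$ for all $t\geq0$.

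The only delicate step is justifying the differentiation under the integral defining $Q_t$. This should be routine: $(t,r)\mapsto\Phi(t,r)$ is $C^1$ in $t$ and jointly continuous on $\{0\le r\le t\}$, because $A$ is continuous and by standard dependence of linear ODEs on parameters. Alternatively, we can avoid differentiation of $Q_t$ altogether by using the cocycle identity $\Phi(t,r)=\Phi(t,0)\Phi(r,0)^{-1}$. This writes $Q_t=\Phi(t,0)\bigl(2\gamma\int_0^t\Phi(r,0)^{-1}\Phi(r,0)^{-\top}\d r\bigr)\Phi(t,0)^\top$, which can be differentiated by the product rule.
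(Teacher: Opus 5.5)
Your proposal is correct and follows the paper's proof essentially step for step. Like the paper, you compute $\Sigma(\rho_t)=\Phi(t,0)\Sigma_0\Phi(t,0)^\top+Q_t$ by independence, differentiate it to obtain the linear Lyapunov equation $\tfrac{\mathrm{d}}{\mathrm{d}t}M_t=-A_tM_t-M_tA_t+2\gamma I_d$, use the commutation of $A_t$ with $\Sigma_t$ to show $\Sigma_t$ solves the same linear equation, and conclude by uniqueness; your remarks on justifying the Leibniz differentiation of $Q_t$ (or avoiding it via the cocycle identity) add a small piece of rigor the paper leaves implicit.
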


\begin{proof}
Because the Gaussian measure $\mathcal N(0,Q_t)$ has mean zero, the mean of $\rho_t$ is
\[
\begin{aligned}
m(\rho_t)
&=\int_{\bbr^\dm}\left(m_0+\Phi(t,0)(x-m_0)\right)\,\d\rho_0(x)=m_0.
\end{aligned}
\]
The affine pushforward has mean $m_0$, while the Gaussian factor has mean zero. Their covariances therefore add, which gives
\begin{equation}
\label{eq:covariance-representation-diffusion}
\Sigma(\rho_t)
=\Phi(t,0)\Sigma_0\Phi(t,0)^\top+Q_t.
\end{equation}
Set $M_t:=\Phi(t,0)\Sigma_0\Phi(t,0)^\top+Q_t$. Differentiating the first term and using \eqref{eq:fundamental-matrix-diffusion}, we obtain
\[
\frac{\d}{\d t}\left(\Phi(t,0)\Sigma_0\Phi(t,0)^\top\right)
=-A_t\Phi(t,0)\Sigma_0\Phi(t,0)^\top
-\Phi(t,0)\Sigma_0\Phi(t,0)^\top A_t.
\]
Differentiating \eqref{eq:def-Qts-diffusion} gives
\[
\begin{aligned}
\frac{\d}{\d t}Q_t
&=2\gamma I_\dm
+2\gamma\int_0^t\partial_t\left(\Phi(t,s)\Phi(t,s)^\top\right)\,\d s
\\
&=2\gamma I_\dm-A_tQ_t-Q_tA_t.
\end{aligned}
\]
Adding the two differentiated terms gives
\[
\frac{\d}{\d t}M_t=-A_tM_t-M_tA_t+2\gamma I_\dm.
\]
On the other hand, since $A_t=2c_nT_{n-1}(\Sigma_t)$ commutes with $\Sigma_t$, equation \eqref{eq:covariance-diffusion} is equivalent to
\[
\frac{\d}{\d t}\Sigma_t=-A_t\Sigma_t-\Sigma_tA_t+2\gamma I_\dm.
\]
Both $M_t$ and $\Sigma_t$ solve the same linear matrix ODE and satisfy $M_0=\Sigma_0$. Uniqueness yields $M_t=\Sigma_t$, proving \eqref{eq:covariance-representation-diffusion} with $\Sigma(\rho_t)=\Sigma_t$.
\end{proof}

\begin{proposition}[Weak solution property]
\label{prop:weak-solution-explicit-diffusion}
Let $(\rho_t)_{t\geq0}\subset\mathcal P_2(\bbr^\dm)$ be the curve defined by \eqref{eq:explicit-solution-diffusion}. Then $(\rho_t)_{t\geq0}$ belongs to
\[
C\bigl([0,\infty);(\mathcal P_2(\bbr^\dm),W_2)\bigr)
\]
and is a weak solution to \eqref{PDE}.
\end{proposition}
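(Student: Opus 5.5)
We need to prove two things: $W_2$-continuity of $t\mapsto\rho_t$, and the weak formulation. Throughout we use the random-variable representation $X_t:=m_0+\Phi(t,0)(X_0-m_0)+Z_t$ with $Z_t\sim\mathcal N(0,Q_t)$ independent of $X_0$. The idea is to realize this as an Ornstein--Uhlenbeck-type SDE driven by the time-dependent matrix $A_t$ from \eqref{eq:def-At-diffusion}.

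\textbf{Continuity.} Fix a standard $d$-dimensional Brownian motion $W$ independent of $X_0$, and set
\[
\widetilde X_t:=m_0+\Phi(t,0)(X_0-m_0)+\sqrt{2\gamma}\int_0^t\Phi(t,r)\,\d W_r .
\]
By the It\^o isometry the stochastic integral is a centered Gaussian with covariance $2\gamma\int_0^t\Phi(t,r)\Phi(t,r)^\top\,\d r=Q_t$. It is independent of $X_0$, so $\operatorname{Law}(\widetilde X_t)=\rho_t$. Using $\Phi(t,r)=\Phi(t,0)\Phi(r,0)^{-1}$ and Itô's product rule (or variation of constants), $\widetilde X$ solves the linear SDE
\[
\d\widetilde X_t=-A_t(\widetilde X_t-m_0)\,\d t+\sqrt{2\gamma}\,\d W_t .
\]
Since $\Phi$ is continuous on compact sets and the stochastic integral is $L^2$-continuous, the process is continuous in $L^2(\Omega)$. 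Indeed,
\[
\mathbb E|\widetilde X_t-\widetilde X_s|^2\le C\bigl(\|\Phi(t,0)-\Phi(s,0)\|^2\,\mathbb E|X_0-m_0|^2+\mathbb E|I_t-I_s|^2\bigr)\to0,
\]
where $I_t:=\sqrt{2\gamma}\int_0^t\Phi(t,r)\,\d W_r$. Because $W_2(\rho_t,\rho_s)^2\le\mathbb E|\widetilde X_t-\widetilde X_s|^2$, this yields $\rho\in C([0,\infty);(\mathcal P_2,W_2))$. If one prefers to avoid stochastic calculus, the same continuity follows directly from the coupling $(x,z)\mapsto m_0+\Phi(t,0)(x-m_0)+Q_t^{1/2}z$ with a fixed $z\sim\mathcal N(0,I_d)$, together with continuity of $Q_t^{1/2}$.

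\textbf{Weak formulation.} Take $\varphi\in C_c^\infty([0,T)\times\mathbb R^d)$ and apply Itô's formula to $\varphi(t,\widetilde X_t)$:
\[
\d\varphi(t,\widetilde X_t)=\bigl(\partial_t\varphi-A_t(\widetilde X_t-m_0)\cdot\nabla\varphi+\gamma\Delta\varphi\bigr)(t,\widetilde X_t)\,\d t+\text{martingale}.
\]
Since $\nabla\varphi$ is bounded, the martingale term has zero expectation. Integrating over $[0,T]$ and using $\varphi(T,\cdot)=0$ gives
\[
\int_0^T\!\!\int\bigl(\partial_t\varphi-A_t(x-m_0)\cdot\nabla\varphi+\gamma\Delta\varphi\bigr)\,\d\rho_t\,\d t+\int\varphi(0,\cdot)\,\d\rho_0=0 .
\]
This is exactly the weak formulation of Definition~\ref{def:weak-solution-diffusion}, once we know $v[\rho_t](x)=-A_t(x-m_0)$. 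That identification is Proposition~\ref{prop:moment-recovery-diffusion}: it gives $m(\rho_t)=m_0$ and $\Sigma(\rho_t)=\Sigma_t$, so $v[\rho_t]=-2c_nT_{n-1}(\Sigma_t)(x-m_0)=-A_t(x-m_0)$. This identification is the genuinely nonlinear step, and the earlier proposition reduces it to bookkeeping.

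\textbf{Main obstacle.} The key difficulty is making the SDE/Itô step rigorous in the right form. Concretely, one must verify that $\widetilde X$ solves the linear SDE with a time-continuous coefficient $A_t$; this follows from $\partial_t\Phi(t,r)=-A_t\Phi(t,r)$ and the stochastic Fubini theorem. If a purely analytic proof is preferred, the alternative is to work with $\psi(t,y):=\mathbb E\,\varphi\bigl(t,m_0+\Phi(t,0)(y-m_0)+Z_t\bigr)$. One differentiates the Gaussian convolution in $t$ using the heat-type identity $\partial_t\mathcal N(0,Q_t)=\tfrac12\nabla\cdot(\dot Q_t\nabla\,\cdot)$. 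The identity $\dot Q_t=2\gamma I-A_tQ_t-Q_tA_t$ from the proof of Proposition~\ref{prop:moment-recovery-diffusion} then shows that the $Q_t$-terms combine with the transport term to produce exactly $\gamma\Delta\varphi$ plus the drift term. Either route closes the argument; I would present the stochastic one as primary.
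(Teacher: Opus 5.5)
Your proposal is correct and follows essentially the same route as the paper: the stochastic representation $m_0+\Phi(t,0)(X_0-m_0)+\sqrt{2\gamma}\int_0^t\Phi(t,r)\,\d W_r$ solving the linear SDE, It\^o's formula for $\varphi(t,\widetilde X_t)$ combined with the identification $v[\rho_t](x)=-A_t(x-m_0)$ from Proposition~\ref{prop:moment-recovery-diffusion}, and $W_2$-continuity via the coupling $(\widetilde X_s,\widetilde X_t)$ and It\^o's isometry. Your alternative continuity argument, which couples through a fixed standard Gaussian $Z$ as $Q_t^{1/2}Z$, is also valid and avoids stochastic calculus for that part.
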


\begin{proof}
Let $X_0$ be a random variable with law $\rho_0$, and let $(B_t)_{t\geq0}$ be a standard $\dm$-dimensional Brownian motion independent of $X_0$. Define
\begin{equation}
\label{eq:stochastic-representation-diffusion}
Y_t
:=m_0+\Phi(t,0)(X_0-m_0)
+\sqrt{2\gamma}\int_0^t\Phi(t,s)\,\d B_s.
\end{equation}
The stochastic integral is Gaussian with mean zero and covariance $Q_t$, and therefore $Y_t$ has law $\rho_t$. Moreover, the variation-of-constants formula gives
\[
\d Y_t=-A_t(Y_t-m_0)\,\d t+\sqrt{2\gamma}\,\d B_t,
\qquad
Y_0=X_0.
\]

Fix $T>0$ and $\varphi\in C_c^\infty([0,T)\times\bbr^\dm)$. It\^o's formula gives
\[
\d\varphi(t,Y_t)
=\big[
\partial_t\varphi(t,Y_t)
-A_t(Y_t-m_0)\cdot\nabla\varphi(t,Y_t)
+\gamma\Delta\varphi(t,Y_t)
\big]\,\d t
+\sqrt{2\gamma}\nabla\varphi(t,Y_t)\cdot\d B_t.
\]
Taking expectations and integrating over $[0,T]$, the martingale term disappears and we obtain
\[
\int_0^T\int_{\bbr^\dm}
\big[
\partial_t\varphi(t,x)
-A_t(x-m_0)\cdot\nabla\varphi(t,x)
+\gamma\Delta\varphi(t,x)
\big]
\,\d\rho_t(x)\,\d t
+\int_{\bbr^\dm}\varphi(0,x)\,\d\rho_0(x)=0.
\]
By Proposition~\ref{prop:moment-recovery-diffusion}, $m(\rho_t)=m_0$ and $\Sigma(\rho_t)=\Sigma_t$. These identities imply
\[
-A_t(x-m_0)
=-2c_nT_{n-1}(\Sigma(\rho_t))(x-m(\rho_t))
=v[\rho_t](x),
\]
which gives the nonlinear weak formulation.

It remains to verify continuity in $W_2$. The deterministic part of \eqref{eq:stochastic-representation-diffusion} is continuous in $L^2$ because $t\mapsto\Phi(t,0)$ is continuous, and the stochastic convolution is continuous in $L^2$ by It\^o's isometry and the continuity of $\Phi(t,s)$. These two observations give
\[
\lim_{s\to t}\mathbb E|Y_s-Y_t|^2=0.
\]
The joint law of $(Y_s,Y_t)$ is an admissible coupling of $\rho_s$ and $\rho_t$. Since $W_2^2$ is the infimum of the quadratic transportation cost over all such couplings, this particular coupling gives
\[
W_2^2(\rho_s,\rho_t)\leq\mathbb E|Y_s-Y_t|^2\longrightarrow0,
\]
which proves the claimed continuity.
\end{proof}

\subsection{Uniqueness of weak solutions}

The preceding construction provides a global weak solution. We now show that every weak solution with the same initial datum must coincide with it. The crucial observation is that Proposition~\ref{prop:moment-equations-diffusion} forces every weak solution to have the same mean and covariance matrix. Once these quantities are fixed, the nonlinear equation becomes a linear Fokker--Planck equation with prescribed coefficients.

\begin{theorem}[Global well-posedness and explicit representation]
\label{thm:global-wellposedness-diffusion}
Let $\rho_0\in\mathcal P_2(\bbr^\dm)$. Then the following assertions hold.
\begin{enumerate}
\item Equation~\eqref{PDE} admits a unique global weak solution 
\[
\rho\in C\bigl([0,\infty);(\mathcal P_2(\bbr^\dm),W_2)\bigr).
\]

\item The solution is explicitly represented by
\[
\rho_t
=\left(m_0+\Phi(t,0)(\cdot-m_0)\right)_\#\rho_0
*\mathcal N(0,Q_t),
\]
where $\Sigma_t$, $A_t$, $\Phi(t,s)$, and $Q_t$ are defined by
\eqref{eq:covariance-diffusion}, \eqref{eq:def-At-diffusion},
\eqref{eq:fundamental-matrix-diffusion}, and
\eqref{eq:def-Qts-diffusion}, respectively.

\item For every $t>0$, the matrix $Q_t$ is positive definite. Consequently,
$\rho_t$ has a smooth strictly positive density.
\end{enumerate}
\end{theorem}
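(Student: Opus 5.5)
Existence and the explicit representation in (2) are already in hand. Proposition~\ref{prop:weak-solution-explicit-diffusion} shows that the curve defined by \eqref{eq:explicit-solution-diffusion} is a weak solution in $C([0,\infty);(\mathcal P_2(\bbr^\dm),W_2))$. Assertion (3) is the positive definiteness of $Q(t,0)$ for $t>0$, which was established right after \eqref{eq:def-Qts-diffusion}. Smoothness and strict positivity of the density then follow because $\rho_t$ is a probability measure convolved with the nondegenerate Gaussian density of $\mathcal N(0,Q_t)$. That density is smooth and strictly positive with all derivatives bounded, so differentiation under the integral is legitimate. The real content is therefore uniqueness in (1).

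For uniqueness, I would fix a weak solution $(\tilde\rho_t)_{t\in[0,T]}$ with the same initial datum $\rho_0$. By Proposition~\ref{prop:moment-equations-diffusion}, $m(\tilde\rho_t)=m_0$, and $\Sigma(\tilde\rho_t)$ is an absolutely continuous solution of \eqref{eq:covariance-diffusion} with datum $\Sigma_0$. Uniqueness for this locally Lipschitz ODE (Proposition~\ref{prop:global-reduced-diffusion}), applied in its integral form, gives $\Sigma(\tilde\rho_t)=\Sigma_t$. Consequently $v[\tilde\rho_t](x)=-A_t(x-m_0)$, and both $\tilde\rho$ and the constructed $\rho$ are weak solutions of the same \emph{linear} Fokker--Planck equation
\[
\partial_t\mu_t+\nabla\cdot\bigl(\mu_t b_t\bigr)=\gamma\Delta\mu_t,\qquad b_t(x):=-A_t(x-m_0),
\]
with continuous-in-time, affine-in-space drift and the same initial datum $\rho_0$.

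It remains to prove uniqueness for this linear equation among $W_2$-continuous curves in $\mathcal P_2$. I would use a duality argument. Given $\psi\in C_c^\infty(\bbr^\dm)$ and $\tau\in(0,T]$, solve the backward Kolmogorov equation
\[
\partial_s\varphi+b_s\cdot\nabla\varphi+\gamma\Delta\varphi=0\ \text{ on }[0,\tau),\qquad \varphi(\tau,\cdot)=\psi.
\]
Its solution is explicit: $\varphi(s,x)=\int\psi\bigl(m_0+\Phi(\tau,s)(x-m_0)+z\bigr)\,\d\mathcal N(0,Q(\tau,s))(z)$. This function is smooth in $x$, and it and its spatial derivatives are bounded uniformly in $s$, because $\Phi(\tau,s)$ and $Q(\tau,s)$ are continuous in $s$ and the derivatives of $\psi$ are bounded.

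I would then test the weak formulation of $\mu_t=\tilde\rho_t-\rho_t$ against $\varphi$, cut off in space and mollified in time so that it becomes an admissible test function. This yields $\int\psi\,\d\tilde\rho_\tau=\int\psi\,\d\rho_\tau$ for all $\psi$ and $\tau$, hence $\tilde\rho=\rho$.

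The main obstacle is this justification step. The dual solution $\varphi$ is not compactly supported, and $b_s\cdot\nabla\varphi$ grows linearly in $x$. The cutoff error terms, of the form $\int|x|\,|\nabla\chi_R|\,\d\mu_t$ and $\int|\Delta\chi_R|\,\d\mu_t$, must be controlled. Uniform-in-time second moments of both curves on $[0,T]$ make these errors vanish as $R\to\infty$; those bounds follow from $W_2$-continuity on a compact interval. The same cutoff and duality arguments collected in the appendices would be used for this step.
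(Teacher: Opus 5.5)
Your proposal is correct and follows essentially the same route as the paper. Existence comes from the explicit construction. The moment equations and ODE uniqueness then freeze $m$ and $\Sigma$, reducing the problem to a linear Fokker--Planck equation, and uniqueness for that equation follows by duality with the explicit backward Kolmogorov solution, using spatial cutoffs and time mollification (Appendix~\ref{app:linear-duality}). The only cosmetic difference is that you apply the duality identity to the difference $\tilde\rho_t-\rho_t$. The paper instead applies it to the arbitrary solution and evaluates $\int u(0,\cdot)\,\d\rho_0$ directly through the explicit representation, which gives $\int\psi\,\d\rho_t$.
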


\begin{proof}
Existence follows from Propositions~\ref{prop:global-reduced-diffusion},
\ref{prop:moment-recovery-diffusion}, and
\ref{prop:weak-solution-explicit-diffusion}.

For uniqueness, fix $T>0$ and let \[
\mu\in C\bigl([0,T];(\mathcal P_2(\bbr^\dm),W_2)\bigr)
\]
be any weak solution with $\mu_0=\rho_0$. Proposition~\ref{prop:moment-equations-diffusion} gives
\[
m(\mu_t)=m_0,
\]
and shows that $\Sigma(\mu_t)$ solves \eqref{eq:covariance-diffusion} with initial datum $\Sigma_0$. By uniqueness of the reduced covariance equation,
\[
\Sigma(\mu_t)=\Sigma_t
\qquad
\text{for every }t\in[0,T].
\]
It follows that $\mu_t$ solves the linear equation
\begin{equation}
\label{eq:linear-FP-diffusion}
\partial_t\mu_t
=\nabla\cdot\left(A_t(x-m_0)\mu_t\right)+\gamma\Delta\mu_t,
\qquad
\mu_t\big|_{t=0}=\rho_0,
\end{equation}
where $A_t$ is now a prescribed continuous matrix-valued function of time.

Uniqueness for the linear equation
\eqref{eq:linear-FP-diffusion} follows from the backward duality identity
proved in Appendix~\ref{app:linear-duality}. More precisely, for fixed
$t\in(0,T]$ and $\psi\in C_c^\infty(\bbr^\dm)$, let
\[
u(s,x)
:=\int_{\bbr^\dm}
\psi\left(m_0+\Phi(t,s)(x-m_0)+z\right)
\,\d\mathcal N(0,Q(t,s))(z).
\]
Then $u$ solves the backward Kolmogorov equation associated with
\eqref{eq:linear-FP-diffusion}, and the duality identity gives
\[
\int_{\bbr^\dm}\psi(x)\,\d\mu_t(x)
=\int_{\bbr^\dm}u(0,x)\,\d\rho_0(x).
\]
By the definition of $u$, the right-hand side equals
\[
\begin{aligned}
&\int_{\bbr^\dm}\int_{\bbr^\dm}
\psi\left(m_0+\Phi(t,0)(x-m_0)+z\right)\d\mathcal N(0,Q_t)(z)\,\d\rho_0(x)
=\int_{\bbr^\dm}\psi\,\d\rho_t.
\end{aligned}
\]
Thus $\mu_t=\rho_t$ for every $t\in[0,T]$. Since $T>0$ was arbitrary,
uniqueness holds globally.

The positive definiteness of $Q_t$ was shown immediately after \eqref{eq:def-Qts-diffusion}. Convolution with the nondegenerate Gaussian $\mathcal N(0,Q_t)$ yields a smooth strictly positive density for every $t>0$.
\end{proof}

\begin{proof}[Proof of Theorem~\ref{thm:main-wellposedness}]
Proposition~\ref{prop:moment-equations-diffusion} gives conservation of the mean and the closed covariance equation. Proposition~\ref{prop:global-reduced-diffusion} proves global solvability of that equation and strict positivity of the covariance for positive times. Theorem~\ref{thm:global-wellposedness-diffusion} then gives the unique global weak solution, the explicit representation, and instantaneous Gaussian regularization.
\end{proof}

\subsection{Representations and Wasserstein estimates}

We conclude this section with formulas and estimates used below. Suppose that
\[
\Sigma_0=P^\top\operatorname{diag}(\lambda_1^0,\ldots,\lambda_\dm^0)P.
\]
The covariance and drift matrices remain diagonal in this basis, with
\[
\begin{aligned}
\Sigma_t
&=P^\top\operatorname{diag}(\lambda_1(t),\ldots,\lambda_\dm(t))P,\\
A_t
&=P^\top\operatorname{diag}\left(
2c_ne_{n-1}(\hat\lambda_1(t)),\ldots,
2c_ne_{n-1}(\hat\lambda_\dm(t))
\right)P.
\end{aligned}
\]
The transition matrix is therefore diagonal in the same basis and satisfies
\begin{equation}
\label{eq:Phi-diagonal-diffusion}
\Phi(t,s)
=P^\top\operatorname{diag}\left(r_1(t,s),\ldots,r_\dm(t,s)\right)P.
\end{equation}
More precisely, its diagonal factors are
\begin{equation}
\label{eq:ri-diffusion}
r_i(t,s)
:=\exp\left(
-2c_n\int_s^t e_{n-1}(\hat\lambda_i(\tau))\,\d\tau
\right).
\end{equation}
In turn, the same common eigenbasis diagonalizes the noise covariance, which satisfies
\begin{equation}
\label{eq:Q-diagonal-diffusion}
\begin{aligned}
Q_t&=P^\top\operatorname{diag}(q_1(t),\ldots,q_\dm(t))P,\\
q_i(t)&:=2\gamma\int_0^t r_i(t,s)^2\,\d s.
\end{aligned}
\end{equation}
To obtain a useful relation for $q_i(t)$, we return to the covariance identity \eqref{eq:covariance-representation-diffusion}. By \eqref{eq:Phi-diagonal-diffusion}, the $i$-th diagonal entry of $\Phi(t,0)\Sigma_0\Phi(t,0)^\top$ in the common eigenbasis is $\lambda_i^0r_i(t,0)^2$. The corresponding entries of $\Sigma_t$ and $Q_t$ are $\lambda_i(t)$ and $q_i(t)$, respectively. Comparing the $i$-th diagonal entries in \eqref{eq:covariance-representation-diffusion} yields
\begin{equation}
\label{eq:qi-covariance-diffusion}
q_i(t)=\lambda_i(t)-\lambda_i^0r_i(t,0)^2.
\end{equation}
This identity separates the total covariance in the $i$-th eigendirection into the part transported from the initial covariance and the part generated by diffusion.

The next proposition gives an upper bound from an explicit coupling and a lower bound determined by the first two moments.

\begin{proposition}[Wasserstein upper and lower bounds]
\label{prop:W2-bounds-diffusion}
Let
\[
\overline\Sigma
=P^\top\operatorname{diag}(\overline\lambda_1,\ldots,\overline\lambda_\dm)P
\]
be a symmetric positive semidefinite matrix, and set
$\overline\rho:=\mathcal N(m_0,\overline\Sigma)$. Then
\begin{enumerate}
\item The explicit coupling constructed below gives
\begin{equation}
\label{eq:direct-W2-upper-diffusion}
\begin{aligned}
W_2^2(\rho_t,\overline\rho)
\leq{}&\sum_{i=1}^\dm\lambda_i^0r_i(t,0)^2
+\sum_{i=1}^\dm
\left(\sqrt{q_i(t)}-\sqrt{\overline\lambda_i}\right)^2.
\end{aligned}
\end{equation}

\item The Gelbrich covariance bound \cite{gelbrich1990formula} yields
\begin{equation}
\label{eq:Gelbrich-diagonal-diffusion}
W_2^2(\rho_t,\overline\rho)
\geq\sum_{i=1}^\dm
\left(\sqrt{\lambda_i(t)}-\sqrt{\overline\lambda_i}\right)^2.
\end{equation}
\end{enumerate}
\end{proposition}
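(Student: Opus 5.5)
For the upper bound \eqref{eq:direct-W2-upper-diffusion}, we write the coupling explicitly. Let $X_0\sim\rho_0$, let $G\sim\mathcal N(0,I_\dm)$ be independent of $X_0$, and put $Z_t:=P^\top\operatorname{diag}(\sqrt{q_i(t)})PG$. Then $Z_t\sim\mathcal N(0,Q_t)$ by \eqref{eq:Q-diagonal-diffusion}, so $Y_t:=m_0+\Phi(t,0)(X_0-m_0)+Z_t$ has law $\rho_t$ by \eqref{eq:explicit-solution-diffusion}. On the same probability space set $\overline Y:=m_0+P^\top\operatorname{diag}(\sqrt{\overline\lambda_i})PG$, which has law $\overline\rho$. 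The joint law of $(Y_t,\overline Y)$ is admissible, so
\[
W_2^2(\rho_t,\overline\rho)\leq\mathbb E\bigl|\Phi(t,0)(X_0-m_0)+P^\top\operatorname{diag}\bigl(\sqrt{q_i(t)}-\sqrt{\overline\lambda_i}\bigr)PG\bigr|^2 .
\]
Because $X_0$ and $G$ are independent and $\mathbb E[X_0-m_0]=0$, the cross term vanishes. The first remaining term is
\[
\operatorname{tr}\bigl(\Phi(t,0)\Sigma_0\Phi(t,0)^\top\bigr)=\sum_i\lambda_i^0r_i(t,0)^2,
\]
which follows from \eqref{eq:Phi-diagonal-diffusion} and the common eigenbasis. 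The second is $\sum_i(\sqrt{q_i(t)}-\sqrt{\overline\lambda_i})^2$, since $\mathbb E|P^\top DPG|^2=\operatorname{tr}(D^2)$. Together these give the upper bound.

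For the lower bound, we invoke Gelbrich's inequality. For probability measures with means $m_1,m_2$ and covariances $S_1,S_2$ it states that
\[
W_2^2\geq|m_1-m_2|^2+\operatorname{tr}\bigl(S_1+S_2-2(S_2^{1/2}S_1S_2^{1/2})^{1/2}\bigr).
\]
This holds because any coupling has second moments forming a joint covariance with these marginal blocks, and the Gaussian optimization over such joint covariances yields the Bures distance. Here the means coincide by Proposition~\ref{prop:moment-recovery-diffusion}, which also gives $\Sigma(\rho_t)=\Sigma_t=P^\top D_tP$. Since $\Sigma_t$ and $\overline\Sigma$ are simultaneously diagonalized by $P$, they commute. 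Hence $(\overline\Sigma^{1/2}\Sigma_t\overline\Sigma^{1/2})^{1/2}=P^\top\operatorname{diag}(\sqrt{\lambda_i(t)\overline\lambda_i})P$, and the trace collapses to $\sum_i(\sqrt{\lambda_i(t)}-\sqrt{\overline\lambda_i})^2$.

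The main point needing care is the lower bound. Both the commuting structure and the applicability of Gelbrich's bound to a non-Gaussian $\rho_t$ rely on the fact that the bound depends only on first and second moments. I would cite \cite{gelbrich1990formula} directly rather than reprove it; everything else is routine bookkeeping in the common eigenbasis.
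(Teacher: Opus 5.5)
Your proposal is correct and follows the paper's proof: the same synchronous coupling sharing one standard Gaussian between $Q_t^{1/2}G$ and $\overline\Sigma^{1/2}G$ (your $P^\top\operatorname{diag}(\sqrt{q_i(t)})P$ is exactly $Q_t^{1/2}$), with the cross term removed by independence, and the lower bound from Gelbrich's moment-only inequality evaluated in the common eigenbasis. The only difference is that the paper proves the Gelbrich inequality in an appendix via a Schur complement and operator/nuclear-norm duality, whereas you cite it.
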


\begin{proof}
Let $X_0$ have law $\rho_0$, and let $Z$ be a standard Gaussian random vector independent of $X_0$. Define
\[
\begin{aligned}
Y_t&:=m_0+\Phi(t,0)(X_0-m_0)+Q_t^{1/2}Z,\\
\overline Y&:=m_0+\overline\Sigma^{1/2}Z.
\end{aligned}
\]
Then $Y_t$ and $\overline Y$ have laws $\rho_t$ and $\overline\rho$, respectively. This coupling gives
\[
W_2^2(\rho_t,\overline\rho)\leq\mathbb E|Y_t-\overline Y|^2.
\]
The random variables $X_0-m_0$ and $Z$ are independent and have mean zero. The cross term therefore vanishes, and
\[
\begin{aligned}
\mathbb E|Y_t-\overline Y|^2
={}&\operatorname{tr}\left(\Phi(t,0)\Sigma_0\Phi(t,0)^\top\right)
+\left\|Q_t^{1/2}-\overline\Sigma^{1/2}\right\|_{\mathrm F}^2.
\end{aligned}
\]
Here $\|\cdot\|_{\mathrm F}$ denotes the Frobenius norm.
Using \eqref{eq:Phi-diagonal-diffusion} and \eqref{eq:Q-diagonal-diffusion} gives \eqref{eq:direct-W2-upper-diffusion}. The first term measures the remaining contribution of the transported initial datum, while the second measures the mismatch between the Gaussian covariance generated by diffusion and the target covariance.

The lower bound is the simultaneously diagonalizable special case of the
general Gelbrich inequality \cite{gelbrich1990formula}. For completeness, a short
proof of the covariance inequality is included in
Appendix~\ref{app:gelbrich-bound}.
\end{proof}

\section{Equilibria, minimizers, and symmetry breaking}
\label{sec:minimizers}
\setcounter{equation}{0}

In this section, we classify the global minimizers of the free energy
\eqref{eqn:energy} and show that they coincide with the stationary states of
\eqref{PDE}. Here and below, an equilibrium means a measure for which the constant
curve is a weak solution of \eqref{PDE}. Although our well-posedness analysis does
not rely on a metric gradient-flow formulation, this identification makes the
minimization problem dynamically relevant. 

\subsection{Reduction to Gaussian measures}
\begin{proposition}[Gaussian reduction at fixed moments]
\label{prop:Gaussian-reduction-minimizer}
Suppose that $\rho\in\mathcal P_2(\bbr^\dm)$ has finite free energy, and set
\[
m:=m(\rho),
\qquad
\Sigma:=\Sigma(\rho),
\qquad
G_\rho:=\mathcal N(m,\Sigma).
\]
Then $\Sigma$ is positive definite and
\begin{equation}
\label{eq:Gaussian-reduction-energy}
\mathcal E[\rho]-\mathcal E[G_\rho]
=\gamma\mathcal H(\rho\mid G_\rho)\geq0.
\end{equation}
Here the relative entropy of $\rho$ with respect to $G_\rho$ is defined by
\[
\mathcal H(\rho\mid G_\rho)
:=\int_{\bbr^\dm}
\log\left(\frac{\d\rho}{\d G_\rho}\right)\,\d\rho.
\] Equality in
\eqref{eq:Gaussian-reduction-energy} holds if and only if $\rho=G_\rho$.
Consequently, every global minimizer of $\mathcal E$ is a nondegenerate
Gaussian measure.
\end{proposition}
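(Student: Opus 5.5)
The plan is to reduce everything to the classical identity relating the entropy of a measure to that of the Gaussian with the same first two moments. The interaction part $\mathcal V_n[\rho]=c_ne_n(\Sigma(\rho))$ depends on $\rho$ only through its covariance, so $\mathcal V_n[\rho]=\mathcal V_n[G_\rho]$. The free-energy difference therefore equals $\gamma(\operatorname{Ent}(\rho)-\operatorname{Ent}(G_\rho))$. It remains to show that this entropy difference equals $\mathcal H(\rho\mid G_\rho)$.

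\textbf{Step 1: $\Sigma$ is positive definite.} Since $\mathcal E[\rho]<\infty$ and $\mathcal V_n[\rho]\ge0$ is finite, we get $\operatorname{Ent}(\rho)<\infty$. In particular $\rho\ll\mathcal L^\dm$ with some density $f$. If $\Sigma$ were singular, there would be $\xi\neq0$ with $\int(\xi\cdot(x-m))^2\,\d\rho=0$. Then $\rho$ would be concentrated on an affine hyperplane, which is Lebesgue-null, contradicting absolute continuity. I also need $\operatorname{Ent}(\rho)>-\infty$. This follows from the standard bound: compare with any Gaussian and use Jensen, or finiteness of the second moment. So $\operatorname{Ent}(\rho)$ is a finite real number, and $G_\rho$ is a nondegenerate Gaussian with smooth positive density $g$.

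\textbf{Step 2: the entropy identity.} Write
\[
\log g(x)=-\tfrac12\log\bigl((2\pi)^\dm\det\Sigma\bigr)-\tfrac12(x-m)^\top\Sigma^{-1}(x-m).
\]
This is a quadratic polynomial, so it is integrable against both $\rho$ and $G_\rho$. Because $\rho$ and $G_\rho$ share mean and covariance,
\[
\int\log g\,\d\rho=\int\log g\,\d G_\rho=\operatorname{Ent}(G_\rho),
\]
since $\int(x-m)^\top\Sigma^{-1}(x-m)\,\d\rho=\operatorname{tr}(\Sigma^{-1}\Sigma)=\dm$ for both measures. Hence
\[
\mathcal H(\rho\mid G_\rho)=\int f\log f-\int f\log g=\operatorname{Ent}(\rho)-\operatorname{Ent}(G_\rho).
\]
The splitting is legitimate because both integrals are finite; this is the one point where care is needed. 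Writing $\int f\log(f/g)$, the negative part of $f\log(f/g)$ is controlled via $\int g\,(h\log h)_-$ with $h=f/g$. So the relative entropy is well defined in $[0,\infty]$, and the subtraction is valid once $\operatorname{Ent}(\rho)$ is finite. Combining with the equality of the interaction energies gives \eqref{eq:Gaussian-reduction-energy}.

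\textbf{Step 3: nonnegativity, equality case, and the consequence.} Jensen's inequality for the strictly convex function $h\mapsto h\log h$ under $G_\rho$ gives $\mathcal H(\rho\mid G_\rho)\ge0$. Equality holds iff $f/g\equiv1$ a.e., i.e.\ $\rho=G_\rho$. For the consequence, take a global minimizer $\rho$. Its energy is finite, because Gaussians have finite energy and so the infimum is $<\infty$. Then $\mathcal E[G_\rho]\le\mathcal E[\rho]$, and minimality forces equality. Hence $\rho=G_\rho$, which is nondegenerate by Step 1.

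\textbf{Expected obstacle.} The main obstacle is purely integrability bookkeeping: showing $\operatorname{Ent}(\rho)>-\infty$ for $\rho\in\mathcal P_2$, and justifying the splitting in Step 2. I would handle both simultaneously by applying the identity $\int f\log(f/g)=\int f\log f-\int f\log g$ with the quadratic $\log g$.
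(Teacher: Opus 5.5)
Your proposal is correct and follows essentially the same route as the paper. Both arguments use the hyperplane argument for positive definiteness, the cross-entropy identity $\int\log g\,\d\rho=\int\log g\,\d G_\rho$ (from the shared mean and covariance), the equality $\mathcal V_n[\rho]=\mathcal V_n[G_\rho]=c_ne_n(\Sigma)$, and Jensen for nonnegativity and the equality case; your extra integrability bookkeeping makes explicit points the paper leaves implicit, namely that $\operatorname{Ent}(\rho)>-\infty$ on $\mathcal P_2$, that the splitting of $\int f\log(f/g)$ is legitimate, and that a global minimizer has finite energy.
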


\begin{proof}
Since $\mathcal E[\rho]<\infty$, the definition of the entropy implies that
$\rho\ll\mathcal L^\dm$. Let $f:=\d\rho/\d\mathcal L^\dm$ denote its density. We first prove that $\Sigma$ is positive definite. If
there were a vector $\xi\neq0$ such that $\xi^\top\Sigma\xi=0$, then
\[
\int_{\bbr^\dm}|\xi\cdot(x-m)|^2\,\d\rho(x)=0.
\]
Hence $\rho$ would be concentrated on the hyperplane
$\{x:\xi\cdot(x-m)=0\}$, which has zero Lebesgue measure. This contradicts
$\rho\ll\mathcal L^\dm$ and $\rho(\bbr^\dm)=1$. Thus $\Sigma$ is positive
definite.

Let $g_\rho:=\d G_\rho/\d\mathcal L^\dm$ denote the density of $G_\rho$. Then
\[
g_\rho(x)
=\frac{1}{(2\pi)^{\dm/2}(\det\Sigma)^{1/2}}
\exp\left(-\frac12(x-m)^\top\Sigma^{-1}(x-m)\right).
\]
Taking logarithms gives
\begin{equation}
\label{eq:log-Gaussian-density}
\log g_\rho(x)
=-\frac{\dm}{2}\log(2\pi)
 -\frac12\log\det\Sigma
 -\frac12(x-m)^\top\Sigma^{-1}(x-m).
\end{equation}
The integral of the quadratic term in \eqref{eq:log-Gaussian-density} is
completely determined by the covariance matrix. Indeed,
\begin{align*}
&\int_{\bbr^\dm}
(x-m)^\top\Sigma^{-1}(x-m)\,\d\rho(x)\\
&\qquad
=\operatorname{tr}\left(
\Sigma^{-1}
\int_{\bbr^\dm}(x-m)(x-m)^\top\,\d\rho(x)
\right)
=\operatorname{tr}(\Sigma^{-1}\Sigma)
=\dm.
\end{align*}
Because $G_\rho$ has the same mean and covariance as $\rho$, the same
calculation gives
\[
\int_{\bbr^\dm}
(x-m)^\top\Sigma^{-1}(x-m)\,\d G_\rho(x)=\dm.
\]
Combining these identities with \eqref{eq:log-Gaussian-density}, we obtain
\begin{equation}
\label{eq:cross-entropy-Gaussian}
\int_{\bbr^\dm}\log g_\rho\,\d\rho
=\int_{\bbr^\dm}\log g_\rho\,\d G_\rho.
\end{equation}

Since $g_\rho$ is strictly positive, $\rho\ll G_\rho$ and
$\d\rho/\d G_\rho=f/g_\rho$. Using
\eqref{eq:cross-entropy-Gaussian}, we find
\begin{align*}
\mathcal H(\rho\mid G_\rho)
&=\int_{\bbr^\dm}\log\left(\frac{f}{g_\rho}\right)\,\d\rho\\
&=\int_{\bbr^\dm}f\log f\,\d x
  -\int_{\bbr^\dm}\log g_\rho\,\d\rho\\
&=\operatorname{Ent}(\rho)-\operatorname{Ent}(G_\rho).
\end{align*}
Moreover, the covariance formula recalled in Section~\ref{sec:preliminaries} gives
\[
\mathcal V_n[\rho]
=c_n e_n(\Sigma),
\qquad
\mathcal V_n[G_\rho]
=c_n e_n(\Sigma).
\]
Combining the entropy and interaction identities gives
\[
\mathcal E[\rho]-\mathcal E[G_\rho]
=\gamma\bigl(\operatorname{Ent}(\rho)-\operatorname{Ent}(G_\rho)\bigr)
=\gamma\mathcal H(\rho\mid G_\rho).
\]
Since $r\mapsto r\log r$ is strictly convex and $\int (d\rho/dG_\rho)\,dG_\rho=1$, Jensen's inequality gives $\mathcal H(\rho\mid G_\rho)\geq 0$, with equality if and only if $\rho=G_\rho$. Finally, if $\rho$ is a global minimizer, then
$\mathcal E[\rho]\leq\mathcal E[G_\rho]$, whereas
\eqref{eq:Gaussian-reduction-energy} gives the reverse inequality. Together these inequalities imply $\rho=G_\rho$.
\end{proof}

Now, it remains to minimize the energy over Gaussian measures.  Let
\[
 \Sigma=P^\top\operatorname{diag}(\lambda_1,\ldots,\lambda_\dm)P,
 \qquad \lambda_i>0,
\]
where $P$ is orthogonal, and let $G_{m,\Sigma}:=\mathcal N(m,\Sigma)$. Then
\begin{equation}
\label{eq:Gaussian-finite-dimensional-energy}
\mathcal E[G_{m,\Sigma}]
 =-\frac{\gamma\dm}{2}\log(2\pi e)
  -\frac{\gamma}{2}\log\left(\prod_{i=1}^\dm\lambda_i\right)
  +c_n e_n(\lambda_1,\ldots,\lambda_\dm).
\end{equation}
Thus the free-energy minimization reduces to a finite-dimensional problem for
the covariance eigenvalues.

\subsection{The case $n=\dm$}

When $n=\dm$,
\[
 e_\dm(\lambda_1,\ldots,\lambda_\dm)
 =\prod_{i=1}^\dm\lambda_i.
\]
Writing $p:=\det\Sigma=\prod_i\lambda_i$, the covariance-dependent part of
\eqref{eq:Gaussian-finite-dimensional-energy} becomes
\[
 h_\dm(p):=-\frac{\gamma}{2}\log p+c_\dm p,
 \qquad p>0.
\]
This function has the unique minimizer
\[
 p_\ast=\frac{\gamma}{2c_\dm}
 =\frac{\gamma\dm!}{2(\dm+1)}.
\]
Consequently, the global minimizers are precisely the Gaussian measures
\[
G_{m,P,\lambda}
:=\mathcal N\left(
 m,
 P^\top\operatorname{diag}(\lambda_1,\ldots,\lambda_\dm)P
\right),
\]
where $m\in\bbr^\dm$, $P$ is orthogonal, $\lambda_i>0$, and
\begin{equation}
\label{eq:minimizer-determinant-nd}
\prod_{i=1}^\dm\lambda_i
=\frac{\gamma\dm!}{2(\dm+1)}.
\end{equation}
Thus, in the critical case $n=\dm$, the energy determines the determinant of the covariance matrix but not its shape.

\subsection{The case $1\leq n<\dm$}

Assume $1\leq n<\dm$. By Maclaurin's inequality,
\begin{equation}
\label{eq:Maclaurin-minimizers}
 e_n(\lambda_1,\ldots,\lambda_\dm)
 \geq\binom{\dm}{n}
 \left(\prod_{i=1}^\dm\lambda_i\right)^{n/\dm},
\end{equation}
with equality if and only if
$\lambda_1=\cdots=\lambda_\dm$. Hence, with
$p:=\prod_i\lambda_i$, equation
\eqref{eq:Gaussian-finite-dimensional-energy} gives
\[
 \mathcal E[G_{m,\Sigma}]
 \geq-\frac{\gamma\dm}{2}\log(2\pi e)
 -\frac{\gamma}{2}\log p
 +c_n\binom{\dm}{n}p^{n/\dm}.
\]
Writing this lower bound as
\[
H(p)
:=-\frac{\gamma\dm}{2}\log(2\pi e)
-\frac{\gamma}{2}\log p
+c_n\binom{\dm}{n}p^{n/\dm},
\qquad p>0,
\]
we have $\mathcal E[G_{m,\Sigma}]\geq H(\det\Sigma)$, with equality precisely when all covariance eigenvalues are equal. The function $r\mapsto H(e^r)$ is strictly convex and coercive, and therefore has a unique minimizer $p_\ast>0$, characterized by
\[
p_\ast^{\,n/\dm}
=
\frac{\gamma}
{2c_n\binom{\dm-1}{n-1}}.
\]
At this minimizer, the isotropic covariance $\Sigma=\lambda_\ast I_\dm$ with $\lambda_\ast:=p_\ast^{1/\dm}$ then satisfies $\det\Sigma=p_\ast$ and attains equality in \eqref{eq:Maclaurin-minimizers}. Therefore,
\[
\mathcal E[G_{m,\lambda_\ast I_\dm}]
=H(p_\ast)
\leq
\mathcal E[G_{m,\Sigma}]
\]
for every positive definite covariance matrix $\Sigma$.

Conversely, a minimizing covariance must realize equality in both
\[
\mathcal E[G_{m,\Sigma}]
\geq H(\det\Sigma)
\geq H(p_\ast),
\]
and the second equality gives $\det\Sigma=p_\ast$, while equality in \eqref{eq:Maclaurin-minimizers} forces all covariance eigenvalues to be equal. Thus $\Sigma=\lambda_\ast I_\dm$, where
\begin{equation}
\label{eq:isotropic-minimizer-scale}
\lambda_\ast
=
\left(
\frac{\gamma n!}
{2(n+1)\binom{\dm-1}{n-1}}
\right)^{1/n}.
\end{equation}
Consequently, for each $m\in\bbr^\dm$, the unique global minimizer is the Gaussian measure
\begin{equation}
\label{eq:isotropic-global-minimizer}
G_{m,\lambda_\ast}
:=\mathcal N(m,\lambda_\ast I_\dm).
\end{equation}

\subsection{A volume-preserving balancing interpretation}
\label{subsec:balancing-minimizers}

The preceding argument classifies the minimizers. To explain the distinction between $n<\dm$ and $n=\dm$ geometrically, we consider volume-preserving linear deformations. For a Gaussian measure with covariance
eigenvalues $\lambda_1,\ldots,\lambda_\dm$, the entropy depends on the
covariance matrix only through
\[
 \det\Sigma=\prod_{k=1}^\dm\lambda_k.
\]
Fix $i\neq j$ and replace the two eigenvalues by their geometric mean.
\[
 \widetilde\lambda_i=\widetilde\lambda_j
 =\sqrt{\lambda_i\lambda_j},
 \qquad
 \widetilde\lambda_k=\lambda_k
 \quad\text{for }k\notin\{i,j\}.
\]
This transformation preserves the determinant and hence leaves the entropy
unchanged. Equivalently, in an eigenbasis of $\Sigma$, it is induced by a
linear map of determinant one.

Using the notation $e_k(\hat\lambda_i,\hat\lambda_j)$ introduced in Section~\ref{sec:preliminaries}, we have
\[
e_n(\lambda_1,\ldots,\lambda_\dm)
=e_n(\hat\lambda_i,\hat\lambda_j)
+(\lambda_i+\lambda_j)e_{n-1}(\hat\lambda_i,\hat\lambda_j)
+\lambda_i\lambda_j e_{n-2}(\hat\lambda_i,\hat\lambda_j).
\]
Since $\lambda_i\lambda_j$ is preserved,
\begin{equation}
\label{eq:pairwise-balancing-drop}
 \begin{aligned}
 &e_n(\lambda_1,\ldots,\lambda_\dm)
 -e_n(\widetilde\lambda_1,\ldots,\widetilde\lambda_\dm)=
 \left(\sqrt{\lambda_i}-\sqrt{\lambda_j}\right)^2
 e_{n-1}(\hat\lambda_i,\hat\lambda_j).
 \end{aligned}
\end{equation}
If $n<\dm$, then $n-1\leq\dm-2$ and the last elementary symmetric
polynomial is strictly positive. Every unequal pair can therefore be balanced
while preserving the entropy and strictly decreasing the interaction part $\mathcal V_n$.
Thus no anisotropic covariance can minimize the energy. By contrast, when
$n=\dm$, the interaction part $\mathcal V_n$ is proportional to
\[
 e_\dm(\lambda_1,\ldots,\lambda_\dm)
 =\prod_{k=1}^\dm\lambda_k,
\]
and is invariant under every determinant-preserving deformation. The energy
then fixes only the covariance volume and does not select its shape. This
two-eigenvalue balancing mechanism is analogous in spirit to symmetrization
arguments for isoperimetric problems. One preserves the relevant volume while
replacing an anisotropic configuration by a more symmetric one.

\begin{proposition}[Minimizers and stationary states]
\label{prop:minimizers-stationary}
A measure $\rho\in\mathcal P_2(\bbr^\dm)$ is a stationary weak solution of
\eqref{PDE} if and only if it is one of the Gaussian minimizers classified above.
\end{proposition}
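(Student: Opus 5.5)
We prove the two inclusions separately, relying on the well-posedness theory of Section~\ref{sec:wellposedness} and not on any gradient-flow characterization.

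\emph{Stationary states are minimizers.} Let $\rho\in\mathcal P_2(\bbr^\dm)$ be such that the constant curve $\rho_t\equiv\rho$ is a weak solution. By Theorem~\ref{thm:global-wellposedness-diffusion}, this curve is the unique solution issued from $\rho$, so it has the explicit representation. In particular, part (3) of that theorem shows that $\rho=\rho_t$ has a smooth strictly positive density. Since the covariance is constant in time, Proposition~\ref{prop:moment-equations-diffusion} gives the stationary covariance equation $4c_n\Sigma T_{n-1}(\Sigma)=2\gamma I_\dm$. After diagonalizing, this reads
\[
2c_n\lambda_i e_{n-1}(\hat\lambda_i)=\gamma
\qquad\text{for all }i.
\]
We now solve this algebraic system.
\begin{itemize}
\item If $n=\dm$, then $\lambda_i e_{\dm-1}(\hat\lambda_i)=\prod_k\lambda_k$. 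The system therefore reduces to the single condition $\det\Sigma=\gamma/(2c_\dm)=p_\ast$.
\item If $n<\dm$, subtract the equations for $i$ and $j$ and use the identity from the proof of Proposition~\ref{prop:global-reduced-diffusion}. This gives $(\lambda_i-\lambda_j)e_{n-1}(\hat\lambda_i,\hat\lambda_j)=0$. Because all $\lambda_k>0$ and $n-1\le \dm-2$, this elementary symmetric polynomial is strictly positive, so all eigenvalues coincide. The remaining equation $2c_n\binom{\dm-1}{n-1}\lambda^n=\gamma$ then forces $\lambda=\lambda_\ast$.
\end{itemize}

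It remains to show that $\rho$ is Gaussian. For a stationary covariance, $A:=2c_nT_{n-1}(\Sigma)$ is constant and positive definite, and $A\Sigma=\gamma I_\dm$. Hence $\Phi(t,0)=e^{-tA}\to0$, while $Q_t\to\Sigma$. Passing to the limit $t\to\infty$ in $\rho=\rho_t=(m+e^{-tA}(\cdot-m))_\#\rho*\mathcal N(0,Q_t)$ yields $\rho=\mathcal N(m,\Sigma)$. The limit can be justified either through characteristic functions, or in $W_2$ via the upper bound of Proposition~\ref{prop:W2-bounds-diffusion} with $\overline\Sigma=\Sigma$, since $\lambda_i^0r_i(t,0)^2\to0$ and $q_i(t)\to\lambda_i$. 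Comparing with the classification above shows that $\rho$ is one of the minimizers.

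\emph{Minimizers are stationary.} Let $\rho=\mathcal N(m,\Sigma)$ be a minimizer. By the classification, $\Sigma$ satisfies the stationary covariance equation, since in each case the first-order conditions coincide with the displayed system. Thus $\Sigma_t\equiv\Sigma$ and $A=\gamma\Sigma^{-1}$. We then check directly that the density $g$ is stationary. Because $\nabla g=-\Sigma^{-1}(x-m)g$, the flux $-A(x-m)g-\gamma\nabla g$ vanishes identically. Integrating by parts against test functions, which is legitimate thanks to Gaussian decay, gives the weak formulation for the constant curve.

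Alternatively, one can invoke uniqueness: the explicit solution starting from $\rho$ has $\Phi(t,0)=e^{-tA}$ and $Q_t=\Sigma-e^{-tA}\Sigma e^{-tA}$. Its law is therefore $\mathcal N\bigl(m,e^{-tA}\Sigma e^{-tA}+Q_t\bigr)=\mathcal N(m,\Sigma)$.

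The main obstacle is the Gaussianity step in the first direction. It requires passing to the limit in the representation formula, and the key inputs are that $A$ is positive definite, so that $\Phi(t,0)\to0$, and the identity $Q_\infty=\Sigma$, which follows from the Lyapunov equation $A Q+QA=2\gamma I_\dm$ having the unique solution $\gamma A^{-1}=\Sigma$.
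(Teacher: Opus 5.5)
Your proof is correct and follows essentially the same route as the paper. The moment equation gives $4c_n\Sigma T_{n-1}(\Sigma)=2\gamma I_\dm$, uniqueness from Theorem~\ref{thm:global-wellposedness-diffusion} identifies the constant curve with the explicit representation, and letting $t\to\infty$ with $e^{-tA}\to0$, $Q_t\to\Sigma$ forces $\rho=\mathcal N(m,\Sigma)$, while the converse follows from the vanishing flux $A(x-m)g+\gamma\nabla g=0$ with $A=\gamma\Sigma^{-1}$. The remaining differences are cosmetic: you solve the stationary eigenvalue system before proving Gaussianity rather than after, and you justify the limit via the coupling bound of Proposition~\ref{prop:W2-bounds-diffusion} (or characteristic functions) instead of the triangle inequality with the contraction $W_2(\mu*\eta,\nu*\eta)\leq W_2(\mu,\nu)$.
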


\begin{proof}
Let $\Sigma_\ast$ be any minimizing covariance classified above. If $n<\dm$, then $\Sigma_\ast=\lambda_\ast I_\dm$, and \eqref{eq:isotropic-minimizer-scale} gives
$2c_n\binom{\dm-1}{n-1}\lambda_\ast^n=\gamma$. In this case,
$T_{n-1}(\lambda_\ast I_\dm)=\binom{\dm-1}{n-1}\lambda_\ast^{n-1}I_\dm$. By contrast, if $n=\dm$, then
$T_{\dm-1}(\Sigma_\ast)=(\det\Sigma_\ast)\Sigma_\ast^{-1}$ and
\eqref{eq:minimizer-determinant-nd} gives $2c_\dm\det\Sigma_\ast=\gamma$. Hence, in both cases,
\[
2c_nT_{n-1}(\Sigma_\ast)=\gamma\Sigma_\ast^{-1}.
\]
If $g$ is the density of $\mathcal N(m,\Sigma_\ast)$, then
$\nabla g=-\Sigma_\ast^{-1}(x-m)g$. Combining this identity with the minimizing condition gives
\[
2c_nT_{n-1}(\Sigma_\ast)(x-m)g=-\gamma\nabla g,
\]
which shows directly that $\mathcal N(m,\Sigma_\ast)$ is stationary for \eqref{PDE}.

Conversely, let $\rho$ be a stationary weak solution, and set
$m:=m(\rho)$ and $\Sigma:=\Sigma(\rho)$. Proposition~\ref{prop:moment-equations-diffusion} gives
\[
4c_n\Sigma T_{n-1}(\Sigma)=2\gamma I_\dm.
\]
Thus $\Sigma$ is positive definite. Defining
$A:=2c_nT_{n-1}(\Sigma)$, the preceding identity becomes
$\Sigma A=\gamma I_\dm$, and hence $A=\gamma\Sigma^{-1}$.

Since $\rho$ is stationary, the constant curve $\rho_t\equiv\rho$ is a weak
solution with initial datum $\rho$. By the uniqueness statement in
Theorem~\ref{thm:global-wellposedness-diffusion}, this constant solution must
coincide with the explicit solution constructed there. Since the covariance is
constant,  $A_t\equiv A$ and the transition matrix satisfies
$\Phi(t,s)=e^{-A(t-s)}$. Therefore, for every $t\geq0$,
\[
\rho
=\bigl(m+e^{-At}(\cdot-m)\bigr)_\#\rho
*\mathcal N(0,Q_t),
\qquad
Q_t=2\gamma\int_0^t e^{-2A(t-s)}\,\d s.
\]
Using $A=\gamma\Sigma^{-1}$, we obtain
$Q_t=\gamma A^{-1}(I_\dm-e^{-2At})=\Sigma(I_\dm-e^{-2At})$.

The matrix $A$ is positive definite, and hence $e^{-At}\to0$ and
$Q_t\to\Sigma$ as $t\to\infty$. Since the only coupling of a probability
measure with $\delta_m$ sends all of its mass to $m$, its squared $W_2$
distance to $\delta_m$ equals its second moment about $m$. Applying this
observation to the affine pushforward gives
\[
\begin{aligned}
W_2^2\!\left(\bigl(m+e^{-At}(\cdot-m)\bigr)_\#\rho,\delta_m\right)
&=\int_{\bbr^d}|e^{-At}(x-m)|^2\,\d\rho(x)\\
&\leq \|e^{-At}\|_{\mathrm{op}}^2\operatorname{tr}\Sigma
\longrightarrow0.
\end{aligned}
\]
The Gaussian factor also converges in $W_2$ to $\mathcal N(0,\Sigma)$. Indeed,
if $G\sim\mathcal N(0,I_\dm)$, then $Q_t^{1/2}G$ and $\Sigma^{1/2}G$ give a
coupling whose quadratic cost tends to zero because $Q_t^{1/2}\to\Sigma^{1/2}$.
We also use the elementary fact that convolution with a fixed probability
measure does not increase the $W_2$ distance. To see this, let $(X,Y)$ be an
optimal coupling of $\mu$ and $\nu$, and let $Z$ be an independent random variable
with law $\eta$. Then $(X+Z,Y+Z)$ is a coupling of $\mu*\eta$ and $\nu*\eta$,
while $(X+Z)-(Y+Z)=X-Y$. It follows that
\[
W_2(\mu*\eta,\nu*\eta)\leq W_2(\mu,\nu)
\]
for every $\mu,\nu,\eta\in\mathcal P_2(\bbr^\dm)$. Writing
$\mu_t:=\bigl(m+e^{-At}(\cdot-m)\bigr)_\#\rho$,
$\nu_t:=\mathcal N(0,Q_t)$, and $\nu:=\mathcal N(0,\Sigma)$, the triangle
inequality and the preceding contraction estimate give
\[
\begin{aligned}
W_2(\mu_t*\nu_t,\delta_m*\nu)
&\leq W_2(\mu_t*\nu_t,\delta_m*\nu_t)
   +W_2(\delta_m*\nu_t,\delta_m*\nu)\\
&\leq W_2(\mu_t,\delta_m)+W_2(\nu_t,\nu)\longrightarrow0.
\end{aligned}
\]
Since $\mu_t*\nu_t=\rho$ for every $t$ and
$\delta_m*\nu=\mathcal N(m,\Sigma)$, we conclude that
$\rho=\mathcal N(m,\Sigma)$.

It remains to identify its covariance. The stationary eigenvalue equations are
\[
\lambda_i e_{n-1}(\hat\lambda_i)=\frac{\gamma}{2c_n},
\qquad 1\leq i\leq\dm.
\]
If $n<\dm$, subtraction of the equations for $i$ and $j$ gives
\[
(\lambda_i-\lambda_j)e_{n-1}(\hat\lambda_i,\hat\lambda_j)=0.
\]
Since all eigenvalues are positive and $n-1\leq\dm-2$, the last factor is positive.
Thus all eigenvalues are equal, and the stationary equation gives exactly
$\Sigma=\lambda_\ast I_\dm$ with $\lambda_\ast$ defined above. If $n=\dm$, then
$\lambda_i e_{\dm-1}(\hat\lambda_i)=\prod_k\lambda_k$. The stationary condition
is therefore precisely \eqref{eq:minimizer-determinant-nd}. In both cases, $\rho$ is one of the
global minimizers classified in this section.
\end{proof}

With the equilibrium set completely classified, we next determine which equilibrium is selected by the dynamics.

\section{Long-time behavior and convergence rates}
\label{sec:longtime}
\setcounter{equation}{0}

In this section, we study the long-time behavior of the reduced covariance system and then use the explicit representation from Section~\ref{sec:wellposedness} to recover the asymptotic behavior of the full measure-valued solution. 

For nonnegative functions $f$ and $g$, we write $f(t)\asymp g(t)$ as $t\to\infty$ if there exist constants $c,C>0$ and $t_0>0$ such that
\[
cg(t)\leq f(t)\leq Cg(t)
\qquad\text{for every }t\geq t_0.
\]
Whenever a uniform positive lower bound for the covariance eigenvalues is needed, we work from an arbitrary fixed time $t_0>0$, since Proposition~\ref{prop:global-reduced-diffusion} gives strict positivity of all covariance eigenvalues at every positive time.

Let
\[
\Sigma_0=P^\top\operatorname{diag}(\lambda_1^0,\ldots,\lambda_\dm^0)P,
\qquad
\lambda_1^0\geq\cdots\geq\lambda_\dm^0\geq0.
\]
By Proposition~\ref{prop:global-reduced-diffusion},
\[
\Sigma_t=P^\top\operatorname{diag}(\lambda_1(t),\ldots,\lambda_\dm(t))P.
\]
Accordingly, the corresponding eigenvalues satisfy
\begin{equation}
\label{eq:eigenvalue-long-time}
\frac{\d}{\d t}\lambda_i(t)
=-4c_n\lambda_i(t)e_{n-1}(\hat\lambda_i(t))+2\gamma.
\end{equation}
Moreover, the ordering is preserved and all eigenvalues are strictly positive for $t>0$.

\subsection{Long-time behavior of the covariance matrix}

The covariance dynamics differ qualitatively between the full-dimensional case $n=\dm$ and the subcritical case $1\leq n<\dm$.

\subsubsection{The case $n=\dm$}

When $n=\dm$, the quantity $\lambda_i e_{\dm-1}(\hat\lambda_i)$ is the full product of the eigenvalues and is independent of $i$. Thus all eigenvalues have the same derivative.

\begin{proposition}[The full-dimensional case]
\label{prop:covariance-nd}
Assume $n=\dm$. There exists a unique number
\[
\overline\mu\in\left(-\min_{1\leq i\leq\dm}\lambda_i^0,\infty\right)
\]
such that
\begin{equation}
\label{eq:mu-equilibrium}
\prod_{i=1}^\dm(\lambda_i^0+\overline\mu)
=\frac{\gamma}{2c_\dm}
=\frac{\gamma\dm!}{2(\dm+1)}.
\end{equation}
The eigenvalues converge according to
\[
\lambda_i(t)\longrightarrow\lambda_i^\infty:=\lambda_i^0+\overline\mu
\qquad
\text{as }t\to\infty,
\]
and the convergence is exponential. More precisely, if $\overline\mu\neq0$, then there exist constants $c,C>0$ such that
\begin{equation}
\label{eq:mu-two-sided-rate}
c e^{-\alpha_\dm t}
\leq |\lambda_i(t)-\lambda_i^\infty|
\leq C e^{-\alpha_\dm t}
\qquad
\text{for every }t\geq0.
\end{equation}
Consequently, the logarithmic decay rate is
\begin{equation}
\label{eq:mu-log-rate}
\lim_{t\to\infty}\frac{1}{t}
\log|\lambda_i(t)-\lambda_i^\infty|
=-\alpha_\dm.
\end{equation}
The decay exponent in these estimates is
\begin{equation}
\label{eq:alpha-d}
\alpha_\dm
:=4c_\dm\sum_{j=1}^\dm\prod_{k\neq j}\lambda_k^\infty
=2\gamma\sum_{j=1}^\dm\frac{1}{\lambda_j^\infty}>0.
\end{equation}
If $\overline\mu=0$, then $\Sigma_t=\Sigma_0$ for every $t\geq0$.
\end{proposition}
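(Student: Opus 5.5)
The plan is to reduce the eigenvalue system \eqref{eq:eigenvalue-long-time} to a single scalar ODE. When $n=\dm$ we have $\lambda_i e_{\dm-1}(\hat\lambda_i)=\prod_k\lambda_k=:p(t)$ for every $i$. Hence $\lambda_i'(t)=-4c_\dm p(t)+2\gamma$ is the same for all $i$, and so $\lambda_i(t)=\lambda_i^0+\mu(t)$ with a common shift $\mu(t)$, $\mu(0)=0$. This is consistent with \eqref{eq:eigenvalue-difference-diffusion}, since $e_{\dm-1}(\hat\lambda_i,\hat\lambda_j)=0$ when $n=\dm$. The shift then solves the autonomous scalar equation
\[
\mu'(t)=F(\mu(t)),\qquad F(\mu):=2\gamma-4c_\dm\prod_{i=1}^\dm(\lambda_i^0+\mu),
\]
on the interval $I:=(-\min_i\lambda_i^0,\infty)$. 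If $\min_i\lambda_i^0=0$, then $\mu(t)>0$ for $t>0$, because $\lambda_\dm(t)>0$ by Proposition~\ref{prop:global-reduced-diffusion}.

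Next I establish existence and uniqueness of $\overline\mu$. On $I$ the map $\mu\mapsto\prod_i(\lambda_i^0+\mu)$ is continuous and strictly increasing, since each factor is positive and increasing. It tends to $0$ at the left endpoint and to $\infty$ as $\mu\to\infty$. The intermediate value theorem therefore gives a unique $\overline\mu\in I$ solving \eqref{eq:mu-equilibrium}, and $F$ is strictly decreasing on $I$ with $F>0$ left of $\overline\mu$ and $F<0$ right of it. Standard one-dimensional phase-line analysis then shows that $\mu(t)$ is monotone and stays strictly on one side of $\overline\mu$, because the constant solution cannot be crossed by uniqueness. It converges to $\overline\mu$, and if $\overline\mu=0$ then $\mu\equiv0$, giving $\Sigma_t=\Sigma_0$.

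For the rate, write $\eta(t):=\mu(t)-\overline\mu$, so that $\eta'=F(\mu)-F(\overline\mu)=-k(t)\eta$ with
\[
k(t):=\int_0^1 -F'(\overline\mu+s\eta(t))\,\d s,\qquad -F'(\mu)=4c_\dm\sum_j\prod_{k\neq j}(\lambda_k^0+\mu).
\]
Thus $\eta(t)=\eta(0)\exp(-\int_0^t k)$. Since $-F'(\overline\mu)=4c_\dm\sum_j\prod_{k\neq j}\lambda_k^\infty=\alpha_\dm$, the second expression in \eqref{eq:alpha-d} follows from $\prod_{k\neq j}\lambda_k^\infty=p_\ast/\lambda_j^\infty$ and $4c_\dm p_\ast=2\gamma$.

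The main obstacle is obtaining two-sided bounds with the exact exponent $\alpha_\dm$ for all $t\ge0$, rather than just the logarithmic rate. I would show that $\int_0^\infty|k(s)-\alpha_\dm|\,\d s<\infty$. Since $-F'$ is a polynomial and $\mu$ stays in a compact subinterval of $I\cup\{\text{left endpoint}\}$ determined by $\mu(0)$ and $\overline\mu$, we have $|k(t)-\alpha_\dm|\le L|\eta(t)|$. Moreover $k(t)\ge\kappa>0$ along the trajectory. This holds because $k$ is an average of $-F'$ over the segment between $\mu(t)$ and $\overline\mu$, and $-F'>0$ on $I$ (and is positive in the limit at the left endpoint when $\dm\ge2$ and at most one $\lambda_i^0$ vanishes, or otherwise at least near $\overline\mu$ after a short time). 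So $|\eta|$ first decays exponentially with some rate, hence is integrable, and the integral of $|k-\alpha_\dm|$ is finite. Writing
\[
\int_0^t k=\alpha_\dm t+\int_0^t(k-\alpha_\dm),
\]
with the last integral bounded uniformly in $t$, gives $|\eta(t)|=|\eta(0)|e^{-\alpha_\dm t}e^{O(1)}$. This yields \eqref{eq:mu-two-sided-rate} when $\overline\mu\neq0$, and \eqref{eq:mu-log-rate} follows immediately. Degenerate initial data with zero eigenvalues only affect an initial time window, which I handle by working from $t_0>0$ and absorbing $[0,t_0]$ into the constants.
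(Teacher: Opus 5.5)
Your proposal is correct and follows essentially the same route as the paper: the common shift $\mu(t)$ solving $\mu'=f(\mu)$, a unique zero $\overline\mu$ by strict monotonicity, phase-line convergence, and the two-sided bound from $\eta(t)=\eta(0)\exp\bigl(-\int_0^t k\bigr)$ with $|k-\alpha_\dm|\leq C|\mu-\overline\mu|\in L^1(0,\infty)$, where your averaged derivative $k$ coincides with the paper's difference quotient $b$. The only difference is that the paper gets the preliminary exponential decay directly from $b(t)\to\alpha_\dm$, so your uniform lower bound $k\geq\kappa>0$ and the discussion of the left endpoint are unnecessary, though harmless.
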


\begin{proof}
Since the right-hand side of \eqref{eq:eigenvalue-long-time} is the same for every index,
\[
\frac{\d}{\d t}(\lambda_i-\lambda_j)=0.
\]
Hence there exists a scalar function $\mu(t)$ with $\mu(0)=0$ such that
\[
\lambda_i(t)=\lambda_i^0+\mu(t),
\qquad
1\leq i\leq\dm.
\]
Substitution into \eqref{eq:eigenvalue-long-time} gives
\[
\mu'(t)=f(\mu(t)),
\qquad
f(\mu):=2\gamma-4c_\dm\prod_{i=1}^\dm(\lambda_i^0+\mu).
\]
On the interval $(-\min_i\lambda_i^0,\infty)$, the function $f$ is strictly decreasing. Moreover,
\[
\lim_{\mu\downarrow-\min_i\lambda_i^0}f(\mu)=2\gamma>0,
\qquad
\lim_{\mu\to\infty}f(\mu)=-\infty.
\]
Thus $f$ has a unique zero $\overline\mu$, characterized by \eqref{eq:mu-equilibrium}. Since the sign of $f(\mu)$ is positive below $\overline\mu$ and negative above it, the solution $\mu(t)$ is monotone toward $\overline\mu$ and therefore converges to it.

The derivative of $f$ at the equilibrium is
\[
f'(\overline\mu)
=-4c_\dm\sum_{j=1}^\dm\prod_{k\neq j}(\lambda_k^0+\overline\mu)
=-\alpha_\dm<0.
\]
If $\overline\mu\neq0$, then $\mu(t)-\overline\mu$ never changes sign. By the mean value theorem,
\[
\frac{\d}{\d t}\log|\mu(t)-\overline\mu|
=\frac{f(\mu(t))-f(\overline\mu)}{\mu(t)-\overline\mu}
\longrightarrow f'(\overline\mu)=-\alpha_\dm.
\]
This proves \eqref{eq:mu-log-rate} and, in particular, gives an exponential upper bound for $|\mu(t)-\overline\mu|$. To obtain the two-sided estimate with the exact exponent, define
\[
b(t):=-\frac{f(\mu(t))-f(\overline\mu)}{\mu(t)-\overline\mu}.
\]
Then $b(t)\to\alpha_\dm$. Since $f\in C^2$ near $\overline\mu$ and $\mu(t)-\overline\mu$ already decays exponentially,
\[
|b(t)-\alpha_\dm|\leq C|\mu(t)-\overline\mu|\in L^1(0,\infty).
\]
The scalar equation for $\mu(t)-\overline\mu$ can therefore be integrated explicitly:
\[
|\mu(t)-\overline\mu|
=|\mu(0)-\overline\mu|
\exp\left(-\int_0^t b(s)\,\d s\right)
\asymp e^{-\alpha_\dm t}.
\]
This proves \eqref{eq:mu-two-sided-rate}. Finally, using
\[
\prod_{i=1}^\dm\lambda_i^\infty=\frac{\gamma}{2c_\dm},
\]
we obtain the second expression in \eqref{eq:alpha-d}. If $\overline\mu=0$, then $f(0)=0$ and uniqueness of the scalar ODE gives $\mu(t)=0$ for all $t$.
\end{proof}

The identity $\lambda_i(t)-\lambda_j(t)=\lambda_i^0-\lambda_j^0$ has a direct
dynamical interpretation. In the full-dimensional case, all covariance eigenvalues are
shifted by the same scalar amount. Consequently, the spectral gaps are
preserved throughout the evolution, and the initially anisotropic covariance remains anisotropic. The diffusion adjusts the covariance volume to the
minimizing determinant without selecting an isotropic shape. This contrasts with the case
$n<\dm$, where the eigenvalue differences decay to zero and the covariance becomes
asymptotically isotropic.

\subsubsection{The case $1\leq n<\dm$}

We first establish uniform positive lower and upper bounds for the eigenvalues. These bounds, in turn, show that the synchronization mechanism in \eqref{eq:eigenvalue-difference-diffusion} is uniformly effective for large time.

\begin{lemma}[Uniform bounds and synchronization]
\label{lem:uniform-bounds-synchronization}
Assume $1\leq n<\dm$ and fix $t_0>0$. There exist constants $0<L\leq U<\infty$ such that
\[
L\leq\lambda_i(t)\leq U
\qquad
\text{for every }t\geq t_0
\text{ and }1\leq i\leq\dm.
\]
Moreover, there exist constants $C>0$ and $\beta>0$ such that
\begin{equation}
\label{eq:synchronization-upper}
\max_{1\leq i,j\leq\dm}|\lambda_i(t)-\lambda_j(t)|
\leq Ce^{-\beta t}
\qquad
\text{for every }t\geq0.
\end{equation}
\end{lemma}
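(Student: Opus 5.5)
We will obtain the bounds from the eigenvalue system \eqref{eq:eigenvalue-long-time} together with the ordering preservation in Proposition~\ref{prop:global-reduced-diffusion}. Throughout, the eigenvalues stay ordered, $\lambda_1(t)\geq\cdots\geq\lambda_\dm(t)$, and all are positive for $t>0$.

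\emph{Step 1: upper bound.} Use the largest eigenvalue $\lambda_1$. Since $n-1\leq\dm-2$, the polynomial $e_{n-1}(\hat\lambda_1)$ contains the monomial $\lambda_2\cdots\lambda_n$, which is at least $\lambda_n^{n-1}$. Thus $e_{n-1}(\hat\lambda_1)\ge \lambda_\dm^{n-1}$, but this only helps once a lower bound is known. We therefore first bound the trace, or more simply a symmetric function of the eigenvalues.

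Set $S=\operatorname{tr}\Sigma_t=\sum_i\lambda_i$. Then
\[
S'=2\gamma\dm-4c_n\,n\,e_n(\lambda).
\]
Maclaurin's inequality bounds $e_n$ below by a power of $\det\Sigma$, not of $S$, so this alone does not close. We use instead the monotone quantity $\log\det\Sigma_t=\sum_i\log\lambda_i$, whose derivative is
\[
\sum_i\Bigl(\frac{2\gamma}{\lambda_i}-4c_ne_{n-1}(\hat\lambda_i)\Bigr).
\]

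A cleaner route is to work with $\lambda_\dm$ and $\lambda_1$ directly, in two sub-steps.
\begin{enumerate}
\item[(a)] \emph{Lower bound.} At any time where $\lambda_\dm(t)$ is small, $\lambda_\dm e_{n-1}(\hat\lambda_\dm)$ is at most $\lambda_\dm\binom{\dm-1}{n-1}\lambda_1^{n-1}$. This forces $\lambda_\dm'>0$ as long as $\lambda_1$ is bounded.
\item[(b)] \emph{Upper bound.} For $\lambda_1$ large, $\lambda_1e_{n-1}(\hat\lambda_1)\geq\lambda_1\lambda_2\cdots\lambda_n$. This is large only if $\lambda_2,\dots,\lambda_n$ are not small.
\end{enumerate}

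Because the two sub-steps are coupled, we close the argument by a barrier or invariant-region argument on the ordered simplex. Consider the box $[L,U]^\dm$ with $L$ small and $U$ large, chosen so that $U\ge\max_i\lambda_i(t_0)$ and $L\le\min_i\lambda_i(t_0)$. We check that the vector field points inward on its faces, and we use the ordering to handle the interaction.

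On the face $\lambda_i=L$, the drift is at least $2\gamma-4c_nL\binom{\dm-1}{n-1}U^{n-1}>0$ if $L\ll U^{1-n}$.

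On the face $\lambda_1=U$, we need $U\lambda_2\cdots\lambda_n$ to be large, and this fails if the other eigenvalues sit near $L$. Here the difference formula \eqref{eq:eigenvalue-difference-diffusion} helps. The gaps $\lambda_1-\lambda_j$ are nonincreasing, since they have the form $(\lambda_1^0-\lambda_j^0)\exp(-\cdots)$ with a nonnegative exponent. Hence $\lambda_1(t)\le\lambda_j(t)+(\lambda_1^0-\lambda_\dm^0)$ for all $j$. So if $\lambda_1=U$ with $U$ much larger than the initial spread $D:=\lambda_1^0-\lambda_\dm^0$, every $\lambda_j\ge U-D$. The drift of $\lambda_1$ is then at most $2\gamma-4c_nU(U-D)^{n-1}<0$. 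This gives the upper bound $\lambda_1(t)\le U$ for all $t\geq0$, and it works even for $n=1$, where $e_0=1$ and the drift is trivially negative. The lower bound then follows from sub-step (a) on $[t_0,\infty)$ via the face argument, using $\lambda_\dm(t_0)>0$.

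\emph{Step 2: synchronization.} On $[t_0,\infty)$, every summand of $e_{n-1}(\hat\lambda_i,\hat\lambda_j)$ is at least $L^{n-1}$, and there is at least one such summand because $n-1\le \dm-2$. Hence $e_{n-1}(\hat\lambda_i,\hat\lambda_j)\ge L^{n-1}$. Then \eqref{eq:eigenvalue-difference-diffusion} gives
\[
|\lambda_i(t)-\lambda_j(t)|\le D\,e^{-4c_nL^{n-1}(t-t_0)}
\]
for $t\ge t_0$, using that the integrand is nonnegative on $[0,t_0]$. Taking $\beta=4c_nL^{n-1}$ and enlarging $C$ to cover $[0,t_0]$ yields \eqref{eq:synchronization-upper}.

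The main obstacle is the upper bound in Step 1, since the damping on $\lambda_1$ degenerates when the smaller eigenvalues are small. The key idea is to use the nonincreasing spectral gaps from \eqref{eq:eigenvalue-difference-diffusion} to tie all eigenvalues to $\lambda_1$.
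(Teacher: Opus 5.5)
Your proof is correct and follows essentially the paper's route: nonincreasing spectral gaps from \eqref{eq:eigenvalue-difference-diffusion} give a uniform upper bound $U$; the estimate $e_{n-1}(\hat\lambda_\dm)\le\binom{\dm-1}{n-1}U^{n-1}$ then gives the lower bound $L$ by comparison; and $e_{n-1}(\hat\lambda_i,\hat\lambda_j)\ge L^{n-1}$ in the gap formula gives synchronization. The only difference is in the upper bound. The paper first bounds the smallest eigenvalue through the closed inequality $\lambda_\dm'\le 2\gamma-4c_n\binom{\dm-1}{n-1}\lambda_\dm^n$ and then transfers that bound upward with the gaps, whereas you run a barrier on $\lambda_1$ and let the gaps supply its damping (take $U\ge\lambda_1(0)$, or start the barrier at $t_0$, so the bound holds on the stated time range).
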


\begin{proof}
By Proposition~\ref{prop:global-reduced-diffusion}, all eigenvalues are positive at time $t=t_0$. We therefore use $t_0$ as a new initial time. Since
\[
\lambda_1(t)\geq\cdots\geq\lambda_\dm(t)>0,
\]
we have
\[
e_{n-1}(\hat\lambda_\dm(t))
\geq\binom{\dm-1}{n-1}\lambda_\dm(t)^{n-1}.
\]
This gives the differential inequality
\[
\lambda_\dm'(t)
\leq-4c_n\binom{\dm-1}{n-1}\lambda_\dm(t)^n+2\gamma.
\]
Comparison with the scalar ODE
\[
z'=-4c_n\binom{\dm-1}{n-1}z^n+2\gamma
\]
gives an upper bound for $\lambda_\dm(t)$. On the other hand, \eqref{eq:eigenvalue-difference-diffusion} shows that each nonnegative difference $\lambda_i(t)-\lambda_\dm(t)$ is nonincreasing, which yields
\[
\lambda_i(t)
\leq\lambda_\dm(t)+\lambda_i(t_0)-\lambda_\dm(t_0),
\]
and all eigenvalues are bounded above by a common constant $U$.

Using this upper bound, we estimate
\[
e_{n-1}(\hat\lambda_\dm(t))
\leq\binom{\dm-1}{n-1}U^{n-1}.
\]
The resulting differential inequality is
\[
\lambda_\dm'(t)
\geq-4c_n\binom{\dm-1}{n-1}U^{n-1}\lambda_\dm(t)+2\gamma.
\]
Comparison with the corresponding linear equation yields a constant $L>0$ such that $\lambda_\dm(t)\geq L$ for every $t\geq t_0$. The ordering then transfers the same lower bound to all eigenvalues.

Since $n<\dm$, the polynomial $e_{n-1}(\hat\lambda_i,\hat\lambda_j)$ has degree $n-1$ in $\dm-2$ variables. The uniform bounds imply
\[
\binom{\dm-2}{n-1}L^{n-1}
\leq e_{n-1}(\hat\lambda_i(t),\hat\lambda_j(t))
\leq\binom{\dm-2}{n-1}U^{n-1}
\]
for every $t\geq t_0$. Substitution into \eqref{eq:eigenvalue-difference-diffusion} gives \eqref{eq:synchronization-upper}.
\end{proof}

With these uniform bounds in hand, we identify the common limiting eigenvalue and prove exponential convergence of the covariance matrix.

\begin{proposition}[The subcritical case]
\label{prop:covariance-nlessd}
Assume $1\leq n<\dm$, and let $\lambda_\ast$ be the minimizing covariance scale defined in \eqref{eq:isotropic-minimizer-scale}. Then
\[
\lambda_i(t)\longrightarrow\lambda_\ast
\qquad
\text{for every }1\leq i\leq\dm.
\]
Furthermore, there exist constants $C,c>0$ such that
\begin{equation}
\label{eq:covariance-upper-nlessd}
\max_{1\leq i\leq\dm}|\lambda_i(t)-\lambda_\ast|
\leq Ce^{-ct}
\qquad
\text{for every }t\geq0.
\end{equation}
\end{proposition}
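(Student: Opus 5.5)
The plan is to combine the synchronization estimate \eqref{eq:synchronization-upper} from Lemma~\ref{lem:uniform-bounds-synchronization} with a scalar analysis of a single representative eigenvalue. For $t\ge t_0$, write $\lambda_i(t)=\lambda_\dm(t)+\delta_i(t)$ with $0\le\delta_i(t)\le Ce^{-\beta t}$. Since $e_{n-1}$ is a polynomial and all eigenvalues lie in $[L,U]$, the uniform bounds give
\[
\lambda_\dm e_{n-1}(\hat\lambda_\dm)=\binom{\dm-1}{n-1}\lambda_\dm^n+R(t),
\qquad |R(t)|\le C'e^{-\beta t},
\]
where the constant $C'$ depends only on $U$ and $\dm,n$. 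Hence the smallest eigenvalue satisfies the perturbed scalar equation
\[
\lambda_\dm'=g(\lambda_\dm)+\widetilde R(t),\qquad g(z):=2\gamma-4c_n\binom{\dm-1}{n-1}z^n,\qquad |\widetilde R(t)|\le 4c_nC'e^{-\beta t}.
\]

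The function $g$ is strictly decreasing on $(0,\infty)$. By \eqref{eq:isotropic-minimizer-scale}, $2c_n\binom{\dm-1}{n-1}\lambda_\ast^n=\gamma$, so the unique zero of $g$ is $\lambda_\ast$. On $[L,U]$ the mean value theorem gives $(g(z)-g(\lambda_\ast))/(z-\lambda_\ast)\le -\kappa$ with $\kappa:=4c_nn\binom{\dm-1}{n-1}\min(L,\lambda_\ast)^{n-1}>0$. Set $w:=\lambda_\dm-\lambda_\ast$. Then
\[
\tfrac{\d}{\d t}w^2=2w\,(g(\lambda_\dm)-g(\lambda_\ast))+2w\widetilde R\le -2\kappa w^2+2|w||\widetilde R|.
\]
Working with $|w|$ in the sense of Dini derivatives, or with $\sqrt{w^2+\varepsilon}$ and letting $\varepsilon\to0$, this yields $\tfrac{\d}{\d t}|w|\le-\kappa|w|+|\widetilde R|$. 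Gronwall then gives
\[
|w(t)|\le e^{-\kappa(t-t_0)}|w(t_0)|+\int_{t_0}^t e^{-\kappa(t-s)}C''e^{-\beta s}\,\d s\le C e^{-\min(\kappa,\beta')t}
\]
for any $\beta'<\beta$, which is needed only if $\kappa=\beta$. The triangle inequality with \eqref{eq:synchronization-upper} transfers the bound to every $\lambda_i$.

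On $[0,t_0]$ all eigenvalues are bounded by $\lambda_i^0+2\gamma t_0$ (Proposition~\ref{prop:global-reduced-diffusion}). Enlarging $C$ therefore makes \eqref{eq:covariance-upper-nlessd} hold for all $t\ge0$, and convergence $\lambda_i\to\lambda_\ast$ follows.

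The main obstacle I expect is controlling the remainder $R(t)$ uniformly. This requires expanding $e_{n-1}(\hat\lambda_\dm)$ around the point where all other eigenvalues equal $\lambda_\dm$, using Lipschitz continuity of $e_{n-1}$ on the box $[L,U]^{\dm-1}$. The uniform bounds of Lemma~\ref{lem:uniform-bounds-synchronization} are exactly what makes this Lipschitz constant finite. The remaining bookkeeping of the rate $\min(\kappa,\beta)$ is routine.
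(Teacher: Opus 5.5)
Your proof is correct, but it takes a different route from the paper's. The paper follows the mean eigenvalue $\overline\lambda=\frac1d\sum_i\lambda_i$. The identity $\sum_i\lambda_ie_{n-1}(\hat\lambda_i)=ne_n$ gives \eqref{eq:average-eigenvalue}, and a trapping argument first shows $\overline\lambda\to\lambda_\ast$ qualitatively. Only then is $F$ linearized near $\lambda_\ast$, so the coefficient $a(t)$ is bounded below only for large $t$. The forcing is controlled by the second-order estimate \eqref{eq:en-quadratic-remainder}, which is $O(D(t)^2)$ because the deviations $\delta_i$ sum to zero.

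You instead follow the smallest eigenvalue and accept a first-order Lipschitz remainder of size $O(e^{-\beta t})$. You then use the one-sided Lipschitz bound $\kappa=4c_nn\binom{d-1}{n-1}\min(L,\lambda_\ast)^{n-1}$, which holds on all of $[L,\infty)$ rather than only near $\lambda_\ast$. This yields exponential decay in a single Gronwall step, with the explicit rate $\min(\kappa,\beta')$, and makes the separate qualitative convergence step unnecessary. Your version is shorter and gives an explicit rate in terms of $L$, $\lambda_\ast$ and $\beta$. The paper's version decouples the mean from the traceless deviations to first order, and it is reused almost verbatim in Corollary~\ref{cor:sharp-covariance-rates} to separate the isotropic rate $\alpha_\parallel$ from the anisotropic rate $\alpha_\perp$.

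A minor bookkeeping remark: your constant $C'$ also depends on the synchronization constant in \eqref{eq:synchronization-upper}, not only on $U$, $d$ and $n$. The Lipschitz bound for $e_{n-1}$ needs only the upper bound $U$; the lower bound $L$ enters only through $\kappa$.
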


\begin{proof}
Introduce the mean eigenvalue by
\[
\overline\lambda(t):=\frac{1}{\dm}\sum_{i=1}^\dm\lambda_i(t).
\]
Averaging \eqref{eq:eigenvalue-long-time} and using
\[
\sum_{i=1}^\dm\lambda_i e_{n-1}(\hat\lambda_i)=ne_n(\lambda_1,\ldots,\lambda_\dm)
\]
gives
\begin{equation}
\label{eq:average-eigenvalue}
\overline\lambda'(t)
=2\gamma-\frac{4c_n n}{\dm}e_n(\lambda_1(t),\ldots,\lambda_\dm(t)).
\end{equation}
Fix $t_0$ and apply Lemma \ref{lem:uniform-bounds-synchronization} from that time onward. The lemma gives
\[
D(t):=\max_i|\lambda_i(t)-\overline\lambda(t)|
\longrightarrow0
\]
exponentially, while $\overline\lambda(t)\geq L$ for every
$t\geq t_0$. Hence, for all sufficiently large $t$,
\[
\overline\lambda(t)+D(t)\geq \lambda_i(t)\geq \overline\lambda(t)-D(t)>0.
\]
Since $e_n$ is nondecreasing in each variable on
$[0,\infty)^\dm$, we obtain
\[
\binom{\dm}{n}
\bigl(\overline\lambda(t)-D(t)\bigr)^n
\leq
e_n\bigl(\lambda_1(t),\ldots,\lambda_\dm(t)\bigr)
\leq
\binom{\dm}{n}
\bigl(\overline\lambda(t)+D(t)\bigr)^n.
\]
The identity $\frac{n}{\dm}\binom{\dm}{n}=\binom{\dm-1}{n-1}$ together with \eqref{eq:average-eigenvalue} shows that $\overline\lambda$ is an asymptotically vanishing perturbation of the scalar dynamics generated by
\[
F(r):=2\gamma-4c_n\binom{\dm-1}{n-1}r^n.
\]
The function $F$ is strictly decreasing on $(0,\infty)$ and has the unique zero $\lambda_\ast$, since \eqref{eq:isotropic-minimizer-scale} is equivalent to
$2\gamma=4c_n\binom{\dm-1}{n-1}\lambda_\ast^n$.

To prove convergence, fix $\varepsilon\in(0,\lambda_\ast)$. Since $D(t)\to0$, there exists $T_\varepsilon>0$ such that $D(t)\leq\varepsilon/2$ for every $t\geq T_\varepsilon$. The preceding bounds and the monotonicity of $F$ then give
\[
\begin{aligned}
\overline\lambda(t)\geq\lambda_\ast+\varepsilon
&\quad\Longrightarrow\quad
\overline\lambda'(t)\leq F(\lambda_\ast+\varepsilon/2)<0,\\
\overline\lambda(t)\leq\lambda_\ast-\varepsilon
&\quad\Longrightarrow\quad
\overline\lambda'(t)\geq F(\lambda_\ast-\varepsilon/2)>0.
\end{aligned}
\]
These inequalities show that $\overline\lambda$ reaches the interval
$[\lambda_\ast-\varepsilon,\lambda_\ast+\varepsilon]$ in finite time and cannot leave it afterward. As $\varepsilon>0$ is arbitrary, it follows that $\overline\lambda(t)\to\lambda_\ast$. Together with $D(t)\to0$, this gives $\lambda_i(t)\to\lambda_\ast$ for every $i$.

For the exponential upper bound, write
\[
\delta_i(t):=\lambda_i(t)-\overline\lambda(t),
\qquad
\sum_{i=1}^\dm\delta_i(t)=0.
\]
On the compact box $[L,U]^\dm$, the polynomial $e_n$ has bounded second derivatives. At the isotropic point $(\overline\lambda,\ldots,\overline\lambda)$,
\[
\partial_i e_n(\overline\lambda,\ldots,\overline\lambda)
=\binom{\dm-1}{n-1}\overline\lambda^{n-1}
\]
for every $i$. Because $\sum_{i=1}^\dm\delta_i=0$, the linear term in the Taylor expansion in the direction $(\delta_1,\ldots,\delta_\dm)$ vanishes:
\[
\binom{\dm-1}{n-1}\overline\lambda^{n-1}
\sum_{i=1}^\dm\delta_i=0.
\]
Consequently, the second-order Taylor remainder satisfies
\begin{equation}
\label{eq:en-quadratic-remainder}
\left|
e_n(\lambda_1(t),\ldots,\lambda_\dm(t))
-\binom{\dm}{n}\overline\lambda(t)^n
\right|
\leq C D(t)^2.
\end{equation}
Set $y(t):=\overline\lambda(t)-\lambda_\ast$. Combining \eqref{eq:average-eigenvalue} and \eqref{eq:en-quadratic-remainder}, we obtain
\[
y'(t)=F(\overline\lambda(t))+R(t),
\qquad
|R(t)|\leq Ce^{-2\beta t}
\]
where $\beta>0$ is the synchronization rate in
\eqref{eq:synchronization-upper}. The linearization of $F$ at $\lambda_\ast$ satisfies
\[
F'(\lambda_\ast)
=
-4c_n n\binom{\dm-1}{n-1}\lambda_\ast^{\,n-1}<0.
\]
With $F(\lambda_\ast)=0$, define
\[
a(t)
:=
\begin{cases}
-\dfrac{F(\overline\lambda(t))-F(\lambda_\ast)}
{\overline\lambda(t)-\lambda_\ast},
& \overline\lambda(t)\neq\lambda_\ast,\\[3mm]
-F'(\lambda_\ast),
& \overline\lambda(t)=\lambda_\ast.
\end{cases}
\]
The mean value theorem, together with $\overline\lambda(t)\to\lambda_\ast$, therefore gives $a(t)\to -F'(\lambda_\ast)>0$. Consequently, there exist $a_0,a_1>0$ and $T_0>0$ such that
\[
F(\overline\lambda(t))=-a(t)y(t),
\qquad
a_0\leq a(t)\leq a_1
\]
for every $t\geq T_0$. The variation-of-constants formula gives, for $t\geq T_0$,
\[
|y(t)|
\leq e^{-a_0(t-T_0)}|y(T_0)|
+C\int_{T_0}^t e^{-a_0(t-s)}e^{-2\beta s}\,\d s
\leq Ce^{-ct}
\]
for some $c>0$. Since $D(t)$ also decays exponentially,  the estimate in \eqref{eq:covariance-upper-nlessd}  holds for all
sufficiently large $t$. Increasing $C$ if necessary extends it to every $t \geq  0$ by continuity on compact time intervals.

\end{proof}

The preceding proposition gives exponential convergence without identifying its optimal rate. The eigenvalue-difference identity and the invariant isotropic dynamics allow us to sharpen this estimate.

\begin{corollary}[Sharp covariance decay rates]
\label{cor:sharp-covariance-rates}
Assume $1\leq n<\dm$, and define
\begin{equation}
\label{eq:alpha-perp-parallel}
\alpha_\perp
:=4c_n\binom{\dm-2}{n-1}\lambda_\ast^{n-1},
\qquad
\alpha_\parallel
:=4c_n n\binom{\dm-1}{n-1}\lambda_\ast^{n-1}
=\frac{2n\gamma}{\lambda_\ast}.
\end{equation}
The following assertions hold.
\begin{enumerate}
\item If $\lambda_i^0\neq\lambda_j^0$ for some distinct indices $i,j\in\{1,\ldots,\dm\}$, then
\begin{equation}
\label{eq:sharp-pairwise-rate}
|\lambda_i(t)-\lambda_j(t)|\asymp e^{-\alpha_\perp t},
\qquad
\lim_{t\to\infty}\frac1t
\log|\lambda_i(t)-\lambda_j(t)|=-\alpha_\perp.
\end{equation}
\item If $\Sigma_0$ is not a scalar multiple of $I_\dm$, then
\begin{equation}
\label{eq:sharp-anisotropic-covariance-rate}
\max_{1\leq i\leq\dm}|\lambda_i(t)-\lambda_\ast|
\asymp e^{-\alpha_\perp t},
\end{equation}
and therefore
\[
\lim_{t\to\infty}\frac1t
\log\max_{1\leq i\leq\dm}|\lambda_i(t)-\lambda_\ast|
=-\alpha_\perp.
\]
\item If $\Sigma_0=s_0I_\dm$ for some $s_0\geq 0$ with $s_0\neq\lambda_\ast$, then
\begin{equation}
\label{eq:sharp-isotropic-covariance-rate}
\max_{1\leq i\leq\dm}|\lambda_i(t)-\lambda_\ast|
=|s(t)-\lambda_\ast|
\asymp e^{-\alpha_\parallel t},
\end{equation}
where $\lambda_i(t)=s(t)$ for every $i\in\{1,\ldots,\dm\}$. If $\Sigma_0=\lambda_\ast I_\dm$, then $\Sigma_t=\Sigma_0$ for all $t\geq0$.
\end{enumerate}
Thus the slow anisotropic mode has rate $\alpha_\perp$, whereas the purely isotropic mode has rate $\alpha_\parallel$. In particular, $\alpha_\parallel\geq\alpha_\perp$, with equality when $n=1$.
\end{corollary}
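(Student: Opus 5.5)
We treat the three assertions in order, using the eigenvalue-difference identity \eqref{eq:eigenvalue-difference-diffusion} for the anisotropic mode and the scalar isotropic ODE for the isotropic mode.

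\textbf{Assertion (1).} By Proposition~\ref{prop:covariance-nlessd}, every eigenvalue converges exponentially to $\lambda_\ast$. Since $e_{n-1}(\hat\lambda_i,\hat\lambda_j)$ is a polynomial, it is Lipschitz on $[L,U]^{\dm}$, so
\[
\bigl|e_{n-1}(\hat\lambda_i(t),\hat\lambda_j(t))-\tbinom{\dm-2}{n-1}\lambda_\ast^{n-1}\bigr|\leq Ce^{-ct}
\]
for $t\geq t_0$. This difference is integrable on $[0,\infty)$. Hence
\[
4c_n\int_0^t e_{n-1}(\hat\lambda_i(s),\hat\lambda_j(s))\,\d s=\alpha_\perp t+K+o(1)
\]
for a finite constant $K$. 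Substituting into \eqref{eq:eigenvalue-difference-diffusion} with $\lambda_i^0\neq\lambda_j^0$ gives $|\lambda_i(t)-\lambda_j(t)|=|\lambda_i^0-\lambda_j^0|e^{-K+o(1)}e^{-\alpha_\perp t}$. This yields both the two-sided bound and the logarithmic limit.

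\textbf{Assertion (3).} If $\Sigma_0=s_0I_\dm$, then uniqueness for \eqref{eq:eigenvalue-long-time} and permutation symmetry force all eigenvalues to equal a common $s(t)$. This function solves the autonomous ODE $s'=F(s)$ with $F(r)=2\gamma-4c_n\binom{\dm-1}{n-1}r^n$, and $\lambda_\ast$ is a zero of $F$. The argument is the same as in Proposition~\ref{prop:covariance-nd}:
\begin{itemize}
\item $s-\lambda_\ast$ never changes sign.
\item The difference quotient $b(t)=-(F(s)-F(\lambda_\ast))/(s-\lambda_\ast)$ converges to $-F'(\lambda_\ast)=\alpha_\parallel$.
\item Since $F$ is $C^2$ near $\lambda_\ast$, we have $|b(t)-\alpha_\parallel|\leq C|s(t)-\lambda_\ast|$, and the right-hand side is integrable by exponential convergence.
\end{itemize}
Integrating the linear equation for $s-\lambda_\ast$ gives $|s-\lambda_\ast|\asymp e^{-\alpha_\parallel t}$. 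When $s_0=0$, we have $s(t)>0$ for $t>0$, so we start the argument at $t_0$. If $s_0=\lambda_\ast$, the solution is stationary. The identity $\alpha_\parallel=2n\gamma/\lambda_\ast$ follows from $2\gamma=4c_n\binom{\dm-1}{n-1}\lambda_\ast^n$. The comparison $\alpha_\parallel\geq\alpha_\perp$ reduces to $n\binom{\dm-1}{n-1}\geq\binom{\dm-2}{n-1}$, and both sides equal $1$ when $n=1$.

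\textbf{Assertion (2).} This is the main obstacle. For the lower bound, pick $i,j$ with $\lambda_i^0\neq\lambda_j^0$. Then
\[
\max_k|\lambda_k-\lambda_\ast|\geq\tfrac12|\lambda_i-\lambda_j|\gtrsim e^{-\alpha_\perp t}
\]
by (1). For the upper bound, we write $\lambda_k=\overline\lambda+\delta_k$. Each $|\delta_k|\leq\max_{i,j}|\lambda_i-\lambda_j|\lesssim e^{-\alpha_\perp t}$ by (1) applied to the finitely many pairs; pairs with equal initial eigenvalues contribute zero. It remains to control $y=\overline\lambda-\lambda_\ast$. From the proof of Proposition~\ref{prop:covariance-nlessd},
\[
y'=-a(t)y+R(t),\qquad |R(t)|\leq CD(t)^2\lesssim e^{-2\alpha_\perp t},
\]
where $a(t)\to\alpha_\parallel$ with $|a(t)-\alpha_\parallel|\lesssim|y|$, which is integrable. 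Variation of constants gives
\[
|y(t)|\lesssim e^{-\alpha_\parallel t}+\int_0^t e^{-\alpha_\parallel(t-s)}e^{-2\alpha_\perp s}\,\d s\lesssim e^{-\alpha_\parallel t}+e^{-2\alpha_\perp t}+te^{-\alpha_\parallel t}.
\]
Because $\alpha_\parallel\geq\alpha_\perp$, the first two terms are $O(e^{-\alpha_\perp t})$. The factor $t$ in the third term is the delicate point. If $\alpha_\parallel>\alpha_\perp$, it is harmless after shrinking the exponent slightly. If $\alpha_\parallel=\alpha_\perp$ (only possible when $n=1$), the resonant case $\alpha_\parallel=2\alpha_\perp$ cannot occur, so no logarithmic loss arises and the bound is clean. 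Combining these gives $\max_k|\lambda_k-\lambda_\ast|\lesssim e^{-\alpha_\perp t}$. The logarithmic limit then follows from the two-sided bound.
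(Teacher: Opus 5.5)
Your proposal is correct and follows the paper's proof essentially step for step. Assertion (1) uses the eigenvalue-difference identity with an integrable correction to the exponent. Assertion (2) combines the half-pairwise-gap lower bound with variation of constants for $y=\overline\lambda-\lambda_\ast$, with forcing $O(e^{-2\alpha_\perp t})$. Assertion (3) is the scalar difference-quotient argument.

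Your case split on the $te^{-\alpha_\parallel t}$ term is harmless but unnecessary. The bound $\int_0^t e^{-\alpha_\parallel(t-s)}e^{-2\alpha_\perp s}\,\d s\leq Ce^{-\alpha_\perp t}$ holds directly in every case, and when $\alpha_\parallel>\alpha_\perp$ the estimate $te^{-\alpha_\parallel t}\leq Ce^{-\alpha_\perp t}$ costs nothing in the exponent.
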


\begin{proof}
For $i\neq j$, equation \eqref{eq:eigenvalue-difference-diffusion} gives
\[
\lambda_i(t)-\lambda_j(t)
=(\lambda_i^0-\lambda_j^0)
\exp\left(
-4c_n\int_0^t
 e_{n-1}(\hat\lambda_i(s),\hat\lambda_j(s))\,\d s
\right).
\]
By Proposition~\ref{prop:covariance-nlessd}, every eigenvalue converges exponentially to $\lambda_\ast$. Hence
\[
e_{n-1}(\hat\lambda_i(t),\hat\lambda_j(t))
\longrightarrow
\binom{\dm-2}{n-1}\lambda_\ast^{n-1}
\]
exponentially. The difference from the limiting value is therefore integrable, and
\[
\int_0^t
e_{n-1}(\hat\lambda_i(s),\hat\lambda_j(s))\,\d s
=
\binom{\dm-2}{n-1}\lambda_\ast^{\,n-1}t+O(1)
\qquad\text{as }t\to\infty.
\]
Substitution into \eqref{eq:eigenvalue-difference-diffusion} gives, whenever $\lambda_i^0\neq\lambda_j^0$,
\[
|\lambda_i(t)-\lambda_j(t)|
\asymp
\exp\left(
-4c_n\binom{\dm-2}{n-1}\lambda_\ast^{\,n-1}t
\right),
\]
which proves \eqref{eq:sharp-pairwise-rate}.

For the remaining assertions, write
\[
F(r):=2\gamma-4c_n\binom{\dm-1}{n-1}r^n.
\]
Suppose next that $\Sigma_0$ is not isotropic. With
\[
D(t):=\max_i|\lambda_i(t)-\overline\lambda(t)|,
\qquad
y(t):=\overline\lambda(t)-\lambda_\ast,
\]
we have
\[
D(t)\leq\max_{i,j}|\lambda_i(t)-\lambda_j(t)|
\leq Ce^{-\alpha_\perp t}.
\]
Choosing a pair $i\neq j$ such that $\lambda_i^0\neq\lambda_j^0$, the lower bound in \eqref{eq:sharp-pairwise-rate} gives
\[
{\max_i|\lambda_i(t)-\lambda_\ast|
\geq\frac{1}{2}|\lambda_i(t)-\lambda_j(t)|
\geq ce^{-\alpha_\perp t}.
}\]

It remains to prove the matching upper bound. As in the proof of Proposition~\ref{prop:covariance-nlessd}, $y$ satisfies
\[
y'(t)=-b(t)y(t)+R(t),
\qquad
|R(t)|\leq CD(t)^2\leq Ce^{-2\alpha_\perp t},
\]
where, whenever $y(t)\neq0$,
\[
b(t):=-\frac{F(\overline\lambda(t))-F(\lambda_\ast)}{\overline\lambda(t)-\lambda_\ast}.
\]
At times when $y(t)=0$, we set $b(t):=-F'(\lambda_\ast)=\alpha_\parallel$. The $C^2$ regularity of $F$ near $\lambda_\ast$ and Proposition~\ref{prop:covariance-nlessd} give
\[
|b(t)-\alpha_\parallel|
\leq C|\overline\lambda(t)-\lambda_\ast|
\in L^1(0,\infty).
\]
The corresponding evolution factor can therefore be written as
\[
\exp\left(-\int_s^t b(\tau)\,\d\tau\right)
=
e^{-\alpha_\parallel(t-s)}
\exp\left(
-\int_s^t\bigl(b(\tau)-\alpha_\parallel\bigr)\,\d\tau
\right).
\]
The second factor is bounded above and below by positive constants, uniformly for $0\leq s\leq t$. Hence
\[
\exp\left(-\int_s^t b(\tau)\,\d\tau\right)
\asymp e^{-\alpha_\parallel(t-s)}.
\]
The two decay rates satisfy
\[
\frac{\alpha_\parallel}{\alpha_\perp}
=\frac{n(\dm-1)}{\dm-n}\geq1.
\]
Therefore, the variation-of-constants formula yields
\[
|y(t)|
\leq C e^{-\alpha_\parallel t}
+C\int_0^t e^{-\alpha_\parallel(t-s)}e^{-2\alpha_\perp s}\,\d s
\leq Ce^{-\alpha_\perp t}.
\]
Together with $\max_i|\lambda_i(t)-\lambda_\ast|\leq D(t)+|y(t)|$, this proves \eqref{eq:sharp-anisotropic-covariance-rate}.

Finally, if $\Sigma_0=s_0I_\dm$ {for some $s_0\geq 0$}, uniqueness of the eigenvalue system gives $\lambda_i(t)=s(t)$ for every $i$, where
\[
s'(t)=F(s(t))
=2\gamma-4c_n\binom{\dm-1}{n-1}s(t)^n.
\]
If $s_0\neq\lambda_\ast$, then $s(t)-\lambda_\ast$ never changes sign. Defining
\[
b_\parallel(t)
:=-\frac{F(s(t))-F(\lambda_\ast)}{s(t)-\lambda_\ast},
\]
we have $b_\parallel(t)\to-F'(\lambda_\ast)=\alpha_\parallel$. The already established exponential convergence of $s(t)-\lambda_\ast$ implies $b_\parallel-\alpha_\parallel\in L^1(0,\infty)$. Therefore,
\[
|s(t)-\lambda_\ast|
=|s(0)-\lambda_\ast|
\exp\left(-\int_0^t b_\parallel(\tau)\,\d\tau\right)
\asymp e^{-\alpha_\parallel t}.
\]
The stationary case follows from uniqueness of the scalar equation. 
\end{proof}

\begin{remark}[Geometric meaning of the covariance rates]
The symbols $\alpha_\parallel$ and $\alpha_\perp$ reflect the linearized
geometry at the isotropic equilibrium. A symmetric covariance perturbation
splits into a component parallel to the isotropic line
$\operatorname{span}\{I_\dm\}$ and a traceless component orthogonal to that
line with respect to the Frobenius inner product. The corresponding linearized
decay rates are $\alpha_\parallel$ and $\alpha_\perp$, respectively. An
isotropic initial covariance has no traceless component and remains on the
one-dimensional invariant family $\{sI_\dm:s\geq 0\}$. Its relaxation is therefore
governed only by $\alpha_\parallel$. For $n>1$, this rate is strictly larger
than $\alpha_\perp$, which explains the faster convergence in the isotropic
case.
\end{remark}

\subsection{Convergence of the full measure-valued solution}

We next transfer the covariance estimates to the full solution $\rho_t$. For the initial measure $\rho_0$, set $m_0:=m(\rho_0)$ and recall the diagonalization of $\Sigma_0$ fixed at the beginning of this section. The preceding covariance analysis already identifies the limiting eigenvalues. When $n=\dm$, let $\lambda_i^\infty:=\lambda_i^0+\overline\mu$ as in Proposition~\ref{prop:covariance-nd}, whereas for $1\leq n<\dm$ let $\lambda_i^\infty:=\lambda_\ast$, with $\lambda_\ast$ defined in \eqref{eq:isotropic-minimizer-scale}. Accordingly, we set
\[
\Sigma_\infty
:=P^\top\operatorname{diag}(\lambda_1^\infty,\ldots,\lambda_\dm^\infty)P,
\qquad
\rho_\infty:=\mathcal N(m_0,\Sigma_\infty).
\]
In particular, $\Sigma_\infty=\lambda_\ast I_\dm$ when $1\leq n<\dm$.

\begin{theorem}[Convergence to equilibrium and explicit exponential rates]
\label{thm:explicit-W2-decay}
Let $\rho_0\in\mathcal P_2(\bbr^\dm)$, and let $(\rho_t)_{t\geq0}$ be the unique weak solution given by Theorem~\ref{thm:global-wellposedness-diffusion}. Then
\[
W_2(\rho_t,\rho_\infty)\longrightarrow0
\qquad
\text{as }t\to\infty.
\]
The limiting measure $\rho_\infty$ is a global minimizer of the free energy
$\mathcal E$ with center of mass $m_0$, and hence an equilibrium by
Proposition~\ref{prop:minimizers-stationary}.

For $1\leq i\leq\dm$, set
\begin{equation}
\label{eq:limiting-drift-rates}
a_i^\infty
:=2c_n e_{n-1}(\hat\lambda_i^\infty)
=\frac{\gamma}{\lambda_i^\infty},
\qquad
\beta_*:=\min_{1\leq i\leq\dm}a_i^\infty
=\frac{\gamma}{\max_i\lambda_i^\infty}.
\end{equation}
If $\Sigma_0=\Sigma_\infty$, then there exists $C>0$ such that
\begin{equation}
\label{eq:rho-stationary-covariance-upper}
W_2(\rho_t,\rho_\infty)\leq Ce^{-\beta_*t}
\qquad
\text{for every }t\geq0.
\end{equation}

Assume now that $\Sigma_0\neq\Sigma_\infty$, and define the covariance decay exponent by
\begin{equation}
\label{eq:alpha-covariance-definition}
\alpha_{\mathrm{cov}}
:=\begin{cases}
\alpha_\dm,
& n=\dm,\\[1mm]
\alpha_\perp,
& 1\leq n<\dm\ \text{and }\Sigma_0\text{ is not a scalar multiple of }I_\dm,\\[1mm]
\alpha_\parallel,
& 1\leq n<\dm\ \text{and }\Sigma_0=s_0I_\dm
\text{ for some }s_0\neq\lambda_\ast,
\end{cases}
\end{equation}
where $\alpha_\dm$ is defined in \eqref{eq:alpha-d} and $\alpha_\perp$, $\alpha_\parallel$ are defined in \eqref{eq:alpha-perp-parallel}. Then there exist constants $c,C>0$ such that
\begin{equation}
\label{eq:rho-explicit-two-sided}
c e^{-\alpha_{\mathrm{cov}}t}
\leq W_2(\rho_t,\rho_\infty)
\leq C\left(e^{-\beta_*t}+e^{-\alpha_{\mathrm{cov}}t}\right)
\qquad
\text{for every }t\geq0.
\end{equation}
In particular, if $\alpha_{\mathrm{cov}}\leq\beta_*$, then
\begin{equation}
\label{eq:rho-sharp-log-rate}
W_2(\rho_t,\rho_\infty)\asymp e^{-\alpha_{\mathrm{cov}}t},\quad\text{and consequently}\quad
\lim_{t\to\infty}\frac1t\log W_2(\rho_t,\rho_\infty)
=-\alpha_{\mathrm{cov}}.
\end{equation}
\end{theorem}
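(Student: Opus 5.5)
The plan is to combine the two Wasserstein bounds of Proposition~\ref{prop:W2-bounds-diffusion}, with $\overline\Sigma=\Sigma_\infty$ (same eigenbasis $P$, eigenvalues $\lambda_i^\infty$), with the covariance rates of Proposition~\ref{prop:covariance-nd}, Proposition~\ref{prop:covariance-nlessd} and Corollary~\ref{cor:sharp-covariance-rates}.

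\emph{Identification of the limit.} $\rho_\infty$ is a global minimizer with center $m_0$. For $n<\dm$, $\Sigma_\infty=\lambda_\ast I_\dm$. For $n=\dm$, $\prod_i\lambda_i^\infty=\gamma\dm!/(2(\dm+1))$ by \eqref{eq:mu-equilibrium}. In both cases Section~\ref{sec:minimizers} and Proposition~\ref{prop:minimizers-stationary} apply. The identity $a_i^\infty=\gamma/\lambda_i^\infty$ is the stationarity relation $2c_nT_{n-1}(\Sigma_\infty)=\gamma\Sigma_\infty^{-1}$ derived in that proof.

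\emph{Upper bound.} I will work term by term in \eqref{eq:direct-W2-upper-diffusion}.

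For the transported term, write $a_i(t)=2c_ne_{n-1}(\hat\lambda_i(t))$, so that $r_i(t,0)=\exp(-\int_0^ta_i)$. Since $\lambda_i(t)\to\lambda_i^\infty$ exponentially in every case, $|a_i(t)-a_i^\infty|\leq Ce^{-ct}$ is integrable. This gives $r_i(t,0)\leq Ce^{-a_i^\infty t}\leq Ce^{-\beta_*t}$, so $\sum_i\lambda_i^0r_i(t,0)^2\leq Ce^{-2\beta_*t}$.

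For the noise term, use \eqref{eq:qi-covariance-diffusion}:
\[
|q_i(t)-\lambda_i^\infty|\leq|\lambda_i(t)-\lambda_i^\infty|+\lambda_i^0r_i(t,0)^2 .
\]
Because $\lambda_i^\infty>0$, we have $|\sqrt{q_i}-\sqrt{\lambda_i^\infty}|\leq|q_i-\lambda_i^\infty|/\sqrt{\lambda_i^\infty}$. The covariance rates give $|\lambda_i(t)-\lambda_i^\infty|\leq Ce^{-\alpha_{\mathrm{cov}}t}$ in each case of \eqref{eq:alpha-covariance-definition}: Proposition~\ref{prop:covariance-nd} when $n=\dm$, and Corollary~\ref{cor:sharp-covariance-rates} when $n<\dm$. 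Summing the two terms yields the upper half of \eqref{eq:rho-explicit-two-sided}. The same computation also covers the case $\Sigma_0=\Sigma_\infty$, where $\lambda_i(t)\equiv\lambda_i^\infty$. Then $a_i\equiv a_i^\infty$ exactly and $q_i-\lambda_i^\infty=-\lambda_i^0e^{-2a_i^\infty t}$, which gives \eqref{eq:rho-stationary-covariance-upper}. Convergence $W_2(\rho_t,\rho_\infty)\to0$ follows from either bound.

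\emph{Lower bound.} Apply the Gelbrich bound \eqref{eq:Gelbrich-diagonal-diffusion}:
\[
W_2(\rho_t,\rho_\infty)\geq\max_i\bigl|\sqrt{\lambda_i(t)}-\sqrt{\lambda_i^\infty}\bigr|\geq\frac{\max_i|\lambda_i(t)-\lambda_i^\infty|}{\sqrt{U}+\sqrt{\lambda_i^\infty}},
\]
where $U$ bounds the eigenvalues uniformly in time. Such a $U$ exists by Lemma~\ref{lem:uniform-bounds-synchronization}, or by monotonicity of $\mu(t)$ when $n=\dm$.

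The two-sided covariance estimates then give $\max_i|\lambda_i(t)-\lambda_i^\infty|\geq ce^{-\alpha_{\mathrm{cov}}t}$ for $t\geq t_0$:
\begin{itemize}
\item when $n=\dm$ and $\overline\mu\neq0$, from \eqref{eq:mu-two-sided-rate};
\item in the anisotropic case, from \eqref{eq:sharp-anisotropic-covariance-rate};
\item in the isotropic case, from \eqref{eq:sharp-isotropic-covariance-rate}.
\end{itemize}
Note that $\Sigma_0\neq\Sigma_\infty$ forces $\overline\mu\neq0$ when $n=\dm$.

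On $[0,t_0]$, the lower bound is extended by shrinking $c$. This requires $W_2(\rho_t,\rho_\infty)>0$ there. That holds because $\Sigma_t\neq\Sigma_\infty$ for all $t$: equality at some time would force $\Sigma\equiv\Sigma_\infty$ by uniqueness of the ODE, run backward from that time. Continuity of $W_2$ in $t$ then gives a positive minimum on $[0,t_0]$.

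Finally, \eqref{eq:rho-sharp-log-rate} is immediate when $\alpha_{\mathrm{cov}}\leq\beta_*$.

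\emph{Main obstacle.} The delicate step is the sharp transported factor $r_i(t,0)\lesssim e^{-a_i^\infty t}$. It needs integrability of $a_i-a_i^\infty$, and not merely convergence. This is where the exponential covariance convergence must be fed back into the transport part of the solution.
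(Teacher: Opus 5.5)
Your proposal is correct and follows essentially the same route as the paper. Both arguments use the direct coupling bound with $\overline\Sigma=\Sigma_\infty$, the identity $q_i=\lambda_i-\lambda_i^0r_i(t,0)^2$, and integrability of $a_i-a_i^\infty$ to get $r_i(t,0)\lesssim e^{-\beta_*t}$. The lower bound likewise combines the Gelbrich bound with the two-sided covariance rates and extends to small times via backward uniqueness of the covariance ODE at the stationary point $\Sigma_\infty$. The only cosmetic difference is that your estimate $|\sqrt{q_i}-\sqrt{\lambda_i^\infty}|\leq|q_i-\lambda_i^\infty|/\sqrt{\lambda_i^\infty}$ holds for every $t\geq0$, so you skip the paper's large-time-plus-continuity step in the upper bound.
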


\begin{proof}
For each $i$, set $a_i(t):=2c_n e_{n-1}(\hat\lambda_i(t))$. The stationary covariance relation
\[
4c_n\lambda_i^\infty e_{n-1}(\hat\lambda_i^\infty)=2\gamma
\]
therefore identifies the limiting drift coefficient as
\[
a_i^\infty
:=2c_n e_{n-1}(\widehat{\lambda_i^\infty})
=\frac{\gamma}{\lambda_i^\infty}.
\]
Since $\lambda_i(t)\to\lambda_i^\infty$, the polynomial dependence on the covariance eigenvalues then gives $a_i(t)\to a_i^\infty$.

We use the covariance estimates obtained in
Proposition~\ref{prop:covariance-nd} and
Corollary~\ref{cor:sharp-covariance-rates}.
In the stationary-covariance case $\Sigma_0=\Sigma_\infty$,
$\lambda_i(t)=\lambda_i^\infty$ for every $i$ and all $t\geq0$. In the nonstationary case $\Sigma_0\neq\Sigma_\infty$,
\begin{equation}
\label{eq:covariance-exact-rate-for-W2}
\max_i|\lambda_i(t)-\lambda_i^\infty|
\asymp e^{-\alpha_{\mathrm{cov}}t},
\end{equation}
and the polynomial dependence of $a_i$
on the covariance eigenvalues and
\eqref{eq:covariance-exact-rate-for-W2} give
\[
|a_i(t)-a_i^\infty|
\leq Ce^{-\alpha_{\mathrm{cov}}t}.
\]
In either case, $a_i-a_i^\infty\in L^1(0,\infty)$. Its time integral therefore contributes only a bounded term:
\[
\int_0^t a_i(s)\,\d s
=
a_i^\infty t
+\int_0^t\bigl(a_i(s)-a_i^\infty\bigr)\,\d s
=a_i^\infty t+O(1).
\]
Together with \eqref{eq:ri-diffusion}, this yields
\begin{equation}
\label{eq:ri-explicit-asymptotic}
r_i(t,0)
=
\exp\left(-\int_0^t a_i(s)\,\d s\right)
\asymp e^{-a_i^\infty t},
\qquad
r_i(t,0)\leq Ce^{-\beta_*t}.
\end{equation}
After enlarging $C$ if necessary, the upper bound holds for every
$t\geq0$.


We next prove the Wasserstein upper bounds. Assume first that
$\Sigma_0\neq\Sigma_\infty$. By
\eqref{eq:qi-covariance-diffusion},
\[
q_i(t)
=
\lambda_i(t)-\lambda_i^0r_i(t,0)^2.
\]
Combining this identity with
\eqref{eq:covariance-exact-rate-for-W2} and
\eqref{eq:ri-explicit-asymptotic} gives
\begin{equation}
\label{eq:qi-explicit-upper}
|q_i(t)-\lambda_i^\infty|
\leq
C\left(
e^{-\alpha_{\mathrm{cov}}t}
+
e^{-2\beta_*t}
\right).
\end{equation}
Since $\lambda_i^\infty>0$ and
$q_i(t)\to\lambda_i^\infty$, for all sufficiently large $t$,
\[
\left|\sqrt{q_i(t)}-\sqrt{\lambda_i^\infty}\right|
=
\frac{|q_i(t)-\lambda_i^\infty|}
{\sqrt{q_i(t)}+\sqrt{\lambda_i^\infty}}
\leq
C|q_i(t)-\lambda_i^\infty|.
\]
It follows from \eqref{eq:qi-explicit-upper} that
\[
\left|\sqrt{q_i(t)}-\sqrt{\lambda_i^\infty}\right|^2
\leq
C\left(
e^{-2\alpha_{\mathrm{cov}}t}
+
e^{-4\beta_*t}
\right).
\]
At the same time, \eqref{eq:ri-explicit-asymptotic} gives
\[
\sum_{i=1}^{\dm}
\lambda_i^0r_i(t,0)^2
\leq
Ce^{-2\beta_*t}.
\]
Substituting these estimates into
\eqref{eq:direct-W2-upper-diffusion}, with
$\overline\Sigma=\Sigma_\infty$, yields
\[
W_2^2(\rho_t,\rho_\infty)
\leq
C\left(
e^{-2\beta_*t}
+
e^{-2\alpha_{\mathrm{cov}}t}
\right)
\]
for all sufficiently large $t$. Taking square roots gives
\[
W_2(\rho_t,\rho_\infty)
\leq
C\left(
e^{-\beta_*t}
+
e^{-\alpha_{\mathrm{cov}}t}
\right).
\]
The continuity of
$t\mapsto W_2(\rho_t,\rho_\infty)$ allows us to enlarge $C$ on a
bounded time interval. The same estimate therefore holds for every
$t\geq0$.

Assume now that $\Sigma_0=\Sigma_\infty$. In this case,
\[
\lambda_i(t)=\lambda_i^\infty=\lambda_i^0,
\qquad
a_i(t)=a_i^\infty,
\]
and hence $r_i(t,0)=e^{-a_i^\infty t}$.
Equation \eqref{eq:qi-covariance-diffusion} then becomes
\[
q_i(t)
=
\lambda_i^\infty
\left(1-r_i(t,0)^2\right).
\]
Because $0\leq r_i(t,0)^2\leq1$,
\[
\begin{aligned}
\left|\sqrt{q_i(t)}-\sqrt{\lambda_i^\infty}\right|^2
&=
\lambda_i^\infty
\left(1-\sqrt{1-r_i(t,0)^2}\right)^2 \\
&\leq
\lambda_i^\infty r_i(t,0)^4.
\end{aligned}
\]
The direct coupling estimate
\eqref{eq:direct-W2-upper-diffusion} therefore gives
\[
\begin{aligned}
W_2^2(\rho_t,\rho_\infty)
&\leq
\sum_{i=1}^{\dm}
\lambda_i^\infty r_i(t,0)^2
+
\sum_{i=1}^{\dm}
\lambda_i^\infty r_i(t,0)^4 \\
&\leq
Ce^{-2\beta_*t}.
\end{aligned}
\]
Taking square roots proves
\eqref{eq:rho-stationary-covariance-upper} for every $t\geq0$, and these estimates also show that
$W_2(\rho_t,\rho_\infty)\to0$ as $t\to\infty$.

It remains to identify the limiting Gaussian. If $1\leq n<\dm$, then
$\Sigma_\infty=\lambda_\ast I_\dm$, which is the unique minimizing
covariance obtained in Section~\ref{sec:minimizers}. By contrast, if $n=\dm$, Proposition~\ref{prop:covariance-nd} gives
\[
\det\Sigma_\infty
=
\prod_{i=1}^{\dm}\lambda_i^\infty
=
\frac{\gamma\dm!}{2(\dm+1)},
\]
which is exactly the determinant condition characterizing the
full-dimensional Gaussian minimizers. Since the center of mass is
conserved, $\rho_\infty=\mathcal N(m_0,\Sigma_\infty)$ is a global
minimizer of $\mathcal E$ and hence an equilibrium by
Proposition~\ref{prop:minimizers-stationary}.

Finally, suppose that $\Sigma_0\neq\Sigma_\infty$. The Gelbrich lower
bound \eqref{eq:Gelbrich-diagonal-diffusion} gives
\[
W_2^2(\rho_t,\rho_\infty)
\geq
\sum_{i=1}^{\dm}
\left(
\sqrt{\lambda_i(t)}
-
\sqrt{\lambda_i^\infty}
\right)^2.
\]
For all sufficiently large $t$, the eigenvalues $\lambda_i(t)$ are
uniformly bounded above, and the positivity of $\lambda_i^\infty$ gives
\[
\left|
\sqrt{\lambda_i(t)}
-
\sqrt{\lambda_i^\infty}
\right|
=
\frac{
|\lambda_i(t)-\lambda_i^\infty|
}{
\sqrt{\lambda_i(t)}+\sqrt{\lambda_i^\infty}
}
\geq
c|\lambda_i(t)-\lambda_i^\infty|.
\]
Together with \eqref{eq:covariance-exact-rate-for-W2}, this yields
\[
W_2(\rho_t,\rho_\infty)
\geq
ce^{-\alpha_{\mathrm{cov}}t}
\]
for all sufficiently large $t$.
Since the covariance trajectory cannot reach
$\Sigma_\infty$ at a finite time unless it starts there, uniqueness
of the covariance ODE and continuity allow us to decrease $c$ on a
bounded time interval. The lower bound therefore holds for every
$t\geq0$, and this completes the proof of
\eqref{eq:rho-explicit-two-sided}.

If $\alpha_{\mathrm{cov}}\leq\beta_*$, the upper and lower bounds have
the same exponential order. Thus
\[
W_2(\rho_t,\rho_\infty)
\asymp
e^{-\alpha_{\mathrm{cov}}t},
\]
which in particular implies
\[
\lim_{t\to\infty}
\frac1t
\log W_2(\rho_t,\rho_\infty)
=
-\alpha_{\mathrm{cov}}.
\]
This proves \eqref{eq:rho-sharp-log-rate}.
\end{proof}

\begin{remark}[Interpretation of the two decay mechanisms]
The lower bound in \eqref{eq:rho-explicit-two-sided} requires $\Sigma_0\neq\Sigma_\infty$. If $\Sigma_0=\Sigma_\infty$, then the covariance is stationary and the moment-based Gelbrich lower bound vanishes identically. A non-Gaussian initial measure with the equilibrium covariance still satisfies the explicit upper estimate \eqref{eq:rho-stationary-covariance-upper}, but its precise lower decay rate cannot in general be detected from the first two moments alone.

For $1\leq n<\dm$, one has $\beta_*=\gamma/\lambda_\ast$ and
\[
\frac{\alpha_\perp}{\beta_*}
=\frac{2(\dm-n)}{\dm-1},
\qquad
\frac{\alpha_\parallel}{\beta_*}=2n.
\]
Thus, for non-isotropic initial covariance, the covariance rate is no larger than the rate associated with transport memory exactly when $\dm\leq2n-1$. In that regime, \eqref{eq:rho-sharp-log-rate} gives the exact $W_2$ decay exponent for every initial measure with non-isotropic covariance. For isotropic nonstationary covariance, one always has $\alpha_\parallel>\beta_*$. When $n=\dm$, equation \eqref{eq:alpha-d} also gives $\alpha_\dm>\beta_*$. In these cases, estimate \eqref{eq:rho-explicit-two-sided} leaves a gap between the two mechanisms unless additional information on the initial measure is available.

One such case is Gaussian initial data. If $\rho_0$ is Gaussian, then $\rho_t$ is Gaussian for every $t$ and the common eigenbasis gives
\[
W_2^2(\rho_t,\rho_\infty)
=\sum_{i=1}^\dm
\left(\sqrt{\lambda_i(t)}-\sqrt{\lambda_i^\infty}\right)^2.
\]
Hence, whenever $\Sigma_0\neq\Sigma_\infty$, the exact decay exponent is $\alpha_{\mathrm{cov}}$ regardless of the relation between $\alpha_{\mathrm{cov}}$ and $\beta_*$. If $\rho_0=\rho_\infty$, the solution is stationary and the Wasserstein distance is identically zero.
\end{remark}


With the equilibrium classification and the long-time estimates now established, we complete the proof of Theorem ~\ref{thm:main-longtime}.

\begin{proof}[Proof of Theorem~\ref{thm:main-longtime}]
Proposition~\ref{prop:minimizers-stationary} identifies the stationary states with the global minimizers and gives the two covariance families stated in the theorem. Propositions~\ref{prop:covariance-nd} and \ref{prop:covariance-nlessd} then determine the limiting covariance selected by the initial data. Finally, Theorem~\ref{thm:explicit-W2-decay} gives convergence in $W_2$ together with an exponential upper bound, which completes the proof.
\end{proof}

\section*{Declarations}

\noindent\textbf{Competing interests.} The authors declare that they have no competing interests.

\noindent\textbf{Data availability.} No datasets were generated or analyzed during the current study.

\noindent\textbf{Declaration of generative AI use.} During the preparation of this manuscript, the authors used ChatGPT  (GPT-5.6, OpenAI) solely for language refinement, including improvements to grammar, wording, and readability. The tool was not used to generate the mathematical results, proofs, or conclusions of the manuscript. The authors have reviewed and verified all AI-assisted edits and have checked the applicable terms of use. The authors take full responsibility for the originality, accuracy, and integrity of the manuscript.

\appendix

\section{Cutoff arguments for the moment identities}
\label{app:moment-cutoff}
\setcounter{equation}{0}

This appendix supplies the details omitted in the proof of
Proposition~\ref{prop:moment-equations-diffusion}. We repeatedly use the fact
that continuity of $t\mapsto\rho_t$ in $W_2$ implies continuity of the first
and second moments. See, for example, \cite[Theorem~6.9]{villani2009optimal}.

\begin{proof}[Detailed proof of Proposition~\ref{prop:moment-equations-diffusion}]
Since the map $t\mapsto\rho_t$ is continuous with respect to $W_2$, the second moments are uniformly bounded on $[0,T]$. More precisely,
\begin{equation}
\label{eq:uniform-second-moment}
\sup_{0\leq t\leq T}\int_{\bbr^\dm}|x|^2\,\d\rho_t(x)<\infty.
\end{equation}
The maps $t\mapsto m_t$ and $t\mapsto\Sigma_t$ are continuous because convergence in $W_2$ implies convergence of the first and second moments. Consequently, $A_t:=2c_nT_{n-1}(\Sigma_t)$ is bounded on $[0,T]$. In particular, there exists a constant $C_T>0$ such that
\begin{equation}
\label{eq:linear-growth-drift}
|v[\rho_t](x)|=|A_t(x-m_t)|\leq C_T(1+|x|)
\end{equation}
for every $x\in\bbr^\dm$ and $t\in[0,T]$.

We first derive the equation for the mean. Let $\eta\in C_c^\infty(\bbr^\dm)$ satisfy
\[
0\leq\eta\leq1,
\qquad
\eta(x)=1\quad\text{for }|x|\leq1,
\qquad
\eta(x)=0\quad\text{for }|x|\geq2.
\]
For $R>1$ and $j\in\{1,\ldots,\dm\}$, set
$\eta_R(x):=\eta(x/R)$ and $\varphi_{R,j}(x):=x_j\eta_R(x)$. Then
\[
\begin{aligned}
\nabla\varphi_{R,j}(x)
&=e_j\eta_R(x)+x_j\nabla\eta_R(x),\\
\Delta\varphi_{R,j}(x)
&=2\partial_j\eta_R(x)+x_j\Delta\eta_R(x).
\end{aligned}
\]
Since the derivatives of $\eta_R$ are supported in
$\{R\leq|x|\leq2R\}$ and
\[
|\nabla\eta_R(x)|\leq\frac{C}{R},
\qquad
|\Delta\eta_R(x)|\leq\frac{C}{R^2},
\]
we have
\[
|\nabla\varphi_{R,j}(x)|\leq C,
\qquad
|\Delta\varphi_{R,j}(x)|
\leq\frac{C}{R}\indc_{\{R\leq|x|\leq2R\}}.
\]
Using the test function
$\varphi(t,x)=\zeta(t)\varphi_{R,j}(x)$ with
$\zeta\in C_c^\infty(0,T)$ in the weak formulation, we obtain
\[
\begin{aligned}
\frac{\d}{\d t}\int_{\bbr^\dm}\varphi_{R,j}(x)\,\d\rho_t(x)
={}&\int_{\bbr^\dm}\nabla\varphi_{R,j}(x)\cdot v[\rho_t](x)\,\d\rho_t(x)
\\
&+\gamma\int_{\bbr^\dm}\Delta\varphi_{R,j}(x)\,\d\rho_t(x)
\end{aligned}
\]
in the sense of distributions on $(0,T)$.

As $R\to\infty$, we have
\[
\varphi_{R,j}(x)\longrightarrow x_j,
\qquad
\nabla\varphi_{R,j}(x)\longrightarrow e_j
\]
pointwise. Moreover,
\[
|\varphi_{R,j}(x)|\leq|x|,
\qquad
|\nabla\varphi_{R,j}(x)\cdot v[\rho_t](x)|
\leq C_T(1+|x|).
\]
The bounds \eqref{eq:uniform-second-moment} and \eqref{eq:linear-growth-drift}
allow us to pass to the limit in the terms containing
$\varphi_{R,j}$ and $\nabla\varphi_{R,j}$ by dominated convergence.
For the diffusion term, the preceding estimate for
$\Delta\varphi_{R,j}$ gives
\[
\begin{aligned}
&\left|
\int_0^T\zeta(t)
\int_{\bbr^\dm}\Delta\varphi_{R,j}(x)\,\d\rho_t(x)\,\d t
\right|
\\
&\qquad\leq
\frac{C}{R}\|\zeta\|_{L^\infty(0,T)}
\int_0^T\rho_t\bigl(\{R\leq|x|\leq2R\}\bigr)\,\d t
\\
&\qquad\leq
\frac{CT}{R}\|\zeta\|_{L^\infty(0,T)}
\longrightarrow0.
\end{aligned}
\]
We therefore obtain
\[
\frac{\d}{\d t}\int_{\bbr^\dm}x_j\,\d\rho_t(x)
=\int_{\bbr^\dm}e_j\cdot v[\rho_t](x)\,\d\rho_t(x).
\]
Since this holds for every $j$,
\[
\frac{\d}{\d t}m_t
=\int_{\bbr^\dm}v[\rho_t](x)\,\d\rho_t(x).
\]
Using the explicit form of the velocity field, we find
\[
\int_{\bbr^\dm}v[\rho_t](x)\,\d\rho_t(x)
=-A_t\int_{\bbr^\dm}(x-m_t)\,\d\rho_t(x)
=0.
\]
This proves $m_t=m_0$ for every $t\in[0,T]$. With the mean now fixed, the covariance can be written as
\[
\Sigma_t
=\int_{\bbr^\dm}(x-m_0)(x-m_0)^\top\,\d\rho_t(x).
\]
Fix $i,j\in\{1,\ldots,\dm\}$ and set
\[
\Psi_{ij}(x):=(x_i-m_{0,i})(x_j-m_{0,j}),
\qquad
\Psi_{ij}^R(x):=\Psi_{ij}(x)\eta_R(x).
\]
The untruncated function satisfies
\[
\nabla\Psi_{ij}(x)
=(x_j-m_{0,j})e_i+(x_i-m_{0,i})e_j,
\qquad
\Delta\Psi_{ij}(x)=2\delta_{ij}.
\]
The cutoff derivatives are
\[
\begin{aligned}
\nabla\Psi_{ij}^R
&=\eta_R\nabla\Psi_{ij}+\Psi_{ij}\nabla\eta_R,\\
\Delta\Psi_{ij}^R
&=\eta_R\Delta\Psi_{ij}
+2\nabla\Psi_{ij}\cdot\nabla\eta_R
+\Psi_{ij}\Delta\eta_R.
\end{aligned}
\]
On the annulus $\{R\leq|x|\leq2R\}$, the last two terms in
$\Delta\Psi_{ij}^R$ are bounded by
$C(1+|x|^2)/R^2+C(1+|x|)/R$. The drift cutoff produces the additional error
$\Psi_{ij}\nabla\eta_R\cdot v[\rho_t]$. Using
$|v[\rho_t](x)|\leq C_T(1+|x|)$ and $|x|/R\leq2$ on the annulus, we have
\[
\bigl|\Psi_{ij}(x)\nabla\eta_R(x)\cdot v[\rho_t](x)\bigr|
\leq C_T(1+|x|^2)\mathbf 1_{\{R\leq|x|\leq2R\}}.
\]
The diffusion cutoff errors admit the same type of integrable domination.
Together with \eqref{eq:uniform-second-moment}, this domination allows us to apply dominated convergence with respect to $\d t\,\d\rho_t$, and all cutoff errors vanish as $R\to\infty$.
Applying the weak formulation to $\zeta(t)\Psi_{ij}^R(x)$ and passing to the limit yields
\[
\frac{\d}{\d t}(\Sigma_t)_{ij}
=\int_{\bbr^\dm}
\left[
(x_j-m_{0,j})v_i[\rho_t](x)
+(x_i-m_{0,i})v_j[\rho_t](x)
\right]
\,\d\rho_t(x)
+2\gamma\delta_{ij}.
\]
In matrix form, this reads
\[
\frac{\d}{\d t}\Sigma_t
=\int_{\bbr^\dm}
\left[
(x-m_0)v[\rho_t](x)^\top
+v[\rho_t](x)(x-m_0)^\top
\right]
\,\d\rho_t(x)
+2\gamma I_\dm.
\]
Substituting $v[\rho_t](x)=-A_t(x-m_0)$ gives
\[
\frac{\d}{\d t}\Sigma_t
=-\Sigma_tA_t-A_t\Sigma_t+2\gamma I_\dm.
\]
Since $T_{n-1}(\Sigma_t)$ is a polynomial in $\Sigma_t$, it commutes with $\Sigma_t$. Substituting its definition therefore yields
\[
\frac{\d}{\d t}\Sigma_t
=-4c_n\Sigma_tT_{n-1}(\Sigma_t)+2\gamma I_\dm,
\]
where the right-hand side is continuous in time. Hence the distributional identity also yields the asserted absolute continuity of $\Sigma_t$ on $[0,T]$.
\end{proof}

\section{Duality for the linear Fokker--Planck equation}
\label{app:linear-duality}

We justify the duality step used in the proof of
Theorem~\ref{thm:global-wellposedness-diffusion}.

\begin{lemma}[Backward duality identity]
Let $A\in C([0,T];\mathbb R^{\dm\times\dm})$ be symmetric. For $0\leq s\leq r\leq T$, let $\Phi(r,s)$ solve
\[
\partial_r\Phi(r,s)=-A_r\Phi(r,s),
\qquad
\Phi(s,s)=I_\dm.
\]
We associate with this evolution family the covariance matrix
\[
Q(r,s):=2\gamma\int_s^r\Phi(r,\tau)\Phi(r,\tau)^\top\,\d\tau.
\]
Let $\mu\in C([0,T];(\mathcal P_2(\bbr^\dm),W_2))$ be a weak solution of
\[
\partial_t\mu_t
=\nabla\cdot\left(A_t(x-m_0)\mu_t\right)+\gamma\Delta\mu_t,
\qquad
\mu_{t=0}=\rho_0.
\]
Fix $t\in(0,T]$ and $\psi\in C_c^\infty(\bbr^\dm)$. Define
\[
u(s,x)
:=\int_{\bbr^\dm}
\psi\big(m_0+\Phi(t,s)(x-m_0)+z\big)
\,\d\mathcal N(0,Q(t,s))(z),
\qquad 0\leq s\leq t.
\]
The duality identity to be proved is
\[
\int_{\bbr^\dm}\psi(x)\,\d\mu_t(x)
=\int_{\bbr^\dm}u(0,x)\,\d\rho_0(x).
\]
\end{lemma}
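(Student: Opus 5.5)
The plan is the standard duality argument. I will verify that $u$ is a smooth classical solution of the backward Kolmogorov equation
\[
\partial_s u(s,x)-A_s(x-m_0)\cdot\nabla u(s,x)+\gamma\Delta u(s,x)=0,
\qquad u(t,\cdot)=\psi,
\]
and then test the weak formulation of $\mu$ against $u$. The test function $u$ is not compactly supported, so a time-cutoff and a space-cutoff approximation are needed.

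\textbf{Step 1 (the PDE for $u$).} Since $\psi\in C_c^\infty$, $u$ is smooth in $x$ and each $x$-derivative is bounded uniformly in $s\in[0,t]$. For example,
\[
\nabla u(s,x)=\Phi(t,s)^\top\int\nabla\psi(\cdots)\,\d\mathcal N(0,Q(t,s)),
\]
and $\Phi(t,s)$ is continuous, hence bounded, on $\{0\le s\le t\le T\}$. For the time derivative I will use $\partial_s\Phi(t,s)=\Phi(t,s)A_s$, which follows by differentiating $\Phi(t,s)\Phi(s,t_0)=\Phi(t,t_0)$. I will also use
\[
\partial_sQ(t,s)=-2\gamma\,\Phi(t,s)\Phi(t,s)^\top .
\]
Differentiating the Gaussian average in its covariance, $\partial_\Sigma\mathbb E\,\psi(y+Z)=\tfrac12\mathbb E\,\nabla^2\psi(y+Z)$, the chain rule gives three terms. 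The term $\Phi(t,s)A_s(x-m_0)\cdot\mathbb E\nabla\psi$ equals $A_s(x-m_0)\cdot\nabla u$. The term $-\gamma\operatorname{tr}\bigl(\Phi\Phi^\top\mathbb E\nabla^2\psi\bigr)$ equals $-\gamma\Delta u$. Together these give the backward equation above. An alternative is a one-line Itô argument with the process $Y$ started at time $s$ from $x$, but I prefer the direct computation. It also shows that $\partial_s u$ has at most linear growth in $x$, uniformly in $s$, and that $u$, $\nabla u$, $\nabla^2u$ are bounded and continuous on $[0,t]\times\bbr^\dm$.

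\textbf{Step 2 (testing).} Take $\zeta_\varepsilon(s)$ smooth, equal to $1$ on $[0,t-\varepsilon]$, vanishing near $t$, with $-\zeta_\varepsilon'$ an approximate delta at $t$. Take the spatial cutoff $\eta_R$ from Appendix~A. Insert $\varphi=\zeta_\varepsilon(s)\eta_R(x)u(s,x)$ into the weak formulation. First, the weak formulation is stated on $[0,T)$ with smooth compactly supported test functions, and $u$ is only $C^1$ in $s$; I will handle this by mollifying in $s$ or by a standard density argument. Second, by Step 1 the terms $\eta_R\bigl(\partial_su-A_s(x-m_0)\cdot\nabla u+\gamma\Delta u\bigr)$ cancel exactly. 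What remains are the cutoff error terms: $u\,\partial_s$-free terms containing $\nabla\eta_R$ and $\Delta\eta_R$, and the drift term $u\,A_s(x-m_0)\cdot\nabla\eta_R$. These are bounded by $C(1+|x|)R^{-1}\mathbf 1_{\{R\le|x|\le2R\}}$, and they vanish as $R\to\infty$ by the uniform second-moment bound \eqref{eq:uniform-second-moment} and dominated convergence. This leaves
\[
\int_0^t\zeta_\varepsilon'(s)\int u(s,x)\,\d\mu_s(x)\,\d s+\int u(0,x)\,\d\rho_0(x)=0 .
\]

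\textbf{Step 3 (limit $\varepsilon\to0$).} The map $s\mapsto\int u(s,\cdot)\,\d\mu_s$ is continuous on $[0,t]$. Indeed, $u$ is bounded and jointly continuous, $\mu_s$ is narrowly continuous (implied by $W_2$-continuity), and $u(s,\cdot)\to u(s_0,\cdot)$ locally uniformly. Hence the first term tends to $-\int u(t,\cdot)\,\d\mu_t=-\int\psi\,\d\mu_t$, which yields the identity.

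The main obstacle I expect is Step 2: justifying the use of the non-compactly-supported, merely $C^1$-in-time function $u$ as a test function. This is where the linear growth of the drift and of $\partial_su$ must be controlled by the second-moment bound. The computation in Step 1 is routine once $\partial_s\Phi(t,s)=\Phi(t,s)A_s$ is recorded.
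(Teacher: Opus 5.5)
Your proposal is correct and follows essentially the same route as the paper. You verify directly that $u$ solves $\partial_s u-A_s(x-m_0)\cdot\nabla u+\gamma\Delta u=0$, using $\partial_s\Phi(t,s)=\Phi(t,s)A_s$ and $\partial_sQ(t,s)=-2\gamma\Phi(t,s)\Phi(t,s)^\top$, then test the weak formulation with $\eta_R u$ (after a temporal cutoff and mollification) and control the annulus errors with the uniform second-moment bound. The only difference is bookkeeping: you make the temporal cutoff $\zeta_\varepsilon$ explicit and pass to $s=t$ via narrow continuity of $s\mapsto\mu_s$, which the paper compresses into a single sentence.
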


\begin{proof}
The function $u$ satisfies $u(t,x)=\psi(x)$. We verify directly that it
solves the backward Kolmogorov equation. For fixed $t$, set
\[
F_s:=\Phi(t,s),
\qquad
Q_s:=Q(t,s),
\qquad
y_s(x):=m_0+F_s(x-m_0).
\]
The evolution family and the definition of $Q(t,s)$ give
\[
\partial_sF_s=F_sA_s,
\qquad
\partial_sQ_s=-2\gamma F_sF_s^\top.
\]
For $0\leq s<t$, the matrix $Q_s$ is positive definite. We may therefore
differentiate the Gaussian expectation with respect to its mean and covariance.
The resulting identity extends to $s=t$ by continuity, and differentiation gives
\begin{align*}
\partial_su(s,x)
={}&F_sA_s(x-m_0)\cdot
\int_{\bbr^\dm}\nabla\psi(y_s(x)+z)\,\d\mathcal N(0,Q_s)(z)
\\
&-\gamma\operatorname{tr}\left(
F_sF_s^\top
\int_{\bbr^\dm}D^2\psi(y_s(x)+z)\,\d\mathcal N(0,Q_s)(z)
\right).
\end{align*}
The spatial derivatives are
\[
\begin{aligned}
\nabla u(s,x)
&=F_s^\top
\int_{\bbr^\dm}\nabla\psi(y_s(x)+z)\,\d\mathcal N(0,Q_s)(z),\\
\Delta u(s,x)
&=\operatorname{tr}\left(
F_sF_s^\top
\int_{\bbr^\dm}D^2\psi(y_s(x)+z)\,\d\mathcal N(0,Q_s)(z)
\right).
\end{aligned}
\]
Since $A_s$ is symmetric, the first term in $\partial_su$ is $A_s(x-m_0)\cdot\nabla u$. The backward equation therefore becomes
\[
\partial_su(s,x)
-A_s(x-m_0)\cdot\nabla u(s,x)
+\gamma\Delta u(s,x)=0.
\]
Since $\psi$ is smooth and compactly supported, $u$, $\nabla u$, and $D^2u$
are bounded on $[0,t]\times\bbr^\dm$.

The function $u$ need not be compactly supported. Let $\eta_R$ be the cutoff
used in Appendix~\ref{app:moment-cutoff} and set
\[
u_R(s,x):=u(s,x)\eta_R(x).
\]
Since $A$ is only continuous in time, $u_R$ need not be smooth in time. After a smooth temporal cutoff, time mollification gives admissible $C_c^\infty$ test functions, and the weak formulation passes to the limit. We therefore obtain
\[
\begin{aligned}
&\int_{\bbr^\dm}u_R(t,x)\,\d\mu_t(x)
-\int_{\bbr^\dm}u_R(0,x)\,\d\rho_0(x)
\\
&\qquad
=\int_0^t\int_{\bbr^\dm}
\left[
\partial_su_R
-A_s(x-m_0)\cdot\nabla u_R
+\gamma\Delta u_R
\right]
\,\d\mu_s(x)\,\d s.
\end{aligned}
\]
Using the backward equation, the integrand on the right reduces to
\[
-uA_s(x-m_0)\cdot\nabla\eta_R
+\gamma u\Delta\eta_R
+2\gamma\nabla u\cdot\nabla\eta_R.
\]
All these terms are supported in $\{R\leq|x|\leq2R\}$. On this annulus,
\[
|\nabla\eta_R|\leq\frac{C}{R},
\qquad
|\Delta\eta_R|\leq\frac{C}{R^2},
\qquad
|A_s(x-m_0)|\leq C_T(1+|x|).
\]
The boundedness of $u$ and $\nabla u$, together with
\[
\sup_{0\leq s\leq T}\int_{\bbr^\dm}|x|^2\,\d\mu_s(x)<\infty,
\]
shows that every cutoff error tends to zero as $R\to\infty$. Passing to the
limit gives
\[
\int_{\bbr^\dm}u(t,x)\,\d\mu_t(x)
=\int_{\bbr^\dm}u(0,x)\,\d\rho_0(x).
\]
Since $u(t,x)=\psi(x)$, the desired identity follows.
\end{proof}

\section{The Gelbrich covariance lower bound}
\label{app:gelbrich-bound}

The following inequality is due to Gelbrich \cite{gelbrich1990formula}. We include a short proof in the
form used in Proposition~\ref{prop:W2-bounds-diffusion}.

\begin{lemma}[Gelbrich-type lower bound]
Let $\mu,\nu\in\mathcal P_2(\bbr^\dm)$ have means $m_\mu,m_\nu$ and covariance
matrices $\Sigma_\mu,\Sigma_\nu$. Then
\[
\begin{aligned}
W_2^2(\mu,\nu)
\geq{}&|m_\mu-m_\nu|^2
+\operatorname{tr}\Sigma_\mu
+\operatorname{tr}\Sigma_\nu
-2\operatorname{tr}\left[
\left(\Sigma_\nu^{1/2}\Sigma_\mu\Sigma_\nu^{1/2}\right)^{1/2}
\right].
\end{aligned}
\]
If the means agree and the covariance matrices are diagonalized by the same
orthogonal matrix, with eigenvalues $\lambda_i$ and $\eta_i$, respectively,
then
\[
W_2^2(\mu,\nu)
\geq\sum_{i=1}^\dm(\sqrt{\lambda_i}-\sqrt{\eta_i})^2.
\]
\end{lemma}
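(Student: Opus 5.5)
The plan is a coupling argument. Take any coupling $\pi$ of $\mu$ and $\nu$, and let $(X,Y)\sim\pi$. Write $X=m_\mu+\tilde X$ and $Y=m_\nu+\tilde Y$, where $\tilde X$ and $\tilde Y$ are centered. Expanding the square, and using that the cross terms between the means and the centered parts vanish in expectation, gives
\[
\mathbb E|X-Y|^2=|m_\mu-m_\nu|^2+\operatorname{tr}\Sigma_\mu+\operatorname{tr}\Sigma_\nu-2\operatorname{tr}K,
\qquad
K:=\mathbb E[\tilde X\tilde Y^\top].
\]
The joint covariance matrix of $(\tilde X,\tilde Y)$ is positive semidefinite, with block form
\[
\begin{bmatrix}\Sigma_\mu & K\\ K^\top & \Sigma_\nu\end{bmatrix}\succeq0.
\]
The whole problem therefore reduces to the matrix inequality
\[
\sup\Bigl\{\operatorname{tr}K:\ \begin{bmatrix}\Sigma_\mu & K\\ K^\top&\Sigma_\nu\end{bmatrix}\succeq0\Bigr\}
\leq\operatorname{tr}\bigl[(\Sigma_\nu^{1/2}\Sigma_\mu\Sigma_\nu^{1/2})^{1/2}\bigr].
\]
Once this holds, taking the infimum over couplings gives the first claim.

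For the matrix inequality, I would first treat invertible $\Sigma_\mu$ and $\Sigma_\nu$, and recover the general case by replacing them with $\Sigma_\mu+\varepsilon I$ and $\Sigma_\nu+\varepsilon I$ (which keeps the block matrix positive semidefinite) and letting $\varepsilon\to0$ using continuity of the right-hand side.

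In the invertible case, positive semidefiniteness of the block matrix is equivalent to $K=\Sigma_\mu^{1/2}C\Sigma_\nu^{1/2}$ with a contraction $\|C\|_{\mathrm{op}}\leq1$. This follows from the Schur complement condition $\Sigma_\nu-K^\top\Sigma_\mu^{-1}K\succeq0$. Then $\operatorname{tr}K=\operatorname{tr}(C\,\Sigma_\nu^{1/2}\Sigma_\mu^{1/2})$. By von Neumann's trace inequality, this is at most the nuclear norm of $\Sigma_\nu^{1/2}\Sigma_\mu^{1/2}$. That nuclear norm is $\operatorname{tr}\bigl[(\Sigma_\nu^{1/2}\Sigma_\mu\Sigma_\nu^{1/2})^{1/2}\bigr]$, because $M^\top M$ with $M=\Sigma_\mu^{1/2}\Sigma_\nu^{1/2}$ equals $\Sigma_\nu^{1/2}\Sigma_\mu\Sigma_\nu^{1/2}$. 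I expect this step, the contraction factorization together with the trace/duality estimate, to be the only nonroutine part.

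For the diagonal specialization, the means agree, so the mean term drops. If $\Sigma_\mu=P^\top\operatorname{diag}(\lambda_i)P$ and $\Sigma_\nu=P^\top\operatorname{diag}(\eta_i)P$, the matrix $\Sigma_\nu^{1/2}\Sigma_\mu\Sigma_\nu^{1/2}$ equals $P^\top\operatorname{diag}(\lambda_i\eta_i)P$. Its square root therefore has trace $\sum_i\sqrt{\lambda_i\eta_i}$, and the bound becomes $\sum_i(\lambda_i+\eta_i-2\sqrt{\lambda_i\eta_i})=\sum_i(\sqrt{\lambda_i}-\sqrt{\eta_i})^2$, as claimed.
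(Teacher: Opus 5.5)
Your proposal is correct and follows essentially the same argument as the paper. Both use the coupling expansion, the Schur-complement factorization of the cross-covariance through a contraction (your complement with respect to $\Sigma_\mu$ instead of $\Sigma_\nu$ is immaterial), the operator/nuclear-norm duality (von Neumann), the $\varepsilon I$ regularization for singular covariances, and the same evaluation in the common eigenbasis for the diagonal case.
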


\begin{proof}
Let $\pi$ be any coupling of $\mu$ and $\nu$, and let $(X,Y)$ denote the
coordinate random variables. After centering, we obtain
\[
\mathbb E|X-Y|^2
=|m_\mu-m_\nu|^2
+\operatorname{tr}\Sigma_\mu
+\operatorname{tr}\Sigma_\nu
-2\operatorname{tr}C_\pi.
\]
Here the cross-covariance matrix is
\[
C_\pi:=\mathbb E[(X-m_\mu)(Y-m_\nu)^\top].
\]
Accordingly, the block covariance matrix
\[
\begin{pmatrix}
\Sigma_\mu&C_\pi\\
C_\pi^\top&\Sigma_\nu
\end{pmatrix}
\]
is positive semidefinite. If both covariance matrices are positive definite, the Schur complement with respect to $\Sigma_\nu$ gives
\[
\Sigma_\mu-C_\pi\Sigma_\nu^{-1}C_\pi^\top\geq0.
\]
Introduce the normalized cross-covariance
\[
R_\pi:=\Sigma_\mu^{-1/2}C_\pi\Sigma_\nu^{-1/2}.
\]
The Schur-complement inequality is then equivalent to $R_\pi R_\pi^\top\leq I_\dm$, and therefore $\|R_\pi\|_{\mathrm{op}}\leq1$. Consequently, $C_\pi=\Sigma_\mu^{1/2}R_\pi\Sigma_\nu^{1/2}$.
By cyclicity of the trace and the duality between the operator and nuclear norms,
\[
\begin{aligned}
\operatorname{tr}C_\pi
&=\operatorname{tr}\left(
R_\pi\Sigma_\nu^{1/2}\Sigma_\mu^{1/2}
\right)\\
&\leq
\|R_\pi\|_{\mathrm{op}}\,
\|\Sigma_\nu^{1/2}\Sigma_\mu^{1/2}\|_*\\
&\leq
\|\Sigma_\nu^{1/2}\Sigma_\mu^{1/2}\|_*,
\end{aligned}
\]
where $\|\cdot\|_*$ denotes the nuclear norm. Its singular-value representation gives
\[
\|\Sigma_\nu^{1/2}\Sigma_\mu^{1/2}\|_*
=
\operatorname{tr}\left[
\left(
\Sigma_\nu^{1/2}\Sigma_\mu\Sigma_\nu^{1/2}
\right)^{1/2}
\right].
\]
Combining these observations yields
\[
\operatorname{tr}C_\pi
\leq
\operatorname{tr}\left[
\left(
\Sigma_\nu^{1/2}\Sigma_\mu\Sigma_\nu^{1/2}
\right)^{1/2}
\right].
\]

If one of the covariance matrices is singular, then for every $\varepsilon>0$,
\[
\begin{pmatrix}
\Sigma_\mu+\varepsilon I_\dm&C_\pi\\
C_\pi^\top&\Sigma_\nu+\varepsilon I_\dm
\end{pmatrix}
=
\begin{pmatrix}
\Sigma_\mu&C_\pi\\
C_\pi^\top&\Sigma_\nu
\end{pmatrix}
+\varepsilon I_{2\dm}
\]
is positive definite. Applying the preceding argument to the diagonal blocks $\Sigma_\mu+\varepsilon I_\dm$ and $\Sigma_\nu+\varepsilon I_\dm$, and then letting $\varepsilon\downarrow0$, gives the same estimate by continuity of the matrix square root. Taking the infimum over all couplings proves the general inequality. The diagonal formula
is obtained by evaluating the matrix square root in the common eigenbasis.
\end{proof}

\bibliographystyle{amsplain}
\bibliography{lit_verified_final.bib}

\end{document}